\tikzset{hassenode/.style={shape=circle,fill=white, draw=black, scale=.4}}
\newcommand{\Cnt}{\mathbf{Cnt}}
\newcommand{\CC}{\mathbf{CC}}
\newcommand{\CCf}{\mathbb{CC}}
\newcommand{\BCC}{\mathbf{BCC}}
\newcommand{\GMC}{\mathbf{GMC}}
\newcommand{\PV}{\mathbf{PV}}
\newcommand{\CR}{\mathbf{CR}}
\newcommand{\PVRM}{\mathbf{PVRM}}
\newcommand{\PVRMf}{\mathbb{PVRM}}
\newcommand{\2}{\mathbf{2}}
\newcommand{\G}{\mathbf{G}}
\newcommand{\Ap}{\mathbf{Ap}}
\newcommand{\Com}{\mathbf{Com}}
\newcommand{\V}{\mathbf{V}}
\newcommand{\W}{\mathbf{W}}
\newcommand{\D}{\mathsf{D}}
\newcommand{\R}{\mathsf{R}}
\newcommand{\q}{\mathfrak{q}}
\newcommand{\smi}{\mathsf{smi}}
\newcommand{\mi}{\mathsf{mi}}
\newcommand{\fmi}{\mathsf{fmi}}
\newcommand{\sfmi}{\mathsf{sfmi}}
\newcommand{\sji}{\mathsf{sji}}
\newcommand{\ji}{\mathsf{ji}}
\newcommand{\fji}{\mathsf{fji}}
\newcommand{\sfji}{\mathsf{sfji}}
\newcommand{\id}{1}
\newcommand{\INF}{\mathbf{inf}}
\newcommand{\TOP}{\mathsf{T}}
\newcommand{\Z}{\mathbb{Z}}
\newcommand{\up}[1]{\textup{#1}}
\newcommand{\rapprox}[1]{\stackrel{\ref{#1}}{=}}
\newcommand{\feq}{=}
\newcommand{\rfeq}[1]{\stackrel{\ref{#1}}{=}}
\newcommand{\con}{\operatorname{con}}
\newcommand{\bp}{{\bf p}}
\newcommand{\bq}{{\bf q}}
\newcommand{\bu}{{\bf u}}
\newcommand{\bv}{{\bf v}}
\newcommand{\bw}{{\bf w}}
\newcommand{\bx}{{\bf x}}
\newcommand{\bz}{{\bf z}}
\theoremstyle{plain}
\newtheorem{theorem}{Theorem}[section]
\newtheorem{lemma}[theorem]{Lemma}
\newtheorem{fact}[theorem]{Fact}
\newtheorem{proposition}[theorem]{Proposition}
\newtheorem{corollary}[theorem]{Corollary}
\theoremstyle{definition}
\newtheorem{definition}[theorem]{Definition}
\newtheorem{example}[theorem]{Example}
\newtheorem{problem}[theorem]{Problem}
\newtheorem{caution}[theorem]{Caution}
\newtheorem{remark}[theorem]{Remark}
\newtheorem{principle}[theorem]{Principle}
\DeclareMathOperator{\Atoms}{\mathsf{Atoms}}
\DeclareMathOperator{\Excl}{\mathsf{Excl}}
\newcommand{\separator}{\noindent\line(1,0){100}}
\newcommand{\adjunction}[4]{
\begin{tikzpicture}
\matrix (m) [matrix of math nodes, row sep=4em,column sep=2em, text height=1.5ex, text depth=0.25ex,ampersand replacement=\&]
{ #1 \& \&  #3 \\};
\path[>=latex,->>]($ (m-1-3.west) + (0,3pt) $)  edge [bend right=15] node[above] {#2}  ($ (m-1-1.east) + (0,3.pt) $);
\path[>=latex,right hook->]  ($ (m-1-1.east)+ (0,-3pt) $) edge  node[below] {#4} ($(m-1-3.west)+ (0,-3pt) $)  ;
\end{tikzpicture}
}
\begin{document}

%\markboth{Authors' Names}
%{Instructions for Typing Manuscripts (Paper's Title)}

%%%%%%%%%%%%%%%%%%%%% Publisher's Area please ignore %%%%%%%%%%%%%%%
%
%\catchline{}{}{}{}{}
%
%%%%%%%%%%%%%%%%%%%%%%%%%%%%%%%%%%%%%%%%%%%%%%%%%%%%%%%%%%%%%%%%%%%%

\title{On the atoms of algebraic lattices arising in $q$-theory}

\author[A. Egri-Nagy]{Attila Egri-Nagy}
\address{Centre for Research in Mathematics, School of Computing and Mathematics, Western Sydney University, Locked Bag 1797, Penrith NSW 2751, Australia}
\email{attila@egri-nagy.hu}

\author[M. Jackson]{Marcel Jackson$^\MakeLowercase{a}$}
\address{Department of Mathematics and Statistics, La Trobe University, Victoria~3086, Australia}
\email{m.g.jackson@latrobe.edu.au}

\author[J. Rhodes]{John Rhodes$^\MakeLowercase{b}$}
\address{Department of Mathematics, University of California, Berkeley, 970 Evans Hall \#3840, Berkeley, CA~94720, USA}
\email{rhodes@math.berkeley.edu}
\email{blvdbastille@aol.com}

\author[B. Steinberg]{Benjamin Steinberg$^\MakeLowercase{c}$}
\address{Department of Mathematics, City College of New York, NAC~8/133, Convent Ave at 138th Street, New York, NY~10031, USA}
\email{bsteinberg@ccny.cuny.edu}
\thanks{$^a$Marcel Jackson was supported by ARC Discovery Project DP1094578 and Future Fellowship FT120100666}
\thanks{$^b$John Rhodes thanks the Simons Foundation Collaboration Grants for Mathematicians for travel grant \#313548}
\thanks{$^c$Benjamin Steinberg was supported in part by NSERC, a grant from the Simons Foundation (\#245268 to Benjamin Steinberg) and the Binational Science Foundation of Israel and the US (\#2012080) and by a PSC-CUNY grant. Some of this work was performed while Benjamin Steinberg was at Carleton University.}

\subjclass[2010]{Primary: 20M07, Secondary: 06B35%continuous lattices
}
\keywords{finite semigroup; q-theory; algebraic lattice; pseudovariety; continuous operator.}

\begin{abstract}
We determine many of the atoms of the algebraic lattices arising in $\mathfrak{q}$-theory of finite semigroups.
\end{abstract}

\maketitle
%\begin{history}
%\received{(Day Month Year)}
%\revised{(Day Month Year)}
%\comby{[editor]}
%\end{history}

\section{Introduction}

All undefined terminology is given in \cite[Chapter~2]{qtheor} with which we assume the reader is familiar.

One way to view the $\q$-theory of finite semigroups is by analogy with the real analysis theory of continuous and differentiable functions from $[0,1]$ to itself.
The analogy is given by replacing $[0,1]$ with the complete algebraic lattice $\PV$ of all pseudovarieties of finite semigroups, replacing continuous functions with $\Cnt(\PV)$, and replacing differentiable functions with $\GMC(\PV)$; see \cite[Chapter~2]{qtheor}.

The collections of relational morphisms $\in \bf CC$ ($\bf PVRM$) give ``coordinates" (closely related to the graph of the continuous function given by applying the $\q$~operator) which, on application of~$\q$, yields, $\CC\q = \Cnt(\PV)$ and $\PVRM\q = \GMC(\PV)$.

For any $\mathsf{X} \subseteq \Cnt(\PV)$, let $\mathsf{X}^+$ denote the members~$\alpha$ of~$\mathsf{X}$ such that $\alpha(\V) \supseteq \V$ for all $\V \in \PV$.
Similarly, let $\mathsf{X}^-$ denote the members~$\beta$ of~$\mathsf{X}$ such that $\beta(\V) \subseteq \V$ for all $\V \in \PV$.

Next, $\CC$, $\CC^+$, $\CC^-$, $\PVRM$, $\PVRM^+$, and $\PVRM^-$ are defined so that $\CC \q = \Cnt(\PV)$, $\CC^+ \q = \Cnt(\PV)^+$, and so on.

Now since $\Cnt(\PV)$, $\Cnt(\PV)^+$, $\Cnt(\PV)^-$, $\GMC(\PV)$, $\GMC(\PV)^+$, and $\GMC(\PV)^-$ are all complete algebraic lattices, a natural question to ask is \textit{what are their atoms}?
Also we ask the same question for the complete algebraic lattices $\CC$, $\CC^+$, $\CC^-$, $\PVRM$, $\PVRM^+$, $\PVRM^-$, etc. including some minor variations of these.

We make significant progress on answering these questions; see Figures~\ref{tab:results1} and~\ref{tab:results2}.

So what are the methods of proofs?
For those having no atoms we use the obvious Principle~\ref{principle:1.7}.
For others we use the many Galois connections stemming from $\q$-theory \cite[Chapter~2]{qtheor} and then apply Proposition~\ref{prop:mqadjunction}.  
In determining the atoms of $\GMC$ and $\GMC^-$ we need to know which of the well-known atoms of $\PV$ (see \cite[Table~7.1]{qtheor}) lift, are projective, or are very small; see Definition~\ref{def:lifts}.
We determine, for each atom of $\PV$, when each of these properties hold; see Theorems~\ref{thm:1.24} and~\ref{thm:132}.

A big surprise arose when the $\Atoms(\Cnt(\PV)^+)$ turned out to be in one-to-one correspondence with the compact $\smi$ elements of $\PV$, where the compact elements of $\PV$ are the pseudovarieties generated by
 a single finite semigroup $S$; see Section~\ref{sec:irreducibility} and Theorem~\ref{thm:CntPVplusatoms}, Fact~\ref{fact:compactsmi}, and Remark~\ref{remark:compactsmi} for definitions and elementary properties.
Then the question arises: are there any compact $\smi$ pseudovarieties?
We prove that an infinite number exist.
To do this we first identify some basic syntactic conditions on an equation that guarantee it defines a $\smi$ pseudovariety (Proposition~\ref{prop:133}).  While these are not in general compact (Propositions \ref{prop:balancedcase} and \ref{prop:unbalancedcase}) we find two infinite families that are; see Section \ref{sec:compact}.  The method in each case is to show that there is a semigroup $S$ in the pseudovariety with the property that any equation \emph{not} following from the defining ones can be found to fail on $S$.  This semigroup $S$ generates the pseudovariety.

We conclude the article with two main problems and some other associated unresolved questions relating to compact $\smi$ pseudovarieties.

%Motivation: A first thing to look at when studying algebraic lattices is to identify the atoms of the lattices, i.e.\ the covers of the bottom element.
% In the usual lattice of the positive integers (with lcm as join and gcd as meet) the atoms are the prime numbers. However, in general the situation is not this nice, e.g.\
%There are algebraic lattices with no atoms, as we will see. Here we study the atoms of the following algebraic lattices:  $\Cnt(\PV)^+$ the non-decreasing continuous self-maps of the pseudovariety $\PV$ of finite semigroups, $\GMC(\PV)^+$, $\PVRM^+$ and $\CC^+$...

\section{Preliminaries}
Here we give few essential definitions, but making the paper self-contained would render the paper unreasonably long. Any undefined terminology can be found in~\cite[Chapter~2]{qtheor}, which we suggest that the reader keeps handy.  We follow the convention there that homomorphisms are written on the right of their arguments, but continuous operators on a lattice are written on the left.  A mapping of complete lattices is said to be \textbf{sup} if it preserves all suprema and \textbf{inf} if it preserves all infinima.

\subsection{Algebraic lattices} An element of a lattice is \emph{compact} if whenever it is less than or equal to the join of
a collection of elements, then it is actually below the join of a finite
subcollection. A complete lattice is \emph{algebraic} if each element is a join of compact elements. The set of compact elements of an algebraic lattice $L$ is denoted by $K(L)$.  The principal ideal generated by $\ell\in L$  is denoted by $\ell^{\downarrow}$.  The bottom and top of a lattice will be denoted by $\mathsf B$ and $\mathsf T$, respectively.

\subsection{Relational morphisms}

Let $S$ and $T$ be semigroups then a \emph{relational morphism} $\varphi\colon S\rightarrow T$ is a function $\varphi\colon S\rightarrow 2^T$ such that $s\varphi\neq\emptyset$ and $s_1\varphi s_2\varphi\subseteq (s_1s_2)\varphi$ for all $s,s_1,s_2\in S$. Thus relational morphisms of semigroups are generalizations of semigroup homomorphisms: they are relations with morphic properties.

We denote by $\PV$ the algebraic lattice of pseudovarieties of finite semigroups and by $\Cnt(\PV)$ the monoid of all continuous self-maps of $\PV$.   A mapping $\alpha\colon L\to L$ on a lattice is \emph{continuous} if it is order preserving and commutes with directed joins. Note that $\Cnt(\PV)$ is an algebraic lattice with respect to the pointwise ordering where joins and finite meets are computed pointwise, but infinite meets are not pointwise!  The submonoid $\Cnt(\PV)^+$ consists of those continuous operators $\alpha$ satisfying $\mathbf V\leq \alpha(\mathbf V)$ for all pseudovarieties $\mathbf V$. See~\cite[Chapter~2.2]{qtheor}.

Denote by $\CC$ the algebraic lattice of all continuously closed classes of relational morphisms.  See~\cite[Definition~2.1.2]{qtheor} for the axiomatic definition of a continuously closed class.  The algebraic lattice of pseudovarieties of relational morphisms is denoted by $\PVRM$.  See~\cite[Definition~2.1.5]{qtheor} for the definition. The algebraic lattices $\CC^+$ and $\PVRM^+$ consist of those continuously closed classes, respectively pseudovarieties of relational morphisms, that contain all identity mappings.  See~\cite[Definitions~2.1.3 and~2.1.6]{qtheor}.

If $T$ is a finite semigroup, we denote by $(T)$ the pseudovariety generated by $T$.  Similarly, if $f$ is a relational morphism, then $(f)$ denotes the pseudovariety of relational morphisms generated by $f$.

\subsection{The $\q$-operator}

If $\mathsf R$ is a continuously closed class, then $\mathsf R\q$ is the continuous operator on $\PV$ given by $\mathsf R\q(\mathbf V)$ is the pseudovariety of all semigroups $S$ such that there is a relational morphism $f\colon S\to T$ with $f\in \mathsf R$ and $T\in \V$.
The operator $\q\colon \CC\to \Cnt(\PV)$ in surjective, order preserving and continuous.  It preserves finite infima and all joins.  Moreover, it has a section $M\colon \Cnt(\PV)\to \CC$ given by \[M(\alpha)=\{f\colon S\to T\mid S\in \alpha((T))\}.\]  One has that $M(\alpha)$ is the unique maximum element of $\CC$ mapping to $\alpha$ under $\q$.  See~\cite[Chapter~2.3]{qtheor} for details.  The mapping $\q$ takes $\PVRM$ to the collection $\GMC(\PV)$ of all continuous operators satisfying the generalized Malcev condition~\cite[Definition~2.3.21]{qtheor}. The mapping $\q\colon \PVRM\to \GMC(\PV)$ preserves all sups and infs and has sections $\max$ and $\min$ taking each operator in $\GMC$ to the unique maximum, respectively minimum, pseudovariety of relational morphisms giving rise to it.  See~\cite[Chapter~2.3.2]{qtheor} for details.

\subsection{Irreducibility} \label{sec:irreducibility}
The following notions are defined in~\cite[Chapter~6.1.2]{qtheor}.  An element $\ell$ in a lattice $L$ is \emph{meet irreducible} $\mi$ if $\ell\geq \bigwedge X$ implies $\ell\geq x$ for some $x\in X$.  It is \emph{strictly meet irreducible} if $\ell=\bigwedge X$ implies $\ell=x$ for some $x\in X$.  We write $\fmi$, respectively,  $\sfmi$ for the analogous properties when $X$ is constrained to be finite.  The dual notions for joins are denoted $\ji$, $\sji$, $\fji$ and $\sfji$.  So, for example, $\ell$ is $\ji$ if $\ell\leq \bigvee X$ implies that $\ell\leq x$ for some $x\in X$.  Note that in an algebraic lattice, a $\ji$ element must be compact and, in fact, the $\ji$ elements are precisely the $\fji$ compact elements.

\section{Atoms}
An \emph{atom} of a lattice $L$ is a  cover of the bottom $\mathsf B$, that is, a minimal element of $L\setminus \{\mathsf B\}$.
%Notation: $K(L)$ is the set of compact elements of an algebraic lattice $L$ and $K^{\downarrow}(\ell)=K(L)\cap\ell^{\downarrow}$ where

%\comm{Since the next proof is gone, maybe this notation is not needed any more.}

The following fact is well known and can be found as~\cite[Lemma~4.49]{McKenzie}.
\begin{fact}\label{fact:1.1}
If $L$ is an algebraic lattice and $\ell_1,\ell_2\in L$, $\ell_1\leq\ell_2$, then $[\ell_1,\ell_2]$ is an algebraic lattice with compact elements $(K(L)\cap\ell_2^\downarrow) \vee  \ell_1$.
\end{fact}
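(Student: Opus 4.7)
The plan is to establish three things in order: that $[\ell_1,\ell_2]$ is a complete lattice, that each element $k\vee\ell_1$ with $k\in K(L)\cap\ell_2^{\downarrow}$ is compact in the interval, and that every compact element of the interval arises this way. Algebraicity then falls out automatically.

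First I would observe that the interval is closed under arbitrary joins and meets computed in $L$: a join of elements $\geq\ell_1$ is $\geq\ell_1$, and a meet of elements $\leq\ell_2$ is $\leq\ell_2$; the opposite bounds are automatic since $\ell_1\leq\ell_2$. So $[\ell_1,\ell_2]$ inherits a complete lattice structure from $L$, and joins in $[\ell_1,\ell_2]$ agree with joins in $L$.

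Next, for $k\in K(L)\cap\ell_2^{\downarrow}$, set $m=k\vee\ell_1\in[\ell_1,\ell_2]$. If $m\leq\bigvee X$ for some $X\subseteq[\ell_1,\ell_2]$, then in particular $k\leq\bigvee X$; by compactness of $k$ in $L$ there is a finite $F\subseteq X$ with $k\leq\bigvee F$, and since every element of $F$ is $\geq\ell_1$ we conclude $m=k\vee\ell_1\leq\bigvee F$. Hence $m$ is compact in $[\ell_1,\ell_2]$.

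For the converse, take $m\in[\ell_1,\ell_2]$ compact in the interval. Since $L$ is algebraic, $m=\bigvee\{k\in K(L):k\leq m\}$, and joining $\ell_1$ to each term (which is permissible since $m\geq\ell_1$) gives
\[
m\ =\ \bigvee\bigl\{k\vee\ell_1:k\in K(L),\ k\leq m\bigr\}.
\]
Each $k$ in this expression lies in $\ell_2^{\downarrow}$, so the terms $k\vee\ell_1$ live in $[\ell_1,\ell_2]$. Applying compactness of $m$ in the interval yields finitely many $k_1,\dots,k_n\in K(L)\cap m^{\downarrow}\subseteq K(L)\cap\ell_2^{\downarrow}$ with $m\leq k_1\vee\cdots\vee k_n\vee\ell_1$; taking $k:=k_1\vee\cdots\vee k_n$ (compact, and $\leq\ell_2$) gives $m=k\vee\ell_1$ of the required form. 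The same formula displays every $m\in[\ell_1,\ell_2]$ as a join of compact elements of the interval, so $[\ell_1,\ell_2]$ is algebraic.

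The only mildly delicate step is the converse direction, where one must check that the finite subcover provided by compactness in $[\ell_1,\ell_2]$ actually produces a compact element of $L$ (not merely a compact element of the interval); this is immediate because $K(L)$ is closed under finite joins, but it is the one place where the hypothesis $k\leq\ell_2$ in the description is used essentially.
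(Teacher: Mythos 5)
Your proof is correct and complete: the interval is complete with (nonempty) joins computed as in $L$, each $k\vee\ell_1$ with $k\in K(L)\cap\ell_2^{\downarrow}$ is compact in $[\ell_1,\ell_2]$ via compactness of $k$ in $L$, and the converse plus algebraicity follow by writing $m\geq\ell_1$ as the join of $\{k\vee\ell_1 : k\in K(L),\ k\leq m\}$ and using that $K(L)$ is closed under finite joins. The paper gives no argument of its own here--it simply cites \cite[Lemma~4.49]{McKenzie}--and your write-up is precisely the standard proof of that lemma, so there is nothing to compare beyond noting that the only (harmless) subtlety you gloss over is that empty joins in the interval equal $\ell_1$ rather than the bottom of $L$.
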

%
%\begin{proof}
%%(see pages 429+)
%First let us show that $ c\in K^\downarrow(\ell_2)\vee  \ell_1$ is compact for  $[\ell_1,\ell_2]$. Clearly $c\in [\ell_1,\ell_2]$. %, since $\mathsf{B}$, the bottom of $L$, is in $K$ and thus it is in $K^\downarrow(\ell_2)$.
%Now assume $c\leq \bigvee_{a\in A}\ell_a$, $\ell_1\leq\ell_a\leq\ell_2$. Since $c$ is in $K(L)$, %$c\leq\ell_{a_1}\vee \cdots \vee \ell_{a_h}$ for some finite subset $a_1,\ldots, a_h$ of $A$. Thus $c$ is compact %for  $[\ell_1,\ell_2]$.
%
%Also clearly $K^\downarrow(\ell_2) \vee  \ell_1$ sup-generates  $[\ell_1,\ell_2]$ because $K$ sup-generates $L$ and %$\ell_1\in K^\downarrow(\ell_2)\vee \ell_1$ since $\BOTTOM$, the bottom lies in  $K^\downarrow(\ell_2)$.
%In detail if $\ell\in L$ is such that  $\ell_1\leq\ell\leq\ell_2$, then there exists a subset $K_0\subseteq K(L)$ %so $\bigvee K_0=\ell$, so since $\ell\leq\ell_2$, $K_0\subseteq K^\downarrow(\ell_2)$, so $\bigvee_{k_0\in %K_0}k_0\vee\ell_1=\ell$, since $\ell_1\leq\ell$ and thus $k_0\vee\ell_1\in  K^\downarrow(\ell_2) \vee  \ell_1$.
%
%Thus any compact $\ell$ of  $[\ell_1,\ell_2]$ is a finite join of  $K^\downarrow(\ell_2) \vee  \ell_1$, which is %closed under finite joins since $K(L)$ is closed under finite joins.
%\end{proof}

\begin{corollary}
%(using the notation of \ref{fact:1.1})
The compact elements of $[\mathsf{B},\ell_2]$ equal $K(L)\cap\ell_2^\downarrow$.
\end{corollary}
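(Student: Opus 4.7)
The plan is to apply Fact~\ref{fact:1.1} directly with the special choice $\ell_1 = \mathsf{B}$. Since $[\mathsf{B}, \ell_2]$ is an instance of the interval $[\ell_1, \ell_2]$ appearing in Fact~\ref{fact:1.1}, the fact gives an explicit description of the compact elements of $[\mathsf{B},\ell_2]$ as $(K(L)\cap\ell_2^{\downarrow}) \vee \mathsf{B}$, meaning the set of joins $k\vee \mathsf{B}$ for $k \in K(L)\cap \ell_2^{\downarrow}$.

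The only remaining observation is the trivial simplification $k \vee \mathsf{B} = k$ for every $k \in L$, since $\mathsf{B}$ is the bottom element. Thus the set $(K(L)\cap\ell_2^{\downarrow}) \vee \mathsf{B}$ coincides with $K(L)\cap\ell_2^{\downarrow}$, yielding the claimed equality.

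There is essentially no obstacle here: the corollary is a one-line specialization of Fact~\ref{fact:1.1}. The only minor point worth verifying is that the notation $(K(L)\cap\ell_2^{\downarrow}) \vee \ell_1$ in Fact~\ref{fact:1.1} is read elementwise (i.e.\ as $\{k\vee \ell_1 \mid k\in K(L)\cap\ell_2^{\downarrow}\}$), which is the natural reading consistent with the formula still making sense when $\ell_1 = \mathsf{B}$.
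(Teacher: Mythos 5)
Your proposal is correct and is exactly the intended argument: the paper treats this corollary as an immediate specialization of Fact~\ref{fact:1.1} with $\ell_1=\mathsf{B}$, and your observation that $k\vee\mathsf{B}=k$ (with the elementwise reading of the join) is all that is needed.
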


\begin{fact}% (using the notation of \ref{fact:1.1})
$\Atoms\big([\mathsf{B},\ell_2]\big)=\Atoms(L)\cap\ell_2^\downarrow$. Atoms are compact and $\sji$ in algebraic lattices.
\label{fact:1.3}
\end{fact}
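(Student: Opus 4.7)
The plan is to handle the two assertions separately, since they are essentially independent and both quite direct.

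For the equality $\Atoms([\mathsf{B},\ell_2])=\Atoms(L)\cap\ell_2^{\downarrow}$, the key observation is that the interval $[\mathsf{B},\ell_2]$ shares its bottom with $L$, so being a cover of $\mathsf{B}$ in $[\mathsf{B},\ell_2]$ is almost the same as being a cover of $\mathsf{B}$ in $L$. I would argue both inclusions by unwinding the definitions. For $\supseteq$, given $a\in\Atoms(L)$ with $a\leq\ell_2$, any $b\in[\mathsf{B},\ell_2]$ with $\mathsf{B}<b\leq a$ is, in particular, an element of $L$ with $\mathsf{B}<b\leq a$, forcing $b=a$ by atomicity in $L$. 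For $\subseteq$, given an atom $a$ of $[\mathsf{B},\ell_2]$, any $b\in L$ with $\mathsf{B}<b\leq a$ automatically lies in $[\mathsf{B},\ell_2]$ (since $b\leq a\leq \ell_2$), and so $b=a$ by atomicity in the interval. The $\leq\ell_2$ condition holds trivially.

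For the second assertion, that atoms in an algebraic lattice are compact and $\sji$, I would prove each property in isolation. To show $\sji$, suppose $a$ is an atom and $a=\bigvee X$. Since every $x\in X$ satisfies $x\leq a$ and $a$ is an atom, each $x\in X$ equals either $\mathsf{B}$ or $a$; if all equalled $\mathsf{B}$, then $\bigvee X=\mathsf{B}\neq a$, contradicting the hypothesis, so some $x\in X$ equals $a$. This argument needs no algebraicity and is entirely elementary. For compactness, I would use that in the algebraic lattice $L$ every element is a join of compact elements; write $a=\bigvee Y$ with $Y\subseteq K(L)$. Since $a\neq\mathsf{B}$, some $y\in Y$ is distinct from $\mathsf{B}$, and then $\mathsf{B}<y\leq a$ forces $y=a$ by the atom property, placing $a$ in $K(L)$.

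There is no genuine obstacle here: both statements follow by direct appeals to the definitions of atom, compactness, and $\sji$. The only point worth being careful about is remembering that algebraicity is used precisely to decompose an arbitrary element as a join of compact ones; without it, the compactness of atoms would not follow from the atom property alone.
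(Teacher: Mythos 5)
Your proposal is correct and follows essentially the same route as the paper: the interval equality is a direct unwinding of definitions (the paper simply calls it clear), and the $\sji$ claim is the paper's observation that only $\mathsf{B}$ lies strictly below an atom. The only cosmetic difference is that you deduce compactness directly from the atom property applied to a decomposition $a=\bigvee Y$ with $Y\subseteq K(L)$, whereas the paper routes through the general remark that $\sji$ elements of an algebraic lattice are compact; both use algebraicity in exactly the same way.
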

\begin{proof}
The first statement is clear.  In an algebraic lattice $L$, any $\sji$ element is compact as it is a join of compact elements.  Atoms are clearly $\sji$ because only the bottom is strictly below them.
\end{proof}

\begin{caution}
In an algebraic lattice $L$, $\Atoms(L)$ can be empty.
\end{caution}

\begin{corollary}
%(In the notation of \ref{fact:1.1})
 If $L$ has no atoms, then $[\mathsf{B},\ell_2]$ has no atoms.
\label{cor:intervalhasnoatoms}
\end{corollary}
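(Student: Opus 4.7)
The plan is to derive this directly from Fact~\ref{fact:1.3}, which was just established. That fact gives the identity
\[
\Atoms\bigl([\mathsf{B},\ell_2]\bigr) = \Atoms(L) \cap \ell_2^{\downarrow}.
\]
Under the hypothesis that $L$ has no atoms, the set $\Atoms(L)$ is empty, hence so is its intersection with the principal ideal $\ell_2^{\downarrow}$. Therefore $\Atoms([\mathsf{B},\ell_2]) = \emptyset$, which is precisely the conclusion.

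There is essentially no obstacle here; the corollary is a one-line consequence of the preceding fact, and the whole point of stating it separately seems to be to flag the contrapositive direction for later use (if we can exhibit an atom in some interval $[\mathsf B,\ell_2]$, the ambient algebraic lattice must also have atoms). The only subtlety worth double-checking is that the atoms of the interval $[\mathsf B,\ell_2]$ coincide with the atoms of $L$ that lie below $\ell_2$, rather than being a strictly larger set: since both lattices share the same bottom element $\mathsf B$, any atom of $[\mathsf B,\ell_2]$ is a cover of $\mathsf B$ in $L$ too, so no new atoms can appear by restricting. This is exactly the content of Fact~\ref{fact:1.3}.
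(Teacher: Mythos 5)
Your proof is correct and matches the paper's intent: the corollary is stated immediately after Fact~\ref{fact:1.3} with no separate argument, being exactly the observation that $\Atoms([\mathsf{B},\ell_2])=\Atoms(L)\cap\ell_2^{\downarrow}=\varnothing$ when $\Atoms(L)=\varnothing$. Nothing further is needed.
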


\begin{remark}
In an algebraic lattice $L$, the atoms of $[\ell_1,\mathsf{T}]$ are the covers of $\ell_1$ in $L$, so in general, they are unrelated to $\Atoms(L)$.
\end{remark}

\begin{principle}[No atoms for $L$, an algebraic lattice] \label{principle:1.7}
If each compact element $c\neq\mathsf{B}$ has a compact element other than $\mathsf B$ strictly below, then $\Atoms(L)=\varnothing$, and conversely, since the atoms are the compact covers.
\end{principle}

%(you can find a compact element below by $\delta(S,T)$ by making $T$ bigger p58)

%\begin{proof} Since the proof of Fact~\ref{fact:1.3} shows the atoms are the compact covers of $\mathsf{B}$, \eqref{principle:1.7} follows.
%\end{proof}

%\begin{remark} Now using Principle~\ref{principle:1.7} we can show that  $\Cnt(\PV)$ and $\CC$ have no atoms. The %proof for $\Cnt(\PV)$ can be found in \cite{qtheor} (Proposition 7.1.24, pp 470).% page 470, and Notes page 518, %2nd paragraph.
%\end{remark}
%\comm{The next proposition is the same as the previous remark. However I kept it for the correct numbering, for the time being.}

%\todo{Refer in the preliminary to where $\CC$, $\PVRM$ etc.\ are defined, table of q-theory notations with page numbers}

Principle~\ref{principle:1.7} was used in~\cite[Proposition 7.1.24]{qtheor} to prove the following proposition.
\begin{proposition} The algebraic lattice $\Cnt(\PV)$ has no atoms.
\label{prop:CntPV_has_no_atoms}
\end{proposition}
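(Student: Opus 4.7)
\medskip
\noindent\textbf{Proof proposal.} The plan is to apply Principle~\ref{principle:1.7}: given an arbitrary compact $\alpha\in\Cnt(\PV)$ with $\alpha\neq\mathsf B$, I will produce a compact element of $\Cnt(\PV)$ strictly below $\alpha$ and different from $\mathsf B$.

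First, I extract a ``step function'' compact operator below $\alpha$. Since $\alpha\neq\mathsf B$ in $\Cnt(\PV)$, monotonicity forces $\alpha(\mathsf T)\neq \mathsf B$ in $\PV$, so some nontrivial finite semigroup $S$ lies in $\alpha(\mathsf T)$. The top $\mathsf T$ of $\PV$ is the directed join of the compact pseudovarieties $(T)$ over finite semigroups $T$, so continuity of $\alpha$ places $S$ already in $\alpha(U)$ for some compact $U\in \PV$. For each compact $W\in\PV$ define $\gamma_W\colon \PV\to \PV$ by $\gamma_W(\V)=(S)$ when $W\leq \V$ and $\gamma_W(\V)=\mathsf B$ otherwise. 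Compactness of $W$ in $\PV$ yields continuity of $\gamma_W$ (a directed join lying above $W$ must already dominate $W$ in one term); compactness of $(S)$ in $\PV$, together with the fact that joins in $\Cnt(\PV)$ are pointwise, shows $\gamma_W$ is itself compact in $\Cnt(\PV)$. Monotonicity of $\alpha$ and $S\in \alpha(U)$ give $\gamma_U\leq \alpha$, while $\gamma_U(U)=(S)$ is nontrivial, so $\gamma_U\neq\mathsf B$.

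Second, I shrink this step function by enlarging the trigger. The top of $\PV$ is not itself compact (it contains cyclic groups of all prime orders and so is not generated by any single finite semigroup), so there exists a compact $U'\in\PV$ strictly above $U$. Then $\gamma_{U'}\leq \gamma_U\leq \alpha$ pointwise, and the first inequality is strict at $\V=U$, since $U'\not\leq U$ forces $\gamma_{U'}(U)=\mathsf B<(S)=\gamma_U(U)$. Finally $\gamma_{U'}(U')=(S)\neq \mathsf B$, so $\gamma_{U'}\neq \mathsf B$, and $\gamma_{U'}$ is the required compact element of $\Cnt(\PV)$ strictly below $\alpha$ and not equal to $\mathsf B$, completing the verification of Principle~\ref{principle:1.7}.

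The only step that requires real vigilance is the claim that each step function $\gamma_W$ is compact in $\Cnt(\PV)$; this relies essentially on the pointwise formula for joins in $\Cnt(\PV)$, a property which (as the preliminaries emphasise) fails for infinite meets, so one should not try to dualise the argument.
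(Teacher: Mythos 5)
Your proposal is correct and takes essentially the same approach as the paper, which simply invokes Principle~\ref{principle:1.7} (via \cite[Proposition~7.1.24]{qtheor}): your step operators $\gamma_W$ are precisely the $\delta(S,T)$ that the paper works with, and strictly shrinking a nonzero compact by enlarging the trigger pseudovariety is the same device the paper uses later (e.g.\ passing from $P(S,T)$ to $P(S,T_1)$ in the proof of Theorem~\ref{thm:CntPVplusatoms}). The details you supply — continuity and compactness of the step operators from pointwise joins, and strictness of $\gamma_{U'}<\gamma_U\leq\alpha$ — are all verified correctly.
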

%\begin{proof} see page 470 Proposition 7.1.24. of  q-book \cite{qtheor}.\end{proof}

As a consequence, we can prove that $\CC$ has no atoms.

\begin{proposition}
The algebraic lattice $\CC$ has no atoms.
\label{prop:CC_has_no_atoms}%Proposition 1.10
\end{proposition}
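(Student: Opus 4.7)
The plan is to apply Principle~\ref{principle:1.7}: I need to show that every compact $\mathsf{R}\in\CC$ with $\mathsf{R}>\mathsf{B}_\CC$ admits a compact element strictly between $\mathsf{B}_\CC$ and $\mathsf{R}$. The central tool is the Galois adjunction $\q\dashv M$, which is valid because $\q$ preserves all joins and $M$ delivers the maximum preimage of each $\alpha\in\Cnt(\PV)$, combined with Proposition~\ref{prop:CntPV_has_no_atoms} to transport the ``no atoms'' property back from $\Cnt(\PV)$ to $\CC$.

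The first step is to identify $M(\mathsf{B}_{\Cnt(\PV)})$. Applying the formula $M(\alpha)=\{f\colon S\to T\mid S\in\alpha((T))\}$ at the constant operator $\mathsf{B}_{\Cnt(\PV)}$ (whose value at every pseudovariety is the trivial pseudovariety) yields the class of all relational morphisms with trivial domain, which is the bottom $\mathsf{B}_\CC$ of $\CC$. Consequently the fiber of $\q$ over $\mathsf{B}_{\Cnt(\PV)}$ is $\{\mathsf{B}_\CC\}$, and by the Galois adjunction $\mathsf{R}>\mathsf{B}_\CC$ implies $\mathsf{R}\q>\mathsf{B}_{\Cnt(\PV)}$.

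Since $\Cnt(\PV)$ is algebraic with no atoms, Principle~\ref{principle:1.7} applied inside $\Cnt(\PV)$ furnishes a compact $\beta\in\Cnt(\PV)$ with $\mathsf{B}_{\Cnt(\PV)}<\beta<\mathsf{R}\q$. I then form $\mathsf{R}'':=\mathsf{R}\wedge M(\beta)$. Because $\q$ preserves finite infima, $\mathsf{R}''\q=\mathsf{R}\q\wedge M(\beta)\q=\mathsf{R}\q\wedge\beta=\beta$, placing $\mathsf{R}''$ strictly between $\mathsf{B}_\CC$ and $\mathsf{R}$. Algebraicity of $\CC$ then yields the representation $\mathsf{R}''=\bigvee(K(\CC)\cap(\mathsf{R}'')^\downarrow)$, and since $\mathsf{R}''>\mathsf{B}_\CC$ some compact $\mathsf{R}'\le\mathsf{R}''$ satisfies $\mathsf{R}'>\mathsf{B}_\CC$---the element required by Principle~\ref{principle:1.7}.

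The only non-formal ingredient is the identification $M(\mathsf{B}_{\Cnt(\PV)})=\mathsf{B}_\CC$, which requires verifying from the axioms of a continuously closed class that every relational morphism with trivial domain lies in every such class; this is the subtle step where semigroup-theoretic content enters. The rest of the argument is a purely formal exercise with the Galois adjunction $\q\dashv M$ and the algebraicity of $\CC$.
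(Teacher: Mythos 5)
Your proof is correct and is essentially the paper's argument: both rest on Proposition~\ref{prop:CntPV_has_no_atoms}, the fact that the fibre of $\q$ over the bottom of $\Cnt(\PV)$ is trivial, and the preservation of finite meets by $\q$ to produce an element strictly between $\mathsf B$ and a given $\mathsf R$; your use of $M(\beta)$ where the paper takes an arbitrary $\q$-preimage of an intermediate operator, and your detour through Principle~\ref{principle:1.7} and compact elements, are inessential variations. The one step you flag but do not verify---that the relational morphisms with trivial domain form the bottom of $\CC$, so the bottom of $\Cnt(\PV)$ has a unique preimage---is exactly the fact the paper imports from \cite[Page~121]{qtheor}, so nothing substantive is missing.
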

\begin{proof}
By~\cite[Theorem~2.3.9]{qtheor}, there is a surjective map $\mathfrak q\colon \CC\to \Cnt(\PV)$ preserving all sups and finite meets. The bottom of $\Cnt(\PV)$ is the constant map to the trivial pseudovariety. In~\cite[Page~121]{qtheor} it is shown that each constant map has a unique preimage under $\mathfrak q$, hence if $\mathsf R$ is not the bottom of $\CC$, then is does not map to the bottom of $\Cnt(\PV)$ under $\mathfrak q$. Since $\Cnt(\PV)$ has no atoms, we can find $\mathsf B\neq \alpha < R\mathfrak q$.  By surjectivity, there exists $\mathsf S$ with $\mathsf S\mathfrak q=\alpha$.  Since $\mathfrak q$ preserves finite infs, we obtain $(\mathsf R\cap \mathsf S)\mathfrak q=\alpha$ and so $\mathsf R\cap \mathsf S<\mathsf R$ and $\mathsf R\cap \mathsf S$ is not the bottom.  Thus $\CC$ has no atoms.
\end{proof}

The reader is referred to~\cite[Proposition~2.1.11]{qtheor} for the definition of $\CCf$ and~\cite[Page~75]{qtheor} for the definition of $\PVRMf$.

\begin{fact} If $\D$ denotes the class of all divisions, then
\begin{enumerate}
\item $\CCf(\id_\V\mid \V\in\PV)=\D$%\equiv \text{divisors}$
\item  $\PVRMf(\id_\V\mid \V\in\PV)=\D$%\equiv \text{divisors}$
\end{enumerate}
\end{fact}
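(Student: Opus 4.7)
The plan is to prove both (1) and (2) by double inclusion, exploiting the standard $q$-theoretic fact that $\D$ is itself a PVRM (and hence a CC). This lets me treat the two statements together: since $\CCf(X)\subseteq\PVRMf(X)\subseteq\D$ for the family $X=\{\id_\V\mid\V\in\PV\}$, once I have proved $\D\subseteq\CCf(X)$ the two generated classes are sandwiched between $\D$ and $\D$, so both equal $\D$.

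For the easy inclusion $\supseteq$, I would observe that every identity homomorphism $\id_S$ is trivially injective and hence a division, so $\{\id_\V\mid\V\in\PV\}\subseteq\D$; as $\D$ is both a CC and a PVRM, it contains the CC and PVRM generated by that family.

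For the hard inclusion, I would fix a division $f\colon S\to T$ and exhibit $f\in\CCf(\id_T)$ by using the canonical factorisation of $f$ through its graph. The graph $\#f\leq S\times T$ has projection $\pi_S\colon\#f\twoheadrightarrow S$, which is surjective because $sf\ne\varnothing$ for every $s\in S$, and projection $\pi_T\colon\#f\hookrightarrow T$, which is injective precisely because $f$ is a division. Hence $f=\pi_S^{-1}\pi_T$, realising $f$ as the result of first restricting $\id_T$ to the subsemigroup of $T$ cut out by $\pi_T$, and then pulling back along the surjective homomorphism $\pi_S$. The CC closure axioms of \cite[Definition~2.1.2]{qtheor} include exactly these operations (restriction to a subsemigroup of the codomain, pullback along a surjection on the domain, and composition), so $f\in\CCf(\id_T)\subseteq\CCf(\id_\V\mid\V\in\PV)$, which combined with the sandwich above gives both (1) and (2).

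The main obstacle I anticipate is not conceptual but bookkeeping: a careful one-to-one matching of the three moves in the factorisation $f=\pi_S^{-1}\pi_T$ with the specific closure axioms listed for CCs in \cite[Definition~2.1.2]{qtheor}. Once that matching is verified, no additional work is needed to upgrade the CC result to the PVRM result, because the sandwich $\CCf(X)\subseteq\PVRMf(X)\subseteq\D$ automatically forces both generated classes to coincide with $\D$.
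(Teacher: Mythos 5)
Your argument is correct, and its overall shape --- squeeze both generated classes between $\{\id_\V\mid\V\in\PV\}$ and $\D$ using that $\D$ is itself a CC and a PVRM, then prove $\D\subseteq\CCf(\id_\V\mid\V\in\PV)$ --- is the same skeleton as the paper's; in particular your sandwich disposes of (2) exactly as the paper does (there, by noting $\D$ is closed under Axiom (co-re) and citing \cite[Proposition~2.1.8(c)]{qtheor}). Where you genuinely differ is the hard inclusion for (1): the paper simply quotes the closed form $\CCf(X)=\{f\mid f\subseteq_s d_1(f_1\times\cdots\times f_n)d_2,\ d_1,d_2\in\D,\ f_i\in X\}$ of \cite[Proposition~2.1.14]{qtheor}, under which a division $f\colon S\to T$ lies in the generated class at once (take $d_1=f$, $f_1=\id_T$, $d_2=\id_T$), whereas you re-derive this from the axioms of \cite[Definition~2.1.2]{qtheor} via the graph factorisation $f=\pi_S^{-1}\pi_T$. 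Your route works, but two cautions about the ``matching with the axioms'' you defer: continuously closed classes are \emph{not} closed under composition of arbitrary members, nor under co-restriction (co-restriction is precisely the axiom separating PVRMs from CCs here), so the only closure you may invoke is strong division, i.e.\ pre- and post-composition with divisions combined with $\subseteq_s$ and finite products; your argument survives because $\pi_S^{-1}$ (inverse of a surjective homomorphism) and $\pi_T$ (an injective homomorphism, since $f$ is a division) are divisions, and because the identities $\id_{T'}$ and $\id_{\#f}$ are already among the generators, so no genuine ``restriction of $\id_T$'' is ever needed. Finally, observing that $f$ is itself a division lets you skip the graph entirely --- which is exactly what makes the paper's one-line appeal to Proposition~2.1.14 sufficient; your version buys independence from that proposition at the cost of the axiom-level bookkeeping you flag.
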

\begin{proof}
%(2.3) page 55 of q-book \cite{qtheor}.
One way of calculating $\CCf$ is
$$\CCf(X)=\{ f\mid f\subseteq_s d_1(f_1\times\cdots\times f_n)d_2,\quad d_1,d_2\in\D, f_i\in X\}$$
(see Proposition 2.1.14 in \cite{qtheor}) from which (1) follows.
 Also  $\CCf(\id_\V\mid \V\in\PV)$ is closed under Axiom (co-re),  as $\D$ is, so is a pseudovariety of relational morphisms in $\PVRM$ (see Proposition 2.1.8(c) in \cite{qtheor}) proving (2).
% by Proposition 2.1.8(c), page 52,
\end{proof}

\begin{definition}
% (see q-book \cite{qtheor} page 72),
\begin{eqnarray*}
\Cnt(\PV)^-&=&\{\alpha\in\Cnt(\PV)\mid \alpha\leq 1_{\PV}\}\\
\CC^-&=&\{\beta\in\CC\mid \beta\leq \D \}
\end{eqnarray*}
\label{def:CntPVminusCCminus}
\end{definition}

\begin{fact} $(\CC^-)\q=\Cnt(\PV)^-$
\label{fact:CCminusq_equals_CntPVminus} %1.13
\end{fact}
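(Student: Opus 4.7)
The plan is to prove the two inclusions separately, using the established properties of $\q$ from the preliminaries.

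\textbf{Easy direction} $(\CC^-)\q \subseteq \Cnt(\PV)^-$. First I would note that $\D\q = 1_{\PV}$, the identity operator on $\PV$: by the definition of $\q$, $\D\q(\V)$ is the class of finite semigroups $S$ admitting a division into some $T \in \V$, and since pseudovarieties are closed under division this is exactly $\V$. Now if $\beta \in \CC^-$, i.e.\ $\beta \leq \D$ in $\CC$, then order preservation of $\q$ yields $\beta\q \leq \D\q = 1_{\PV}$, so $\beta\q \in \Cnt(\PV)^-$.

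\textbf{Harder direction} $\Cnt(\PV)^- \subseteq (\CC^-)\q$. Fix $\alpha \in \Cnt(\PV)^-$. By surjectivity of $\q\colon \CC \to \Cnt(\PV)$ (recalled in the preliminaries), there exists some $\gamma \in \CC$ with $\gamma\q = \alpha$; for concreteness one may take $\gamma = M(\alpha)$. Set $\beta := \gamma \cap \D$. Then $\beta \leq \D$, so $\beta \in \CC^-$. Since $\q$ preserves finite meets,
\[
\beta\q = (\gamma \cap \D)\q = \gamma\q \wedge \D\q = \alpha \wedge 1_{\PV} = \alpha,
\]
where the last equality uses $\alpha \leq 1_{\PV}$. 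Thus $\alpha \in (\CC^-)\q$.

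\textbf{Main obstacle.} There is no serious obstacle: the argument is a standard Galois-theoretic manoeuvre that works whenever the ``upper'' lattice has a sup-preserving, meet-preserving, surjective map onto the ``lower'' one, together with a distinguished preimage of the identity operator (here $\D$). The only point requiring a brief justification is the identification $\D\q = 1_{\PV}$, which is immediate from the definition of $\q$ and the fact that pseudovarieties are closed under division. Essentially the same template reappears throughout the paper for variants like $\CC^+$, $\PVRM^\pm$, etc., so it is worth stating cleanly once.
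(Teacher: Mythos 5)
Your proof is correct and follows essentially the same route as the paper: both directions use $\D\q=\id_{\PV}$ with order preservation of $\q$ for the easy inclusion, and surjectivity of $\q$ plus preservation of finite meets applied to $\R\cap\D$ for the harder one. The only difference is that you spell out why $\D\q=\id_{\PV}$, which the paper takes as known.
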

%(see Exercise 2.4.3 of q-book \cite{qtheor}, page 81 for a related result)
\begin{proof}
Since $\D\q=\id_{\PV}$ and $\q$ is order preserving  $(\CC^-)\q\subseteq \Cnt(\PV)^-$. If $\alpha\in\Cnt(\PV)^-$, then since $\alpha\in\Cnt(\PV)$, there exists $\R\in \CC$, so $\R\q=\alpha$.
% $\D\q=\id_{\PV}$.
Thus since $\q$ preserves finite intersections (intersection equals meet)
% by Lemma 2.3.8, page 65 of q-book \cite{qtheor}
$(\R\cap \D)\q=\alpha$ and $\R\cap \D\in\CC^-$.
\end{proof}

\begin{corollary} %1.14
The lattices $\CC, \CC^-, \Cnt(\PV), \Cnt(\PV)^-$ have no atoms.
\label{cor:NoAtoms}
\end{corollary}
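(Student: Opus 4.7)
The plan is to derive the corollary from the two preceding propositions via the principal-ideal observation of Corollary~\ref{cor:intervalhasnoatoms}. The two new claims to check are that $\Cnt(\PV)^-$ and $\CC^-$ each have no atoms, since the statements for $\Cnt(\PV)$ and $\CC$ are already Propositions~\ref{prop:CntPV_has_no_atoms} and~\ref{prop:CC_has_no_atoms}.

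First I would unpack Definition~\ref{def:CntPVminusCCminus}: by the very definition, $\Cnt(\PV)^- = 1_{\PV}^{\downarrow}$ is the principal ideal in $\Cnt(\PV)$ generated by the identity operator, and $\CC^- = \D^{\downarrow}$ is the principal ideal in $\CC$ generated by the class of all divisions. Thus each is an interval of the form $[\mathsf{B}, \ell_2]$ inside an ambient algebraic lattice that we have already shown to be atomless.

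Second, I would apply Corollary~\ref{cor:intervalhasnoatoms} (which itself is the immediate consequence of Fact~\ref{fact:1.3}, giving $\Atoms([\mathsf{B}, \ell_2]) = \Atoms(L) \cap \ell_2^{\downarrow}$): with $L = \Cnt(\PV)$ and $\ell_2 = 1_{\PV}$ we conclude that $\Cnt(\PV)^-$ has no atoms, and with $L = \CC$ and $\ell_2 = \D$ we conclude that $\CC^-$ has no atoms. Combined with Propositions~\ref{prop:CntPV_has_no_atoms} and~\ref{prop:CC_has_no_atoms}, this establishes the corollary in all four cases.

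There is no real obstacle: the content is entirely in the two preceding propositions, and the passage to the minus variants is purely formal once one recognizes them as principal ideals. The only point worth verifying carefully is that the definitions in Definition~\ref{def:CntPVminusCCminus} are genuinely of the form $\alpha \leq \ell_2$ for a fixed top element $\ell_2$ of the interval (namely $1_{\PV}$ in $\Cnt(\PV)$ and $\D$ in $\CC$), which is immediate from the definitions.
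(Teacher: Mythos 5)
Your proposal is correct and follows exactly the paper's own route: the paper's proof simply cites Propositions~\ref{prop:CntPV_has_no_atoms} and~\ref{prop:CC_has_no_atoms}, Corollary~\ref{cor:intervalhasnoatoms}, and Definition~\ref{def:CntPVminusCCminus}, which is precisely your reduction of the minus variants to principal ideals $[\mathsf{B},1_{\PV}]$ and $[\mathsf{B},\D]$ inside atomless algebraic lattices. No gaps; your identification of the intervals is the only point needing care and you handled it correctly.
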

\begin{proof}
Use Proposition~\ref{prop:CntPV_has_no_atoms}, Proposition~\ref{prop:CC_has_no_atoms}, Corollary~\ref{cor:intervalhasnoatoms} together with Definition~\ref{def:CntPVminusCCminus}.
\end{proof}

%\section{}

In Section~\ref{sec:compact} we  describe two infinite families of compact $\smi$ elements of $\PV$.  The following theorem then shows that there are infinitely many atoms in $\Cnt(\PV)^+$.

\begin{theorem}\label{thm:CntPVplusatoms}
There is a bijection between $\Atoms(\Cnt(\PV)^+)$ and the compact $\smi$ elements in $\PV$.
\end{theorem}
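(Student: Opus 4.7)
The plan is to construct mutually inverse maps between compact $\smi$ elements of $\PV$ and atoms of $\Cnt(\PV)^+$. For the forward direction, given a compact $\smi$ element $\W\in \PV$ with unique upper cover $\W^+$, define
\[
\alpha_\W(\V)=\begin{cases} \V & \text{if } \W\not\subseteq\V,\\ \V\vee \W^+ & \text{if } \W\subseteq\V.\end{cases}
\]
Compactness of $\W$ is precisely what is needed for $\alpha_\W$ to preserve directed joins: for $\V=\bigvee \V_i$ (directed) with $\W\subseteq \V$, compactness gives $\W\subseteq \V_j$ for some $j$, so the definition's branch-switch propagates cofinally. Hence $\alpha_\W\in\Cnt(\PV)^+$ with $\alpha_\W(\W)=\W^+>\W$. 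To verify $\alpha_\W$ is an atom, take $1_{\PV}<\beta\leq \alpha_\W$. If a compact $\V$ satisfies $\beta(\V)>\V$, then $\alpha_\W(\V)>\V$ too, forcing $\W\subseteq \V$ and $\W^+\not\subseteq \V$; but the $\smi$ property means every pseudovariety strictly above $\W$ contains $\W^+$, so $\V=\W$. Combined with $\beta(\W)\leq \W^+$ and $\W^+$ covering $\W$, this forces $\beta(\W)=\W^+$. Writing an arbitrary $\V\supseteq\W$ as the directed join of all compacts below it then gives $\beta(\V)=\V\vee \W^+=\alpha_\W(\V)$, so $\beta=\alpha_\W$.

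For the reverse direction, fix an atom $\beta$. Continuity and $\beta>1_{\PV}$ provide a compact $\W$ with $\beta(\W)>\W$ (decompose a witnessing $\V^*$ as a directed join of compacts; not all of them can be $\beta$-fixed). The auxiliary operator
\[
\gamma_\W(\V)=\begin{cases} \V & \text{if } \W\not\subseteq\V,\\ \V\vee \beta(\W) & \text{if } \W\subseteq\V\end{cases}
\]
is continuous, lies in $\Cnt(\PV)^+$, satisfies $\gamma_\W\leq \beta$ by monotonicity of $\beta$, and strictly exceeds $1_{\PV}$, whence $\beta=\gamma_\W$ by atomicity. In particular, $\beta$ is the identity off $\{\V:\W\subseteq\V\}$; applying the same argument to any other compact $\W'$ with $\beta(\W')>\W'$ forces $\W\subseteq \W'$ and symmetrically $\W'\subseteq \W$, so the active compact $\W$ is unique. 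Moreover, $\beta(\W)$ must be an upper cover of $\W$: otherwise a strictly intermediate element would yield, via the same $\gamma$ construction, an operator strictly between $1_{\PV}$ and $\beta$, contradicting atomicity.

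The remaining step is to show $\W$ is $\smi$. For this we use the auxiliary fact that covers of compact elements in an algebraic lattice are themselves compact (expressing such a cover as a directed join of compacts, the covering condition forces the join to collapse to $\W\vee c$ for a single compact $c$). If $\W$ had a second cover $\bw\neq \beta(\W)$, then $\bw$ is compact and the operator $\beta'$ defined by $\beta'(\V)=\V\vee \beta(\W)$ for $\V\supseteq\bw$ and $\beta'(\V)=\V$ otherwise is continuous (by compactness of $\bw$); it satisfies $\beta'\leq \beta$ (equal to $\beta$ when $\V\supseteq\bw\supseteq \W$, and trivially bounded above elsewhere), strictly exceeds $1_{\PV}$ at $\V=\bw$ (since $\beta(\W)\not\subseteq \bw$ as distinct covers of $\W$ are incomparable), and is distinct from $\beta$ (because $\beta'(\W)=\W<\beta(\W)$), contradicting atomicity. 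The maps $\W\mapsto \alpha_\W$ and $\beta\mapsto \W$ are then mutually inverse, yielding the bijection. The principal obstacle is this last $\smi$ argument, which hinges on the auxiliary fact that covers of compacts in an algebraic lattice are compact.
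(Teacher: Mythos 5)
Most of your construction is sound: $\alpha_\W$ and $\gamma_\W$ are continuous precisely because $\W$ is compact, the argument that $\alpha_\W$ is an atom when $\W$ is compact $\smi$ is correct, and your reduction of an arbitrary atom $\beta$ to the form $\gamma_\W$ (identity off $\{\V:\W\subseteq\V\}$, with $\beta(\W)$ a cover of $\W$, and $\W$ the unique active compact) is a legitimate, more self-contained substitute for the paper's appeal to the classification of compact elements of $\Cnt(\PV)$ as finite joins of the operators $\delta(S,T)$. The genuine gap is in the very last step. You only rule out a \emph{second cover} of $\W$, but ``compact with a unique cover'' does not imply $\smi$, even in an algebraic lattice. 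Failure of $\smi$ means $\bigwedge\{\mathbf X\in\PV:\mathbf X>\W\}=\W$, which gives you some $\mathbf X>\W$ with $\beta(\W)\not\subseteq\mathbf X$; such an $\mathbf X$ need not lie above \emph{any} cover of $\W$, so no $\bw$ is produced and your operator $\beta'$ (whose test is $\V\supseteq\bw$) is simply unavailable. Indeed, in the algebraic lattice obtained from the chain $0<1$ by adjoining a descending chain $a_1>a_2>\cdots$ with meet $0$, each $a_i$ incomparable with $1$, and a common top, the bottom is compact with unique cover $1$ yet is not $\smi$; and you invoke no special property of $\PV$ that would exclude this phenomenon (the existence of covers of $\W$ inside every $\mathbf X>\W$ is not something you prove, and it is essentially what is at stake).

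The repair is short and stays inside your framework; it is exactly the paper's move with $P(S,T_1)$. Supposing $\W$ is not $\smi$, choose $\mathbf X>\W$ with $\beta(\W)\not\subseteq\mathbf X$, then a finite semigroup $U\in\mathbf X$ with $(U)\not\subseteq\W$, and set $\W_1=\W\vee(U)$, a compact pseudovariety with $\W<\W_1\subseteq\mathbf X$. Define $\beta'(\V)=\V\vee\beta(\W)$ if $\W_1\subseteq\V$ and $\beta'(\V)=\V$ otherwise. Continuity follows from compactness of $\W_1$; $\beta'\leq\beta$ because $\V\supseteq\W_1\supseteq\W$ gives $\beta(\V)\supseteq\V\vee\beta(\W)$; $\beta'>\id_{\PV}$ because $\beta'(\W_1)=\W_1\vee\beta(\W)>\W_1$ (as $\beta(\W)\not\subseteq\mathbf X\supseteq\W_1$); and $\beta'\neq\beta$ because $\beta'(\W)=\W<\beta(\W)$. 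This contradicts atomicity, so every pseudovariety strictly above $\W$ contains $\beta(\W)$, which is exactly the $\smi$ property (and identifies $\beta(\W)$ as the unique cover in the strong sense of Fact~\ref{fact:compactsmi}). With this fix your auxiliary lemma that covers of compacts are compact becomes unnecessary, and the two maps are mutually inverse as you claim.
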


Before the proof of Theorem~\ref{thm:CntPVplusatoms} we require the following fact.

\begin{fact}\label{fact:compactsmi} Let $T$ be a finite semigroup.
\begin{enumerate}
\item $(T)$ is  a compact $\smi$ of $\PV$, if and only if there exists a finite semigroup $S$ such that $(S)$ is the unique cover of $(T)$  in $\PV$ in the sense of $\bigwedge\{\W\in\PV\mid\W>(T)\}=(S)$.
\item $(T)$ is a compact $\smi$  with unique cover compact $(S)$ in $\PV$ if and only if,  for all $\W\in\PV$, $\W>(T)$ implies $S\in W$.
\end{enumerate}
\end{fact}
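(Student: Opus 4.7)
Both parts of the fact follow by directly unpacking the definitions, using two elementary facts about $\PV$: its compact elements are exactly the pseudovarieties of the form $(U)$ for some finite semigroup $U$, and finite joins of compact elements are compact. I would prove part (1) first and obtain part (2) essentially as a corollary.

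For the forward direction of (1), let $X := \{\W \in \PV \mid \W > (T)\}$; this is non-empty because $\PV$ itself is not compact, so $(T)$ is not the top. Since $(T)$ is $\smi$ and no member of $X$ equals $(T)$, the $\smi$ property rules out $\bigwedge X = (T)$, so $\bigwedge X > (T)$ and hence $\bigwedge X \in X$ is the minimum of $X$, i.e., the unique cover of $(T)$. To see this cover is compact, pick any semigroup $R \in \bigwedge X \setminus (T)$; then $(T) \vee (R) \leq \bigwedge X$ and $(T) \vee (R) > (T)$, so $(T) \vee (R) \in X$ forces $(T) \vee (R) = \bigwedge X$. As a finite join of compact elements, this is compact and therefore of the form $(S)$ for some finite semigroup $S$.

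For the converse of (1), if $(S)$ is a unique cover of $(T)$, then $(T)$ is already compact and the $\smi$ property is forced: given any meet representation $(T) = \bigwedge Y$ with no $\W \in Y$ equal to $(T)$, every $\W \in Y$ satisfies $\W > (T)$, hence $\W \geq (S)$, so $\bigwedge Y \geq (S) > (T)$, contradiction. Part (2) then drops out quickly. The forward direction uses the equality $(S) = \bigwedge\{\W \mid \W > (T)\}$ supplied by (1); for the converse, I would read ``unique cover compact $(S)$" as tacitly requiring $(S) > (T)$ (without which the right-hand side is vacuous whenever $S \in (T)$), and then $(S)$ lies both below and inside $\{\W \mid \W > (T)\}$, so equals its meet.

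\textbf{Main obstacle.} The only step carrying genuine content is the compactness of $\bigwedge X$ in the forward direction of (1), for which I use the sandwich argument with $(T) \vee (R)$. I would also flag the mild imprecision in (2): the right-hand side really needs to be accompanied by the standing assumption $(S) > (T)$ for the biconditional to be literally correct.
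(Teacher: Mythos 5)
Your argument is correct, but it is worth noting that the paper does not actually prove Fact~\ref{fact:compactsmi} at all: it simply cites \cite[Proposition~7.1.13]{qtheor}, so your proposal supplies a self-contained proof where the paper defers to the book. Your route is the natural lattice-theoretic one, and the one genuinely substantive step --- compactness of $\bigwedge\{\W\mid \W>(T)\}$ --- is handled correctly by the sandwich argument: choosing $R\in\bigwedge X\setminus (T)$ gives $(T)\vee(R)=(T\times R)$, which is squeezed to equal the meet and is compact because it is generated by the single finite semigroup $T\times R$. (This is really the specialization to $\PV$ of a general fact about algebraic lattices: a compact element is $\smi$ iff it has a unique cover, and that cover is then itself compact, obtained by joining with a compact element below the cover but not below the given element; the cited Proposition~7.1.13 is of this general nature, so your proof buys self-containment at the cost of re-proving a known general statement in the special case at hand.) Your flag on part (2) is also legitimate: as literally stated the right-hand side is vacuously true whenever $S\in (T)$, so the biconditional needs the standing assumption $(T)<(S)$; the paper itself confirms this reading in Remark~\ref{remark:compactsmi}, where the condition $(T)<(S)$ is stated explicitly. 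So there is no gap; the only discrepancy with the paper is that you prove what it cites.
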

\begin{proof} See~\cite[Proposition~7.1.13]{qtheor}.% page 466. (this may be misleading in q-book, see errata \comm{nothing in the errata regarding this}).
\end{proof}

\begin{remark}[Paraphrasing Fact~\ref{fact:compactsmi}] \label{remark:compactsmi}
Compact $\smi$ pseudovarieties exist if and only if there exist finite semigroups $T,S$ such that $(T)<(S)$ and $\W\in\PV$, $\W>(T)$ if and only if $S\in\W$.
\end{remark}

We now prove Theorem~\ref{thm:CntPVplusatoms}. In the following we denote  $\big(\delta(S,T)\vee \id_{\PV}\big)\in\Cnt(\PV)^+$ by $P(S,T)$ where we recall that if $S,T$ are finite semigroups, then \[\delta(S,T)(\mathbf V) = \begin{cases} (S), \text{if}\ T\in \mathbf V\\ \mathsf B, \text{else.}\end{cases}\]  Every compact element of $\Cnt(\PV)$ is a finite join of elements of the form $\delta(S,T)$ and hence any compact element of $\Cnt(\PV)^+$ must be a finite join of elements of the form $P(S,T)$ by Fact~\ref{fact:1.1}. See~\cite[Proposition~2.2.2]{qtheor} for details.  Consequently, an atom of $\Cnt(\PV)^+$ must be of the form $P(S,T)$ for some semigroups $S,T$.

\begin{proof}[Proof of Theorem~\ref{thm:CntPVplusatoms}]
Let $(T)$ be a compact $\smi$ with a unique cover $(S)$.  We prove that $P(S,T)$ is an atom of $\Cnt(\PV)^+$.
% (see page 58 of q-book \cite{qtheor} for the definition of $\delta$).
Then, by Fact~\ref{fact:compactsmi} restricted to compact $(S_1)$, we have
\[P(S,T)((S_1)) = \begin{cases} (S), & \text{if}\ (S_1)=(T)\\ (S_1), & \text{else}\end{cases}\]
%\comm{In the box:}
because if $T\notin (S_1)$, then $(S_1)\mapsto (S_1)$. If $T\in (S_1),\ (T)<(S_1)$, then $(S_1)\mapsto (S_1)\vee (S)=(S_1)$ by \eqref{fact:compactsmi}.
Clearly, this is an atom of $\Cnt(\PV)^+$, (since $\id_\PV\leq\alpha<P(S,T)$ implies $\alpha=\id_\PV$).% by Proposition 2.2.2. page 58 q-book \cite{qtheor}.

Next suppose that $\alpha$ is an atom of $\Cnt(\PV)^+$.  We already observed that $\alpha=P(S,T)$ for some finite semigroups $S,T$.
Clearly $P(S,T)\neq \id_\PV$ if and only if $(S)\nleq (T)$ and so we must have
\begin{align*}
(T)&\mapsto(T)\vee(S)>(T)\\
(T)\in\W&\mapsto\W\vee(S)\\
(T)\notin\W&\mapsto\W
\end{align*}
We claim that $(T)$ is a compact $\smi$ with unique cover $(S)$.  Assume otherwise.
Choose a finite semigroup $T_1$ so $(T)<(T_1)$ and $S\notin(T_1)$  (cf.~Fact~\ref{fact:compactsmi}).
Then $P(S,T_1)< P(S,T)$ ($\leq$ is clear and $(T)\mapsto(T)$ in the first case, $(T)\mapsto(S)\vee(T)\neq(T)$ in the second case).
Thus $a$ is an atom of $\Cnt(\PV)^+$ if and only if $a=P(S,T)$ with $(T)$ a compact $\smi$ in $\PV$ with unique cover $(S)$.  This establishes the bijection between atoms of $\Cnt(\PV)^+$ and compact $\smi$s.
\end{proof}

%\separator

\begin{definition}\label{def:extendingQBook}
Let $\Cnt(\PV)^{-}$ consist of those operators $\alpha$ with $\alpha(\mathbf V)\leq \mathbf V$ and put $\GMC^-(\PV)=\GMC(\PV)\cap \Cnt(\PV)^-$.
Notice that by~\cite[Corollary~3.5.22]{qtheor} $\D=\min(\id_\PV)$ but $\D\neq \max(\id_{\PV})$ by~\cite[Example~2.4.1]{qtheor}, so this motivates the following new extended definition:
$\PVRM^{(+)}=[\max(\id_\PV),\TOP]<\PVRM^+$ (where $\PVRM^+=[\D,\PVRM]$).

Recall that \[\max(\id_\PV)=\{f\colon S\rightarrow T,\ \text{a relational morphism}\mid W\leq T, Wf^{-1}\in(W)\}.\] See~\cite[Proposition~2.3.32]{qtheor}. See, for example \eqref{eq:22_4} for why we define $\PVRM^{(+)}$.  Define
$\PVRM^-=[\id,\D]$.

Similarly, we define $\CC^+=[\D,\mathsf T]$, $\CC^{(+)}=[M(\id_\PV),\TOP]$, $\CC^-=[\mathsf B,\D]$ where these intervals are in the lattice $\CC$.  Similar definitions are used for $\BCC^\epsilon$, $\epsilon\in\{+, (+),-\}$, such as $\BCC\cap\CC^\epsilon$. The reader is referred to~\cite[Section~2.3.3]{qtheor} for $\BCC$ (the lattice of Birkhoff continuously closed classes) and the following facts.  It turns out that $M\colon \Cnt(\PV)\to \CC$ takes values in $\BCC$ and that each continuous operator $\alpha\in \Cnt(\PV)$ is the image of a unique minimum Birkhoff continuous closed class $m(\alpha)$.
We recall that \[M(\id_\PV)=\{f\colon S\rightarrow T\mid S\in (T)\}\] by~\cite[Equation~(2.15), page~63]{qtheor}.
\end{definition}

The following proposition will be useful in computing atoms.

\begin{proposition}
\label{prop:mqadjunction} % 1.20 in qscans
Let $L_1$, $L_2$ be complete lattices.  The following hypothesis is denoted Hypothesis~\eqref{prop:mqadjunction}:
\begin{enumerate}
\item there is an adjunction
\[\adjunction{L_1}{$q$}{L_2}{$m$}\]
that is, $m$ is injective and $\mathbf{sup}$, $q$ is $\mathbf{inf}$ and onto;
\item  $B_2q^{-1}=B_1$ where $B_i$ is the bottom of $L_i$.
\end{enumerate}
Under Hypothesis~\eqref{prop:mqadjunction}, one has the conclusion:
\begin{itemize}
\item [(a)] $\Atoms(L_1)m=\Atoms(L_2)$;
\item [(b)]  $\Atoms(L_1)=\Atoms(L_2)q$
\end{itemize}
\end{proposition}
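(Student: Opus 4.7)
The plan is to extract the standard consequences of the adjunction and then chase each atom through $m$ and $q$. With $m$ sup-preserving and $q$ inf-preserving, the two maps form a monotone Galois connection in the usual way, furnishing a unit $\ell\le qm(\ell)$ and counit $mq(\ell')\le \ell'$; because $m$ is additionally injective, the unit collapses to the equality $qm=\id$. Sup-preservation of $m$ also gives $m(B_1)=B_2$, while hypothesis~(2) supplies both $q(B_2)=B_1$ and the crucial clause that nothing else lies in the fibre over the bottom. I would begin the proof by writing these four facts out explicitly, since the entire argument rests on them.

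For (a), take $a\in\Atoms(L_1)$. That $m(a)\ne B_2$ is immediate from injectivity of $m$ together with $m(B_1)=B_2$. To rule out strict intermediaries, suppose $B_2<y<m(a)$ and apply $q$: this gives $B_1=q(B_2)\le q(y)\le qm(a)=a$, so atom-ness of $a$ forces $q(y)\in\{B_1,a\}$. The case $q(y)=B_1$ is killed by hypothesis~(2), which forces $y=B_2$, contradicting $y>B_2$. The case $q(y)=a$ is killed by the counit: $m(a)=mq(y)\le y$, contradicting $y<m(a)$. Hence $m(a)$ is an atom of $L_2$.

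Part (b) is the mirror image: for $b\in\Atoms(L_2)$, the preimage clause of hypothesis~(2) keeps $q(b)\ne B_1$; any $B_1<z<q(b)$ would yield, after applying $m$ and using the counit, $B_2\le m(z)\le b$, and the subcases $m(z)=B_2$ and $m(z)=b$ are each closed out by applying $q$ (bringing $z$ back via $qm=\id$) to obtain $z=B_1$ and $z=q(b)$ respectively, both contradictions. That $m$ and $q$ are then mutually inverse bijections between the two atom-sets follows from $qm=\id$ in one direction and, in the other, from the observation that $mq(b)\le b$ with $mq(b)$ itself an atom (by (a) applied to the atom $q(b)$), so $mq(b)=b$.

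The main, and essentially only, obstacle is bookkeeping the direction of the adjunction: which of $qm, mq$ is the identity versus merely the counit inequality determines which of the two hypotheses (preimage-of-bottom or counit) kills which atom-case. Once the directions are fixed, the argument is formal, and the pleasing feature is that each of the two subcases in each of (a), (b) is cleanly dispatched by exactly one of the two main hypotheses.
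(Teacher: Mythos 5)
Your proposal is correct and follows essentially the same route as the paper's proof: both rest on $qm=\id$ (from injectivity of the sup-map $m$), the counit inequality $mq\le\id$, and Hypothesis~(2) to rule out collapse to the bottom fibre, with the same case analysis killing any element strictly between the bottom and the image of an atom. The only difference is organizational — you prove both atom-preservation inclusions directly and then assemble the bijection, whereas the paper deduces (b) from (a) by applying $q$ — which is immaterial.
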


Before giving the proof of Proposition~\ref{prop:mqadjunction} we give an example and a counterexample.
\begin{example}
\label{exmp:qmadjunction}
\begin{enumerate}
\item An example  is~\cite[(2.34), Page~76]{qtheor}
\[\adjunction{\GMC}{$\q$}{\PVRM}{$\min$}.\] This satisfies Hypothesis~\eqref{prop:mqadjunction} because the bottom of of $\GMC$ is $C_{\{1\}}$, the constant map on $\PV$ always $\{1\}$ and the bottom of $\PVRM$ is \[\{\widetilde{1}\}=\{f\colon \{1\}\rightarrow T\mid f\ \text{is a relational morphism}\}.\] Then Hypothesis~\eqref{prop:mqadjunction} is satisfied as it is proved on~\cite[Page~121]{qtheor}.
\item Counterexample.
\begin{center}
\begin{tikzpicture}
\matrix [column sep=25pt,row sep=25pt]
{
\node[style=hassenode] (n2) {};\\
\node[style=hassenode] (n1) {}; \\
\node[style=hassenode] (n0) {}; \\
\node (label) {$L_1$}; \\
};
\tikzset{every node/.style={}}
\draw (n2) node [right] {$2$};
\draw (n1) node [right] {$1$};
\draw (n0) node [right] {$0\equiv$Bottom};
\draw (n2) -- (n1) -- (n0);
\end{tikzpicture}
\hskip20pt
\begin{tikzpicture}
\matrix [column sep=25pt,row sep=25pt]
{
\node[style=hassenode] (n2) {};\\
\node[style=hassenode] (n1) {}; \\
\node[style=hassenode] (nx) {}; \\
\node[style=hassenode] (n0) {}; \\
\node (label) {$L_2$}; \\
};
%\tikzset{every node/.style={}}
\draw (n2) node [right] {$2$};
\draw (n1) node [right] {$1$};
\draw (nx) node [right] {$x$};
\draw (n0) node [right] {$0\equiv$Bottom};
\draw (n2) -- (n1) -- (nx) -- (n0);
\end{tikzpicture}
\end{center}
Let $m\colon L_1\rightarrow L_2$ with $(j)m=j$ for $j=2,1,0$.  Let
$q\colon L_2\rightarrow L_1$ with $(j)q=j$ for $j=2,1,0$ and $(x)q=0$.
$\Atoms(L_1)=\{1\}$, $\Atoms(L_2)=\{x\}$, Proposition~\ref{prop:mqadjunction}(a),(b) are false, and (2) of Hypothesis~\eqref{prop:mqadjunction} fails.  Thus Hypothesis~\eqref{prop:mqadjunction} is necessary to imply Proposition~\ref{prop:mqadjunction}(a) or Proposition~\ref{prop:mqadjunction}(b).
\end{enumerate}
\end{example}

We now prove Proposition~\ref{prop:mqadjunction}.
\begin{proof}[Proof of~Proposition~\ref{prop:mqadjunction}]
Let us begin with the proof of (a).
If $a\in L_1$ is an atom and $am$ is not an atom of $L_2$, then there exists $\ell_2\in L_2$ such that $am>\ell_2>B_2$. Applying $q$ yields $amq=a>\ell_2q>B_1$ with $amq=a>\ell_2q$ following from the definition of $m$ and (1) of Hypothesis~\eqref{prop:mqadjunction} (cf.~\cite[Proposition~1.1.7]{qtheor}) and $\ell_2q>B_1$ by (2) of Hypothesis~\eqref{prop:mqadjunction}. But this contradicts that $a$ is an atom of $L_1$.

Conversely, let $A$ be an atom of $L_2$. Then $A q m=A\neq B_2$ since otherwise $A>A q m$ by (2) of Hypothesis~\eqref{prop:mqadjunction}, so $A>A q m>B_2$ by Proposition~\ref{prop:mqadjunction}(2), contradicting that $A$ is an atom of $L_2$.
Thus $A q m=A\neq B_2$. But $Aq$ is an atom of $L_1$, for if not there exists $C\in L_1$ with $Aq> C > B_1$. Applying $m$, which is injective and order preserving, yields $B_2=B_1m<Cm<Aqm=A$.
%\begin{center}
%\begin{tikzpicture}
%\matrix [column sep=25pt,row sep=25pt]
%{
%\node[style=hassenode] (n2) {};\\
%\node[style=hassenode] (n1) {}; \\
%\node[style=hassenode] (n0) {}; \\
%};
%\tikzset{every node/.style={}}
%\draw (n2) node [right] {$Aq m=A$};
%\draw (n1) node [right] {$Cm$};
%\draw (n0) node [right] {$B_1m=B_2$};
%\draw (n2) -- (n1) -- (n0);
%\end{tikzpicture}
%\end{center}
contradicting that~$A$ is an atom of $L_2$. This proves Proposition~\ref{prop:mqadjunction}(a).

To prove  Proposition~\ref{prop:mqadjunction}(b), just apply $q$ to both sides of  Proposition~\ref{prop:mqadjunction}(a). This completes the proof of  Proposition~\ref{prop:mqadjunction}.
\end{proof}

\subsection{Applications of Proposition~\ref{prop:mqadjunction}}
\begin{subequations}
\renewcommand{\theequation}{\thesection.\theparentequation)(\arabic{equation}}
\label{nn:qminadjunctions}
We use the $\q$ operator
 and the min map. Consider % (2.34) q-book
\begin{equation} \adjunction{\GMC}{$\q$}{\PVRM}{$\min$}. \end{equation}%1
  Hypothesis~\eqref{prop:mqadjunction} holds. See Example~\ref{exmp:qmadjunction}. Consider%(2.34) and p114
\begin{equation} %2
\adjunction{\GMC^-}{$\q$}{\PVRM^{-}}{$\min$}.
\end{equation}
Hypothesis~\eqref{prop:mqadjunction} holds. See Fact~\ref{fact:CCminusq_equals_CntPVminus} adapted to $\PVRM^-$ and  $\min$ is restriction of $\min$ from (\thesection.\ref{nn:qminadjunctions})(1). Consider
\begin{equation} %3
\adjunction{\GMC^+}{$\q$}{\PVRM^{+}}{$\min$}.
\end{equation}
Then~(2) of  Hypothesis~\eqref{prop:mqadjunction} fails because $\min(\id_{\PV})=\D\neq\max(\id_{\PV})$.
So instead, consider
\begin{equation} %4
\adjunction{\GMC^+}{$\q$}{\PVRM^{(+)}}{$\widetilde m$}
\label{eq:22_4}
\end{equation}
where $\widetilde m$ is the right adjoint of the restriction of $\q$.
 Hypothesis~\eqref{prop:mqadjunction} holds because of the way $\PVRM^{(+)}$ was defined (see Definition~\ref{def:extendingQBook}). Since the bottom of  $\PVRM^{(+)}$ is the maximal preimage of the identity map in $\PVRM$, (2) of Hypothesis~\eqref{prop:mqadjunction} holds. Consider
%\todo{It is different notation from here, check to make it correct.}
\begin{equation} %5
\adjunction{\Cnt(\PV)}{$\q$}{\BCC}{$m$}.
\end{equation}
 Hypothesis~\eqref{prop:mqadjunction} holds. % (2.37) and p121.
\begin{equation} %6
\adjunction{\Cnt(\PV)^-}{$\q$}{\BCC^-}{$m$}.
\end{equation}
Hypothesis~\eqref{prop:mqadjunction} holds. Use (\thesection.\ref{nn:qminadjunctions})(5).
\begin{equation} %7
\adjunction{\Cnt(\PV)^+}{$\q$}{\BCC^+}{$\min=m$}
\end{equation}
 Hypothesis~\eqref{prop:mqadjunction} fails, similar to (\thesection.\ref{nn:qminadjunctions})(3). Consider
\begin{equation} %8
\adjunction{\Cnt(\PV)^+}{$\q$}{\BCC^{(+)}}{$\min=m$}
\end{equation}
 Hypothesis~\eqref{prop:mqadjunction} holds, similar to (\thesection.\ref{nn:qminadjunctions})(4). Note that
\begin{equation} %9
\adjunction{\Cnt(\PV)}{$\q$}{\CC}{$m$}
\end{equation}
 \emph{is not defined} since $\q$ is \emph{not} $\INF$ on $\CC$. See \cite[Example~2.3.12]{qtheor}. Also%p66
\begin{equation} %10
\adjunction{\Cnt(\PV)}{$\q$}{\CC^{+}}{$m$}
\end{equation}
 \emph{is not defined} since $\q$ is \emph{not} $\INF$ on $\CC^+$. See \cite[Example 2.3.14]{qtheor}. Also %p67
\begin{equation} %11
\adjunction{\Cnt(\PV)}{$\q$}{\CC^{(+)}}{$m$}
\end{equation}
 \emph{is not defined} since $\q$ is \emph{not} $\INF$ on $\CC^{(+)}$. This is similar proof as for (\thesection.\ref{nn:qminadjunctions})(10).
The details go as follows. $\CC^{(+)}$ is by Definition~\ref{def:extendingQBook} the interval $[M(\id_\PV), \mathsf T]$ in $\CC$ where \[M(\id_\PV)=\{f\colon S\rightarrow T\mid f \text{ is a}\text{ relational }\text{morphism and }(S)\leq(T)\}.\] Now \cite[Lemma~2.3.13]{qtheor} holds with the same proof if ``positive continuous closed class'' is changed to ``continuously closed class containing $M(\id_\PV)$'' and ``division'' is changed to ``member of $M(\id_\PV)$''. Now the proof of Example 2.3.14 goes through with the above changes.

Also,
\begin{equation} %12
\adjunction{\Cnt(\PV)^-}{$\q$}{\CC^{-}}{$\min=m$}
\end{equation}
 \emph{is not defined} since $\q$ is \emph{not} $\INF$. Indeed, use \cite[Lemma~2.3.11]{qtheor} %page 66
and follow the proof scheme of Example 2.3.12, but change the definition of $\#u_n$ as follows: choose $1\neq S\xhookrightarrow{\ j\ } T$, $(S)\leq (T)$, $u_n\colon S\hookrightarrow T^n$ $n\geq 1$ $k\mapsto(k,\dots,k)\equiv((k)j,\dots,(k)j)$.  Then $u_n$ is a division and $R_n=\CCf(u_n)$.
%\comm{I definitely don't get this one.}
Now the proof follows as in Example 2.3.12.
\end{subequations}

%\begin{noname}[Tabulation of the results so far]
The results so far are in Figure~\ref{tab:results1}.
%\end{noname}

%\separator
\begin{figure}
%\small
\begin{tabular}{|c|p{2.05cm}|p{2.92cm}|p{2.65cm}|p{2.95cm}|}
\hline
& Subset of $\CC$ & Its atoms & Subset $\Cnt(\PV)$ & Its atoms\\
\hline
\hline
1 & $\CC$ & $\varnothing$ Prop~\ref{prop:CC_has_no_atoms} & $\Cnt(\PV)$ & $\varnothing$ Prop~\ref{prop:CntPV_has_no_atoms}\\
\hline
2 & $\CC^-$ & $\varnothing$ Cor~\ref{cor:NoAtoms} & $\Cnt(\PV)^-$ & $\varnothing$ Cor~\ref{cor:NoAtoms}\\
\hline
3 & $\BCC^{(+)}$ & $Xm$, $X$ is an atom of $\Cnt(\PV)^+$ & $\Cnt(\PV)^+$ & atoms are in one-to-one correspondence with compact $\smi$'s of $\PV$\\
\hline
4 & $\CC^+$ & ? wild guess $=\varnothing$  & & \\
\hline
5 & $\CC^{(+)}$ & ? wild guess $=\varnothing$  & & \\
\hline
6 & $\PVRM$ & $\Atoms(\GMC)\min$  &$\GMC$ & $\Atoms(\PVRM)\q$\\
\hline
7 & $\PVRM^-$ & $\Atoms(\GMC^-)\min$  &$\GMC^-$ & $\Atoms(\PVRM^-)\q$\\
\hline
8 & $\PVRM^{(+)}$ & $\Atoms(\GMC^+)\min$  &$\GMC^+$ & $\Atoms(\PVRM^{(+)})\q$\\
\hline
9 & $\PVRM^+$ & ?   & & \\
\hline
10 & $\BCC^{+}$ & ?   & & \\
\hline
11 & $\BCC$ & $\varnothing$   & $\Cnt(\PV)$ & $\varnothing$\\
\hline
12 & $\BCC^-$ & $\varnothing$   & $\Cnt(\PV)^-$ & $\varnothing$\\
\hline
\end{tabular}
\caption{Tabulation of results so far. See also later table Figure~\ref{tab:results2}.}
\label{tab:results1}
\end{figure}

%\section{}

\section{Atoms of $\PVRM$ and $\smi$ pseudovarieties}
%\comm{the abuse of notation between atoms of $\mathbf{PV}$ and their generators should be explicitly mentioned to the reader}
The atoms of $\PV$ are the pseudovarieties generated by the two-element semigroups and by the cyclic groups of prime order. The notations are $\mathbf 2^r$ (the two-element right zero semigroup), $\mathbf 2^l$ (the two-element left zero semigroup), $\{0,1\}$ under multiplication (the two-element semilattice) and $N_2$ the two-element null semigroup.  Sometimes we abuse the distinction between these semigroups and the pseudovariety they generate.

\begin{definition} \label{def:lifts}
\begin{itemize}
\item[(a)] A finite semigroup $T$ \emph{lifts} if the existence of a surjective morphism from a finite semigroup $S$,  $\varphi\colon S\twoheadrightarrow T$  implies there exists a subsemigroup $T'$ of $S$ so $T$ is isomorphic to $T'$, $T\cong T'\leq S$. So for any such surjective homomorphism onto $T$ there are isomorphic copies of $T$ in the preimage.
\item[(b)] A finite semigroup $T$ is \emph{projective} if it lifts and $\varphi$ restricted to $T'$, as above, is an isomorphism, so $\varphi(T')=T$. In other words given a surjective homomorphism $\varphi\colon S\to T$, there is a splitting homomorphism $\psi\colon T\rightarrow S$ such that $\psi\varphi=\id_T$.
\item[(c)] A finite nontrivial semigroup $T$ or pseudovariety $(T)$ is said to be \emph{very small} if, for all finite semigroups~$S$, the join $(S)\vee (T)=(S\times T)$ either covers or equals $(S)$.  In the lattice theory literature, one would say that $(T)$ has the covering property.
\end{itemize}
Intuitively, $T$ lifts if we can find it by going backwards on surjective morphisms but the isomorphic copies have nothing to do with the map.
If it turns out that one surjective morphism respects the isomorphic copies then $T$ is projective.
Clearly projective implies lifts. For instance, $\Z_{p^n}$ lifts for any prime $p$,  but is not projective.

The semigroup $N_2$ is very small but does not lift.  Any nontrivial semilattice is very small by~\cite[Theorem~2.4]{SLsmall}.
\end{definition}

We next work on the atoms of pseudovarieties of relational morphisms of $\PVRM$ and $\PVRM^-$.
This will be some work.
We first consider $\PVRM$ so $(X)$ denotes the member of $\PVRM$ generated by a set $X$ of relational morphisms.   First recall that if $\mathbf V$ is a pseudovariety of semigroups, then
\[\widetilde{\mathbf V}=\{f\colon S\to T\mid S\in \mathbf V\}\] is a pseudovariety of relational morphisms and it is the unique pseudovariety of relational morphism sent by $\q$ to the constant mapping with image $\mathbf V$.  See~\cite[Page~121]{qtheor}.

\begin{lemma}
If $(f)$ is an atom of $\PVRM$ where $f\colon S\to T$  with $S\neq 1$, then $(S)$ must be an atom of $\PV$.
\label{lem:1.23}
\end{lemma}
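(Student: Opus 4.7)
The plan is to argue by contradiction.  I would assume $(f)$ is an atom of $\PVRM$ while $(S)$ is \emph{not} an atom of $\PV$.  Because $S\neq 1$, we still have $(S)>\mathsf{B}$, so the failure of the atom property yields a pseudovariety $\mathbf V$ with $\mathsf{B}<\mathbf V<(S)$, and then a nontrivial finite semigroup $S'\in\mathbf V$ with $(S')<(S)$.  The goal is then to exhibit a PVRM strictly between $\mathsf{B}$ and $(f)$, which will contradict the atom hypothesis.

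The natural candidate is $\mathbf R:=(f)\cap\widetilde{(S')}$, using the PVRM $\widetilde{\mathbf W}=\{g\colon A\to B\mid A\in \mathbf W\}$ recalled just before the lemma.  Intersections of PVRMs are again PVRMs, so $\mathbf R$ is a PVRM.  The inequality $\mathbf R<(f)$ is immediate: since $(S')<(S)$ we have $S\notin (S')$, hence $f\notin\widetilde{(S')}$, and therefore $f\notin \mathbf R$.

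The substantive step is verifying $\mathbf R>\mathsf{B}$, i.e., producing a relational morphism in $(f)$ whose source already lies in $(S')$.  Because $S'\in (S)$, there is a division $S'\prec S^n$ for some $n$, coming from a subsemigroup $W\leq S^n$ and a surjection $W\twoheadrightarrow S'$; this yields a division $d\colon S'\to S^n$ in~$\D$.  Composing with the product $f^n\colon S^n\to T^n$ produces a relational morphism $g:=d\cdot f^n\colon S'\to T^n$, and $g$ fits the generation normal form $d_1(f_1\times\cdots\times f_n)d_2$ of~\cite[Proposition~2.1.14]{qtheor} (take $d_1=d$, each $f_i=f$, and $d_2=\id_{T^n}$), so $g\in (f)$.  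The source $S'$ lies in $(S')$, so $g\in\widetilde{(S')}$, and hence $g\in\mathbf R$.  Since $S'$ is nontrivial, $g\notin\widetilde 1$, which is the bottom of $\PVRM$; therefore $\mathbf R>\mathsf{B}$, contradicting that $(f)$ is an atom.

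The main obstacle I expect is this last step: we must realise an honest element of $(f)$ whose source has been shrunk all the way down to $S'$, which forces us to invoke the division $S'\prec S^n$ and the generating normal form for $\PVRM$ (strictly speaking, for $\CCf$; since $(f)$ in $\PVRM$ certainly contains the $\CCf$-generated class, this suffices, so no $(\text{co-re})$-specific closure is needed).  The other two ingredients --- the strict containment $\mathbf R<(f)$ and the fact that $\widetilde{(S')}$ is a PVRM --- drop out immediately from the definitions collected in the preliminaries.
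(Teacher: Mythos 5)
Your proposal is correct and follows essentially the same route as the paper: both arguments intersect $(f)$ with $\widetilde{\mathbf W}$ for a nontrivial pseudovariety $\mathbf W$ strictly below $(S)$, use a division $d\colon S'\to S^n$ composed with $f^n$ to show the intersection is strictly above the bottom of $\PVRM$, and use $S\notin\mathbf W$ to get strictness below $(f)$. The only (cosmetic) difference is that the paper takes $\mathbf W=(a)$ for an atom $(a)$ of $\PV$ below $(S)$ (justified by a short structural case analysis), whereas you take any nontrivial $(S')<(S)$ coming directly from non-minimality of $(S)$, which suffices just as well for this lemma.
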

\begin{proof}
 If $(S)$ is not an atom of $\PV$ (and so $S\neq \{1\}$), then there exists an atom $(a)$ of $\PV$, such that $(a)<(S)$. Indeed, if $S$ is a finite semigroup not containing $(N_2)$, then $S$ is completely regular; it must have a single $\mathcal J$-class if it doesn't have  $\{0,1\}$ as a divisor; it must be a group if it also doesn't have $\mathbf 2^r$ and $\mathbf 2^l$ as a divisor and it must be a trivial group if it has no cyclic group of primer order as a divisor.
    %Table 7.2, p469

So for some $n\geq 1$,  there is a division $d\colon a\rightarrow S^n$. Consider $df^n$.  Then $df^n\in (f)\cap \widetilde{(a)}$, but $f\notin \widetilde{(a)}$.  Therefore, $\mathsf B<(f)\cap \widetilde{(a)}<(f)$ and so $f$ is not an atom.
%=a\xtwoheadleftarrow{\varphi}\leq S^n\xrightarrow{\varphi^n}T^n$, $df^n\colon a\xrightarrow{} T^n$.
%Now by the compactness formula for $\GMC$ $(f)\q((T))=(S)$, Proposition 2.4.22. of \cite{qtheor} page 87.
%and $(df^n)\q((T))=(a)<(S)$ and clearly $(df^n)\leq(f)$, so since $\q$ is order-preserving $\BOTTOM\neq(df^n)<(f)$. Thus $(f)$ is not an atom of $\PVRM$, %as we have proved the counterpositive.
\end{proof}

\begin{theorem}
If $a=\mathbf 2^r$, $\mathbf 2^l$ or $\{0,1\}$, i.e., is a projective %\comm{Do you mean 4.1.33 p229 instead of Def 3.1.29 p135?}
 atom of $\PV$, then $(\id_a)$ is an atom of $\PVRM$.
\label{thm:1.24}
\end{theorem}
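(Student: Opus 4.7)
My plan is to prove $(\id_a)$ is an atom of $\PVRM$ by showing that any $\mathsf R\in\PVRM$ with $\mathsf B<\mathsf R\le (\id_a)$ must already contain $\id_a$, so that $\mathsf R=(\id_a)$.

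The first step is to pick some $f\in \mathsf R$ whose source $S$ is nontrivial; such an $f$ exists because $\mathsf R$ strictly contains $\mathsf B=\widetilde{1}$, whose elements all have trivial source. Since $f\in(\id_a)$ and $(\id_a)$ is generated from $\id_a$ by products, sub-divisors and co-restriction (see the $\CCf$-description in Proposition~2.1.14 of \cite{qtheor} together with the co-re axiom defining $\PVRM$), the source $S$ must lie in $(a)$. A case-by-case check for each of the three projective atoms shows that any nontrivial member of $(a)$ contains an isomorphic copy of $a$ as a subsemigroup: in a right-zero or left-zero semigroup any two distinct elements work, and in a nontrivial semilattice any two elements (replacing one by their meet if they are incomparable) give a $2$-element chain isomorphic to $\{0,1\}$. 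Restricting $f$ to this copy $a\hookrightarrow S$ produces a sub-relational-morphism $f'\colon a\to T$, which is still in $\mathsf R$.

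The second step invokes projectivity. The graph of $f'$ is a subsemigroup $G\le a\times T$ and the first projection $\pi_1\colon G\twoheadrightarrow a$ is surjective. Projectivity of $a$ yields a homomorphic splitting $\psi\colon a\to G$ with $\pi_1\psi=\id_a$; writing $\psi(x)=(x,s(x))$ exhibits $s=\pi_2\psi\colon a\to T$ as a homomorphism whose graph is contained in the graph of $f'$, so that $s\subseteq f'$ as relational morphisms, and hence $s\in\mathsf R$. Provided $s$ is injective, $s(a)\le T$ is a copy of $a$ inside $T$, and then $\id_a$ is recovered as a division of $s$ by post-composing with the inverse isomorphism $s(a)\to a$ (itself a division obtained from the inclusion $s(a)\hookrightarrow T$). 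This places $\id_a\in (s)\subseteq\mathsf R$ and completes the atomicity argument.

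The principal obstacle is arranging that the section $s$ be injective. A priori, projectivity of $a$ might supply only a \emph{flat} splitting $\psi(x)=(x,t)$ for some common idempotent $t\in f'(0)\cap f'(1)$, and then $s$ would collapse $a$ onto $\{t\}$. To rule this out one uses the fact that $f\in(\id_a)$ forces the codomain $T$ to contain $a$ as a divisor: write $f\subseteq_s d_1 \id_a^n d_2$ with $d_1,d_2$ divisions as in the $\CCf$-formula, pull back along the diagonal $a\hookrightarrow a^n$ inside $d_2$, and apply projectivity of $a$ to the resulting surjection onto $a$ from a subsemigroup of $T^n$. The homomorphism $a\to T^n$ so obtained is necessarily non-collapsing (its composition with a suitable coordinate projection already separates the two points of $a$, by construction of the divisorial witness of $a$ in $T$), and after passing to an appropriate product refinement of $f'$ this descends to an injective sub-relational-morphism of $f'$, delivering the $s$ we need.
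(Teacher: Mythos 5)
Your first two steps track the paper's argument closely (restrict to a copy of $a$ inside the domain, then split the graph projection $\#f'\twoheadrightarrow a$ using projectivity), but the proof stands or falls on the final paragraph, and that paragraph is not a proof. The assertions there are unsubstantiated at exactly the critical points: it is never shown that the homomorphism $a\to T^n$ extracted from a normal form $f\subseteq_s d_1\,\id_a^{\,n}\,d_2$ has anything to do with the graph of $f'$, and the containment in that normal form goes the wrong way for your purposes --- $f$ sits \emph{inside} the composite $d_1(\id_a\times\cdots\times\id_a)d_2$, so a homomorphism contained in the composite need not be contained in $f$ (or in $f'$), and the phrase ``after passing to an appropriate product refinement of $f'$ this descends to an injective sub-relational-morphism of $f'$'' is precisely the step that would need an argument. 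As written, the injectivity of the section $s$ is not established, so the atomicity claim is not closed.

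The missing idea is the one the paper opens with: the divisions $\D$ form a pseudovariety of relational morphisms and $\id_a\in\D$, hence $(\id_a)\subseteq\D$, so \emph{every} $f\in(\id_a)$ is a division. For a division, distinct elements of the domain have disjoint image sets; applied to your $f'\colon a\to T$ this says $f'(0)\cap f'(1)=\varnothing$, so the ``flat splitting'' you worry about cannot occur and $s=\pi_2\psi$ is automatically injective --- your obstacle evaporates in one line. (The paper streamlines further: using closure under range extension and corestriction it replaces $f$ by the inverse of a surjective homomorphism and applies projectivity to $af^{-1}\to a$, giving directly that $\id_a$ divides $f$.) Alternatively, the non-injective case could have been absorbed rather than excluded: if $s$ collapsed $a$, then the collapsing morphism $a\to\{1\}$ would lie in $\mathsf R$, and by \cite[Pages~120--122]{qtheor} it generates $\widetilde{(a)}$, which already contains $\id_a$; this is exactly the device the paper uses in the proof of the classification of the atoms of $\PVRM$. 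Either repair makes your argument go through, but as submitted the key step is a gap.
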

\begin{proof}
%Consider $f\subseteq d_1(\id_a)^nd_2$, $f$ an arbitrary member of $(\id_a)$. %2.3.34 p75
%If $\dom f\neq 1$, $\dom f\xtwoheadleftarrow{} \leq a^n$, $n\geq 1$
%( $(\id_a)^0=\{1\}\rightarrow\{1\}$ ).
%Now using the elementary structure of $\RZ$, $\LZ$, $\Sl$ we have that there exists a surjective morphism $\varphi:\dom f \xtwoheadrightarrow{} a$. Thus consider $\varphi^{-1}f:a\rightarrow T_2$, so if
%$$d_1=\dom f\xtwoheadleftarrow{\psi_1} S_1 \xhookrightarrow{\ j_1\ }a^n$$
%and
%$$d_2=a^n \xtwoheadleftarrow{\psi_2} T_1 \xhookrightarrow{\ j_2\ } \codom f$$
%$$\varphi^{-1}f=a \xtwoheadleftarrow{\varphi} \dom f \xtwoheadleftarrow{\psi_1} S_1 \xhookrightarrow{\ j_1\ }a^n\xrightarrow{\id_{a^n}}a^n\xtwoheadleftarrow{\psi_2} T_1\xhookrightarrow{\ j_2\ }\codom f$$
%Now in $\PVRM$ we can forget $j_2$ by Axiom (co-re). %p52
%Since $a$ is projective %Def 4.1.33, p229
%there exists an isomorphic copy of $a\leq T_1$ so $(a)\psi_2\psi_1\varphi$ is defined and is isomorphic to $a$. Thus $a\xrightarrow{\id_a}a\subseteq \varphi^{-1}f$ and so $(f)\supseteq (\varphi^{-1}f)=(\id_a)$, so $(\id_a)\subseteq(f)\subseteq(\id_a)$ or $(f)=(\id_a)$, so $(\id_a)$ is an atom of $\PVRM$.
Since the divisions form a pseudovariety of relational morphisms, it follows that if $(a)\in \PV$ is one of the above projective atoms and $f\colon S\to T$ belongs $(\id_a)$, then $f$ is a division. By closure of pseudovarieties of relational morphisms under range extension and corestriction, we may assume it is the inverse of a surjective homomorphism.   Also, $(1_a)\subseteq \widetilde{(a)}$ and so $f\in \widetilde{(a)}$, whence $S\in (a)$.
%Also $f$ is a range extension of a divisor of $\id_a^n$ by~\cite[Proposition~2.3.34]{qtheor}.  Thus $S$ belongs to $(a)$.
If $S$ is trivial, then $f$ belongs to the bottom of $\PVRM$.  Otherwise, $a$ is a subsemigroup of $S$ by elementary properties of $(\mathbf 2^r)$, $(\mathbf 2^l)$ and $(\{0,1\})$. Since $f$ is the inverse of a surjective homomorphism and $a$ is projective we obtain that $\id_a$ divides $f$ via the diagram
\[\begin{tikzcd}
a\ar[hook]{r}\ar{d}[swap]{1_a} & S\ar{d}{f}\\
a\ar{r} & T
\end{tikzcd}\]
where the top arrow is the inclusion and the bottom arrow is a homomorphism splitting of $f^{-1}|_{af^{-1}}\colon af^{-1}\to a$.
\end{proof}

\begin{lemma}\label{nocyclicfront}
A relational morphism $f\colon \langle x\rangle\to T$ does not generate an atom for any non-trivial cyclic semigroup $\langle x\rangle$.
\end{lemma}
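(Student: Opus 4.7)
The plan is to reduce using Lemma~\ref{lem:1.23}: if $(\langle x\rangle)$ is not an atom of $\PV$, then $(f)$ is not an atom of $\PVRM$, and there is nothing to prove. So I may assume $(\langle x\rangle)$ is an atom of $\PV$. Among the atoms of $\PV$ listed at the start of this section --- $\mathbf{2}^r$, $\mathbf{2}^l$, the two-element semilattice $\{0,1\}$, $\mathbb{Z}_p$ for $p$ prime, and $N_2$ --- only $\mathbb{Z}_p$ and $N_2$ are monogenic semigroups, because every element of $\mathbf{2}^r$, $\mathbf{2}^l$ and $\{0,1\}$ is idempotent, so the monogenic subsemigroup it generates is a singleton. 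Hence $\langle x\rangle\cong\mathbb{Z}_p$ for some prime $p$, or $\langle x\rangle\cong N_2$.

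In each of these two cases I would exhibit $g\in (f)$ with $\mathsf{B}\neq (g)\subsetneq (f)$, exploiting the non-projectivity of $\mathbb{Z}_p$ and $N_2$ (see Definition~\ref{def:lifts} and the remarks thereafter). Concretely, the surjective homomorphisms $\varphi\colon \mathbb{Z}_{p^2}\twoheadrightarrow \mathbb{Z}_p$ and $\psi\colon \langle y\mid y^3=y^4\rangle\twoheadrightarrow N_2=\langle y\mid y^2=y^3\rangle$ do not split, yielding multi-valued divisions $\varphi^{-1}\colon \mathbb{Z}_p\to \mathbb{Z}_{p^2}$ and $\psi^{-1}\colon N_2 \to \langle y\mid y^3=y^4\rangle$. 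The idea is to combine $\varphi^{-1}$ (resp.~$\psi^{-1}$) with $f$ --- for instance via the diagonal $\langle x\rangle\to \langle x\rangle\times\langle x\rangle$ followed by the product with $f$ and a suitable co-restriction --- to obtain the desired $g$: non-triviality of $(g)$ follows because $g$ has non-trivial domain $\langle x\rangle$, while strict inclusion $(g)\subsetneq (f)$ will follow from the fact that deriving $f$ from $g$ within the closure operations of $\PVRM$ would amount to producing a homomorphic splitting of $\varphi$ (resp.~$\psi$), which does not exist.

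The main obstacle is the explicit construction of such a $g$ and the rigorous verification that $(g)\subsetneq (f)$. I expect this to require the closure description of~\cite[Proposition~2.1.14]{qtheor}, together with a careful case analysis of the possible forms a hypothetical derivation $f\subseteq_s d_1(g\times\cdots\times g)d_2$ could take, showing that in each such form the existence of the derivation forces a splitting of $\varphi$ (resp.~$\psi$) that our choice has ruled out.
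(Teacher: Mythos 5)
There is a genuine gap: the entire content of the lemma is the exhibition of a specific $g$ with $\mathsf B\neq (g)\subsetneq (f)$, and your proposal leaves exactly this unproven. You describe $g$ only loosely (``for instance via the diagonal\ldots''), you do not verify that your $g$ actually lies in $(f)$ under the closure operations generating a pseudovariety of relational morphisms, and you explicitly defer the key non-membership $f\notin (g)$ to an unexecuted ``case analysis of the possible forms a hypothetical derivation $f\subseteq_s d_1(g\times\cdots\times g)d_2$ could take.'' That syntactic strategy is unlikely to go through as stated: derivability in $\PVRM$ allows arbitrary divisions on both sides, products, range extension and co-restriction, and non-derivability is not naturally certified by ``this would force a splitting of $\varphi$''; some computable invariant is needed to separate $(g)$ from $(f)$, and your sketch provides none. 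The paper avoids this entirely. It first reduces $f$ (by restricting to the graph and dividing on the codomain by $b^{-1}$) to $a^{-1}$, the inverse of a surjection $a\colon\langle y\rangle\twoheadrightarrow\langle x\rangle$ with $\langle y\rangle$ cyclic and no proper subsemigroup of $\langle y\rangle$ mapping onto $\langle x\rangle$; it then takes a non-split surjection $c\colon\langle u\rangle\twoheadrightarrow\langle y\rangle$ with the same minimality property, puts $g=a^{-1}c^{-1}\in(f)$ (codomain division), and certifies $f\notin(g)$ not by analysing derivations but by evaluating the $\q$-operator: $\langle x\rangle\notin (g)\q\big((\langle y\rangle)\big)$ by~\cite[Proposition~2.4.22]{qtheor}, whereas $\langle x\rangle\in (f)\q\big((\langle y\rangle)\big)$. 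Your plan contains neither the reduction of $f$ to an inverse surjection (without which the arbitrary codomain $T$ is in the way) nor any analogue of this $\q$-computation, so the core of the argument is missing.

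Two smaller points. Your opening reduction via Lemma~\ref{lem:1.23} is legitimate (and not circular), but note the paper's proof does not need it: it works for every non-trivial cyclic $\langle x\rangle$ directly. Also, the step ``$(\langle x\rangle)$ is an atom, hence $\langle x\rangle\cong\mathbb Z_p$ or $N_2$'' is stated too quickly: being an atom means $(\langle x\rangle)$ \emph{equals} one of the five atom pseudovarieties, not that $\langle x\rangle$ is one of the listed generators; one should add that a non-trivial monogenic member of $(\mathbb Z_p)$ is $\mathbb Z_p$ and of $(N_2)$ is $N_2$, while the other three atoms contain no non-trivial monogenic semigroups. That is fixable in a line, unlike the main gap above.
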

\begin{proof}
The relational morphism $f$ contains a relational morphism of the form
%\[\begin{tikzcd}
%\langle y\rangle\ar[two heads]{d}[swap]{a}\ar[two heads]{r}{b} & \langle z\rangle\\
%\langle x\rangle
%\end{tikzcd}\]
\begin{equation*}
%\label{eq:126a}
\begin{tikzpicture}
\matrix (m) [matrix of math nodes, row sep=4em,column sep=4em, text height=1.5ex, text depth=0.25ex,ampersand replacement=\&]
{ \langle y \rangle \& \langle z \rangle  \\
  \langle x \rangle \& \\ };
\path[>=latex,->>] (m-1-1.east)  edge node[above] {$b$}   (m-1-2.west);
\path[>=latex,->>,dashed] (m-1-1.south)  edge node[left] {$a$}  (m-2-1.north);
\end{tikzpicture}
\end{equation*}
%\[\xymatrix{\langle y\rangle\ar@{->>}[r]^b\ar@{-->>}[d]_a & \langle z\rangle\\ \langle x\rangle &}\]
where $y$ maps to $x$ under $a$ and to $z$ under $b$ and no proper subsemigroup of $\langle y\rangle$ maps onto $\langle x\rangle$ by $a$. It suffices to show that $a^{-1}b$ does not generate an atom and so we may assume that $f=a^{-1}b$.
Note that $a^{-1}$ is in $(a^{-1}bb^{-1})=(fb^{-1})$ which is contained in $(f)$ by closure of pseudovarieties of relational morphisms under codomain division.  Thus we may assume $f=a^{-1}$.

Non-trivial cyclic semigroups are not projective (one can verify this directly or use the results of either~\cite{projective} or~\cite{Zelenyuk} which imply that any projective finite semigroup is a band).
So there exists a surjective homomorphism $c\colon \langle u\rangle\twoheadrightarrow \langle y\rangle$ that does not split (using non-trivial cyclic semigroups are not projective) and, moreover, we may assume that no proper subsemigroup of $\langle u\rangle$ maps onto $\langle y\rangle$ via $c$. Note that $\langle u\rangle\notin (\langle y\rangle)$ because $\langle y\rangle$ is free on one generated in the pseudovariety it generates and $c$ does not split.
Then $g=a^{-1}c^{-1}$ is contained in $(f)$ by closure under codomain division.  We claim that $\langle x\rangle$ is not in $(g)\mathfrak q(\langle y\rangle)$.  This follows from~\cite[Proposition~2.4.22]{qtheor}.  Indeed, any subsemigroup $T$ of $\langle u\rangle$ in the pseudovariety generated by $\langle y\rangle$ must be proper and hence map by $c$ into a proper subsemigroup $U$ of $\langle y\rangle$.  Then the image under $a$ of $U$ is proper and so we get something in a proper subpseudovariety of $(\langle x\rangle)$.
\end{proof}

\begin{theorem}
The atoms of $\PVRM$ are $(\id_a)$ with $a=\2^r,\2^l,\{(0,1),\cdot\}$, i.e., with $a$ is a projective atom.
\end{theorem}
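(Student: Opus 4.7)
The plan is to combine Lemma~\ref{lem:1.23}, Theorem~\ref{thm:1.24}, and Lemma~\ref{nocyclicfront} via a case analysis on the atom $(S)$ of $\PV$. Let $(f)$ be an atom of $\PVRM$ with $f\colon S\to T$. If $S=1$, then $f$ lies in the bottom $\widetilde{\{1\}}$ of $\PVRM$, so $(f)$ cannot be an atom; hence $S\neq 1$ and Lemma~\ref{lem:1.23} tells us $(S)$ is one of $(\mathbf 2^r)$, $(\mathbf 2^l)$, $(\{0,1\})$, $(N_2)$, or $(\Z_p)$ for some prime $p$. The remaining task is to verify that the three projective atoms yield $(f)=(\id_a)$ and that the two non-projective atoms cannot arise.

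For the three projective atoms $a\in\{\mathbf 2^r,\mathbf 2^l,\{0,1\}\}$, I would first observe that $a$ embeds as a subsemigroup of $S$: this is elementary in each case, since any two elements of a right-zero semigroup, a left-zero semigroup, or a semilattice span a copy of $a$. I would then replay the diagram from the proof of Theorem~\ref{thm:1.24}, but starting from an arbitrary relational morphism rather than one known to lie in $(\id_a)$: restrict $f$ to $a\leq S$, form the graph $\#(f|_a)\leq a\times T$ with surjective first projection, split it using projectivity of $a$, and take the composition of the splitting with the second projection to produce a homomorphism $\alpha\colon a\to T$ satisfying $\alpha(x)\in f(x)$ for every $x\in a$. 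The resulting commutative square
\[\begin{tikzcd}
a\ar[hook]{r}\ar{d}[swap]{1_a} & S\ar{d}{f}\\
a\ar{r}{\alpha} & T
\end{tikzcd}\]
exhibits $\id_a$ as a divisor of $f$, so $(\id_a)\subseteq (f)$. Atomicity of both sides (with $(\id_a)$ an atom by Theorem~\ref{thm:1.24}) then forces $(f)=(\id_a)$.

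For the non-projective atoms $a\in\{N_2,\Z_p\}$, I would exhibit a non-trivial cyclic subsemigroup $\langle x\rangle\leq S$: any nonzero element of a null semigroup generates $\langle x\rangle\cong N_2$, and any non-identity element of an elementary abelian $p$-group generates $\langle x\rangle\cong \Z_p$. The restriction $f|_{\langle x\rangle}\colon\langle x\rangle\to T$ lies in $(f)$ by closure under domain restriction and is not in the bottom of $\PVRM$ since its domain is non-trivial, so atomicity forces $(f)=(f|_{\langle x\rangle})$. But Lemma~\ref{nocyclicfront} says no relational morphism from a non-trivial cyclic semigroup generates an atom, giving the contradiction that rules out these two cases.

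The main obstacle is the projective step: one must verify that the diagrammatic construction really yields a division recognised by the closure axioms of $\PVRM$, so that $\id_a\in(f)$ is justified. This is essentially the argument in Theorem~\ref{thm:1.24} read in reverse, and reduces to a routine application of the standard $\PVRM$ axioms once the key splitting is in hand.
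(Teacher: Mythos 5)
Your overall architecture is the paper's: Lemma~\ref{lem:1.23} forces $(S)$ to be an atom of $\PV$; the cases $N_2$ and $\Z_p$ are eliminated by passing to a non-trivial cyclic domain and applying Lemma~\ref{nocyclicfront} (the paper composes with a division from a cyclic semigroup rather than restricting to a cyclic subsemigroup, an immaterial difference); and the projective atoms are handled by the graph-splitting diagram of Theorem~\ref{thm:1.24}, giving $(\id_a)\subseteq(f)$ and hence equality by atomicity.

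The genuine gap is in the projective step. The homomorphism $\alpha=\sigma\beta\colon a\to T$ obtained by composing the projective splitting $\sigma\colon a\to \#(f|_a)$ with the second projection need not be injective: if, say, $f$ relates every element of $a$ to a single idempotent of $T$ (e.g.\ the collapsing morphism $a\to\{1\}$), then $\alpha$ is forced to be constant. In that case your square does \emph{not} exhibit $\id_a$ as a divisor of $f$: in the generation formula $\id_a\subseteq_s d_1(f\times\cdots\times f)d_2$ both $d_1$ and $d_2$ must be divisions, and a map collapsing $a$ is not one, so the inference ``commutative square $\Rightarrow (\id_a)\subseteq(f)$'' is unjustified exactly there. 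This is why the paper, after producing the homomorphism $a\xrightarrow{\ \beta\ }T'$, splits into two cases: if $a\cong a\beta$ one gets $(\id_a)\subseteq(f)$ by corestriction and composition with the inverse isomorphism; if $|a\beta|=1$ one observes instead that the collapsing morphism $c_a\colon a\to\{1\}$ lies in $(f)$ and then invokes the constant-map analysis of \cite[pp.~120--122]{qtheor}, which yields $\widetilde{(a)}=(c_a)\ni\id_a$. That second case is not the ``routine application of the standard $\PVRM$ axioms'' your closing paragraph suggests; it rests on that specific fact about $\widetilde{\V}$. Once you add this case split, your argument coincides with the paper's proof.
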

\begin{proof}
Theorem~\ref{thm:1.24} proves that these are atoms. Lemma~\ref{lem:1.23} proves that all atoms are of the form $(f)$ where $f\colon S\to T$ with $(S)$ an atom of $\mathbf{PV}$.  If $(S)$ is not one of the pseudovarieties of right zero semigroups, left zero semigroups or semilattices, then $S$ either is a null semigroup or an elementary abelian $p$-group.  But then there is a division $d\colon C\to S$ with $C$ a non-trivial cyclic semigroup and replacing $f$ its divisor $df$, one may assume that $S$ is cyclic and so Lemma~\ref{nocyclicfront} implies that $(f)$ is not an atom.

It remains to show that if $(S)$ generates one of the pseudovarieties of right zero semigroups, left zero semigroups or semilattices, then $(f)=(1_a)$ with $a$ as in the theorem statement.   In this case, $a$ is a subsemigroup of $S$, so replacing $f$ by a restriction, we may assume that $S=a$, that is, $f\colon a\to T$ with $a$ one of the projective semigroups $\mathbf 2^r$, $\mathbf 2^l$  or $\{0,1\}$.

 Diagram $f$ as:
\begin{equation*}
%\label{eq:126a}
\begin{tikzpicture}
\matrix (m) [matrix of math nodes, row sep=4em,column sep=2em, text height=1.5ex, text depth=0.25ex,ampersand replacement=\&]
{ \#f \& T' \&  T \\
 a \& \&\\ };
\path[>=latex,->>] (m-1-1.east)  edge node[above] {$\beta$}   (m-1-2.west); \path[>=latex,right hook->] (m-1-2.east)  edge    (m-1-3.west);
\path[>=latex,->>] (m-1-1.south)  edge node[left] {$\alpha$}  (m-2-1.north);\path[>=latex,->] (m-2-1.north east)  edge node[below] {$f$}  (m-1-3.south west);
\end{tikzpicture}
\end{equation*}
and as usual, without loss of generality, we can assume
\begin{equation*}
%\label{eq:126a}
\begin{tikzpicture}
\matrix (m) [matrix of math nodes, row sep=4em,column sep=4em, text height=1.5ex, text depth=0.25ex,ampersand replacement=\&]
{ \#f \& T'  \\
 a \& \\ };
\path[>=latex,->>] (m-1-1.east)  edge node[above] {$\beta$}   (m-1-2.west);
\path[>=latex,->>] (m-1-1.south)  edge node[left] {$\alpha$}  (m-2-1.north);\path[>=latex,->] (m-2-1.north east)  edge node[below] {$f$}  (m-1-2.south west);
\end{tikzpicture}
\end{equation*}
by closure of pseudovarieties of relational morphisms under corestriction.  Then since $a$ is projective, $a$ is a subsemigroup of $\#f$ in such a way that $\alpha|_a$ is an isomorphism  and so $(f)$ contains a homomorphism $a\xrightarrow{\beta}T'$.
 Since $|a|=2$, either $a\cong a\beta$ or $|a\beta|=1$. In the first case $(\id_a)\subseteq (f)$, so $(\id_a)= (f)$ if $(f)$ is an atom. In the second case the collapsing map $c_a\colon a\to \{1\}$ belongs to $(f)$ so again $\id_a\subseteq f$ by \cite[Pages 120--122]{qtheor}) and we are done.
\end{proof}

\begin{corollary}
\begin{itemize}
\item[(a)] The atoms of $\PVRM^-$ are the atoms of $\PVRM$, that is, $\{(\id_a)\mid a=\2^l, 2^r, (\{0,1\}, \cdot )\}$ \up(so $a$ is a projective atom\up).
\item[(b)] The atoms of $\GMC$ \up($\GMC^-$\up) are $(\id_a)\q$, where $a$ is a projective atom.
\end{itemize}
\end{corollary}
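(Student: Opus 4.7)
The plan is to deduce both parts directly from results already established, with essentially no new combinatorial work.

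For part~(a), observe that by Definition~\ref{def:extendingQBook} the lattice $\PVRM^-$ is the interval $[\mathsf B,\D]$ in $\PVRM$, so Fact~\ref{fact:1.3} gives $\Atoms(\PVRM^-)=\Atoms(\PVRM)\cap\D^\downarrow$. The immediately preceding theorem identifies $\Atoms(\PVRM)$ as $\{(\id_a)\mid a=\2^r,\2^l,\{0,1\}\}$. Each such $\id_a$ is a bijective homomorphism, hence a division, so that $(\id_a)\subseteq\D$; equivalently $(\id_a)\in\D^\downarrow$. Thus every atom of $\PVRM$ already lies in $\PVRM^-$ and conversely, yielding the claimed equality.

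For part~(b), the plan is to invoke Proposition~\ref{prop:mqadjunction}(b) applied to the two adjunctions
\[\adjunction{\GMC}{$\q$}{\PVRM}{$\min$}\quad\text{and}\quad\adjunction{\GMC^-}{$\q$}{\PVRM^-}{$\min$},\]
both of which were verified to satisfy Hypothesis~\eqref{prop:mqadjunction} in items (2.2)(1) and (2.2)(2) respectively. The proposition then gives
\[\Atoms(\GMC)=\Atoms(\PVRM)\q\qquad\text{and}\qquad\Atoms(\GMC^-)=\Atoms(\PVRM^-)\q.\]
Combining with the previous theorem and with part~(a), each right-hand side equals $\{(\id_a)\q\mid a\text{ is a projective atom of }\PV\}$.

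There is no real obstacle here: the classification of $\Atoms(\PVRM)$ and the verification that the relevant $\q$--$\min$ pairs meet Hypothesis~\eqref{prop:mqadjunction} are the substantial inputs, and both are already in hand. The only point requiring care is the short check that $(\id_a)\subseteq\D$, which follows from $\id_a$ being a division together with the fact that $\D$ is itself a pseudovariety of relational morphisms.
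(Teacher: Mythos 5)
Your argument is correct and follows the paper's own route: part (a) is exactly the application of Fact~\ref{fact:1.3} to the interval below $\D$ (with the short check that each $(\id_a)$ lies in $\D^\downarrow$, which the paper leaves implicit), and part (b) is the application of Proposition~\ref{prop:mqadjunction} to the $\q$--$\min$ adjunctions for $\PVRM$ and $\PVRM^-$, just as the paper intends. No further comment is needed.
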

\begin{proof}
\begin{itemize}
\item[(a)] Use Fact~\ref{fact:1.3}.
\item[(b)] Use Theorem~\ref{prop:mqadjunction}.
\end{itemize}
\end{proof}
\begin{figure}
%\small
\begin{tabular}{|c|p{2.3cm}|p{3.1cm}|p{2.7cm}|p{2.5cm}|}
\hline
& Subset of $\CC$ & Its atoms & Subset $\Cnt(\PV)$ & Its atoms\\
\hline
\hline
1 & $\PVRM$ & $\id_a, a=\2^l, 2^r, (\{0,1\}, \cdot )$  & $\GMC$ & $(\id_a)\q$ \\
\hline
2 & $\PVRM^-$ & same as above & $\GMC^-$ & same as above\\
\hline
\end{tabular}
\caption{Tabulation of results continued.}
\label{tab:results2}
\end{figure}

Knowing about which atoms lift or are very small is related to the atoms of $\GMC^+$ in the following way.

\begin{proposition}
\label{nn:131}
If $a$ one of the atoms $\mathbf 2^r$, $\mathbf 2^l$, $N_2$, $\{0,1\}$ or $\mathbb Z_p$ with $p$ prime that lifts and $(a)$ is very small then $\V\mapsto \V\vee (a)$ is an atom of $\GMC^+$.
\end{proposition}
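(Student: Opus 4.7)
The plan is in three steps; throughout, write $\alpha:=\V\mapsto \V\vee(a)$.

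\smallskip

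\noindent\emph{Step 1: $\alpha\in\GMC^+$.}  The operator $\alpha$ is the pointwise join $\id_\PV\vee C_{(a)}$, where $C_{(a)}$ is the constant operator with value $(a)$.  Both summands lie in $\GMC$: one has $\id_\PV=\D\q$, while $\widetilde{(a)}\q(\V)=(a)$ for every $\V$, giving $C_{(a)}=\widetilde{(a)}\q$.  Since joins in $\Cnt(\PV)$ are computed pointwise and $\q\colon\PVRM\to\GMC$ preserves joins, $\alpha\in\GMC$.  Positivity $\alpha\ge\id_\PV$ is immediate, whence $\alpha\in\GMC^+$.

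\smallskip

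\noindent\emph{Step 2: reduction via ``very small''.}  Let $\beta\in\GMC^+$ with $\id_\PV<\beta\le\alpha$.  Because finite meets in $\Cnt(\PV)$ are pointwise, $\V\le \beta(\V)\le\V\vee(a)$ for every $\V$.  The very-small hypothesis forces the interval $[\V,\V\vee(a)]$ to have at most two elements, so $\beta(\V)\in\{\V,\V\vee(a)\}$.  For $\V$ containing $a$ the two options coincide, so proving $\beta=\alpha$ reduces to showing $a\in\beta(\V)$ whenever $a\notin\V$; by continuity of $\beta$ and algebraicity of $\PV$, this further reduces to $a\in\beta((S))$ for every finite semigroup $S$ with $a\notin(S)$.

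\smallskip

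\noindent\emph{Step 3: use of ``lifts''.}  Write $\beta=\mathbf R\q$ with $\mathbf R\in\PVRM^{(+)}$, so that $\max(\id_\PV)\subseteq\mathbf R$.  Since $\beta\neq\id_\PV$, there is a finite $S_0$ with $a\in\beta((S_0))$; unpacking, there are finite semigroups $U_1,\dots,U_k$ with $a$ a divisor of $\prod U_i$ and relational morphisms $f_i\colon U_i\to T_i$ in $\mathbf R$ and $T_i\in(S_0)$.  The lifting hypothesis upgrades the division to a genuine embedding $a\hookrightarrow\prod U_i$; composing with $\prod f_i\in\mathbf R$ and using closure of $\mathbf R$ under domain restriction produces $h\colon a\to T^*\in\mathbf R$ with $T^*\in(S_0)$.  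For an arbitrary target $\V=(S)$ with $a\notin(S)$, the goal is then to combine $h$ with identity maps $\id_{T''}\in\max(\id_\PV)\subseteq\mathbf R$ for $T''\in(S)$, via products and corestriction in $\PVRM$, into a relational morphism $a\to T'\in\mathbf R$ with $T'\in(S)$; this places $a\in\beta((S))$ and hence $\beta=\alpha$.

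\smallskip

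\noindent\emph{Main obstacle.}  The intricate step is the final construction in Step~3: producing a witness $a\to T'\in\mathbf R$ with $T'$ in the \emph{prescribed} target $(S)$ out of the single witness $h$ whose codomain lives in $(S_0)$.  For the projective atoms $\mathbf 2^r,\mathbf 2^l,\{0,1\}$ this is essentially the splitting argument of Theorem~\ref{thm:1.24}, but for the merely lifting atoms $N_2$ and $\mathbb Z_p$ no splitting is available, and the transport must rely on the finer closure axioms of $\PVRM$ together with $\max(\id_\PV)\subseteq\mathbf R$ to eliminate the auxiliary $(S_0)$-coordinate.
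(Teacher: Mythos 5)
Your Steps 1 and 2 are essentially sound (though in Step 2 you should note that the covering property is stated for compact $(S)$ and needs a one-line argument to pass to arbitrary $\V$; also it is only needed at a single well-chosen compact $(S)$, not everywhere). The genuine gap is exactly the point you flag as the ``main obstacle'' in Step 3: you never produce the witness $a\to T'$ with $T'$ in an arbitrary target $(S)$, and no amount of multiplying your $h\colon a\to T^*$ (with $T^*\in(S_0)$) by identity maps from $\max(\id_\PV)$ will remove the $(S_0)$-coordinate, because the closure operations of $\PVRM$ do not let you shrink the codomain below what $h$ itself provides. So as written the argument proves nothing for $N_2$ and $\Z_p$, and even for the projective atoms it is not completed.

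The missing idea, which makes the transport problem disappear, is to exploit that each of the listed atoms is generated by a \emph{congruence-free} semigroup $a$. Pick one compact $(S)$ with $a\notin(S)$ and $(S)<\beta((S))$; very smallness gives $\beta((S))=(S)\vee(a)$, so $S\times a\in\R\q((S))$, i.e.\ there is $f\colon S\times a\to S^n$ in $\R$. The graph $\# f$ surjects onto $S\times a$ and hence onto $a$, so lifting puts a copy of $a$ inside $\# f$. Now the projection $\beta_1\colon \# f\to S^n$ restricted to that copy of $a$ is a homomorphism into $S^n$; since $a$ is congruence-free and $a\notin(S)$, its image must be a single idempotent. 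By Tilson's Lemma (valid for members of $\PVRM^+$) $\beta_1\in\R$, and hence its divisor, the collapsing morphism $c_a\colon a\to\{e\}$, lies in $\R$. But $c_a$ generates $\widetilde{(a)}=\{g\colon U\to T\mid U\in(a)\}$, and the codomain of $c_a$ is trivial, so $(a)\leq\R\q(\W)=\beta(\W)$ for \emph{every} $\W$ simultaneously --- no transport from $(S_0)$ to $(S)$ is required. Combined with $\beta\geq\id_\PV$ and $\beta\leq\id_\PV\vee(a)$ this gives $\beta=\id_\PV\vee(a)$. Without the congruence-freeness/collapsing-morphism step your outline cannot be closed, so the proposal as it stands is incomplete.
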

\begin{proof}
We must show that if $\alpha\in\GMC^+$ satisfies $\id_\PV<\alpha\leq \id_\PV\vee(a)$ then $\alpha=\id_\PV\vee(a)$.

Choose a finite semigroup $S$ so $a\notin (S)$ and $(S)<(S)\alpha\leq(S)\vee(a)$. Then since $(a)$ is very small $(S)\alpha=(S)\vee(a)$.

Choose $\R\in\PVRM^+$ so $\R\q=\alpha$. Then there exists a relational morphism $f\in\R$ diagrammed as
\begin{equation*}
%\label{eq:126a}
\begin{tikzpicture}
\matrix (m) [matrix of math nodes, row sep=4em,column sep=4em, text height=1.5ex, text depth=0.25ex,ampersand replacement=\&]
{ \#f \& S^n  \\
 S\times a \& \\ };
\path[>=latex,->] (m-1-1.east)  edge node[above] {$\beta_1$}   (m-1-2.west);
\path[>=latex,->>] (m-1-1.south)  edge node[left] {$\alpha_1$}  (m-2-1.north);\path[>=latex,->] (m-2-1.north east)  edge node[below] {$f$}  (m-1-2.south west);
\end{tikzpicture}
\end{equation*}

Let $\varphi_2\colon S\times a\twoheadrightarrow a$ be the projection.  Then  $\alpha_1\phi_2\colon \#f\twoheadrightarrow a$ is surjective and thus, since $a$ lifts, $a\leq\#f$. Now $a$ is congruence-free, i.e., has no non-trivial proper quotients.
Thus $\beta_1$ restricted to $a$ has trivial image $\{e\}$ since $a\notin(S)$. Now by Tilson's Lemma~\cite[Lemma~2.1.9]{qtheor}, valid for elements of $\PVRM^+$, and closure of pseudovarieties of relational morphism under range restriction, $\beta_1$ belongs to $\R$ and hence so does its divisor
the collapsing morphism $a\rightarrow \{e\}$.  Thus $\widetilde{(a)}\leq\R$ (see \cite[Pages~120--121]{qtheor}), which implies $(a)\leq (\W)\alpha$ for all $\W\in\PV$. Therefore, $\alpha=\id_\PV\vee(a)$.
\end{proof}

\begin{theorem}
\begin{itemize}
\label{thm:132}
\item[a)] The atoms $2^l,2^r,\{0,1\},\Z_p$ lift, but $N_2$ does not lift.
\item[b)] $N_2$ and $\{0,1\}$ are very small, but $2^l,2^r,\Z_p$ are not very small. Hence $\V\rightarrow\V\vee(\{0,1\})$ is an atom of $\GMC^+$.
\end{itemize}
\end{theorem}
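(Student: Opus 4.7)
The plan is to prove (a) and (b) separately, then read off the $\GMC^+$ atom claim as an application of Proposition~\ref{nn:131} to the unique atom of $\PV$ that both lifts and is very small, namely $\{0,1\}$.

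For the lifting statements in (a), I use minimal-ideal and local-monoid constructions. Given $\varphi\colon S\twoheadrightarrow \mathbf{2}^r$, let $K$ be the minimal ideal of $S$; then $K$ is completely simple, $K\cong M(G;I,\Lambda;P)$, and $\varphi|_K\colon K\twoheadrightarrow \mathbf{2}^r$ is surjective because $\mathbf{2}^r$ is simple and $\varphi(K)$ is one of its ideals. The maximum right-zero homomorphic image of $K$ has $|\Lambda|$ elements, so $|\Lambda|\ge 2$. Any two idempotents of $K$ lying in the same $\mathcal{R}$-class but distinct $\mathcal{L}$-classes then form a copy of $\mathbf{2}^r$; the case $\mathbf{2}^l$ is dual (using $|I|\ge 2$ and $\mathcal{L}$-related idempotents). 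For $\{0,1\}$, I set $f=t^\omega$ for $t\in\varphi^{-1}(1)$ and $e=(fsf)^\omega$ for $s\in\varphi^{-1}(0)$. Since $fsf\in fSf$, the local monoid with identity $f$, each power $(fsf)^n$ is fixed on both sides by $f$, so $ef=fe=e$, and $\{e,f\}$ is a copy of the $2$-element semilattice. For $\mathbb{Z}_p$, given $\varphi\colon S\twoheadrightarrow\mathbb{Z}_p$ and $s\in\varphi^{-1}(g)$ with $g$ a generator of $\mathbb{Z}_p$, the element $s^{\omega+1}$ lies in the maximal subgroup of $S$ at $s^\omega$, and the cyclic subgroup $\langle s^{\omega+1}\rangle$ maps under $\varphi$ onto a nontrivial subgroup of the simple group $\mathbb{Z}_p$, hence onto all of $\mathbb{Z}_p$. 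Since any finite cyclic group surjecting onto $\mathbb{Z}_p$ has $\mathbb{Z}_p$ as a subgroup, $\mathbb{Z}_p\le S$.

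For the non-lifting of $N_2$ I exhibit $T=\{a,e,g\}$ where $\{e,g\}\cong\mathbb{Z}_2$ ($e$ the identity, $g^2=e$) is declared the kernel of $T$ and where $a^2=e$, $ae=ea=g$, $ag=ga=e$. A short case check verifies associativity, and the projection $\varphi$ collapsing $\{e,g\}$ is a surjective homomorphism onto $N_2$. The two-element subsets $\{a,e\}$ and $\{a,g\}$ fail to be closed under multiplication, while $\{e,g\}\cong\mathbb{Z}_2$ is not isomorphic to $N_2$, so $N_2$ does not embed in $T$.

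For part (b), the very-smallness of $(\{0,1\})$ is cited directly from \cite[Theorem~2.4]{SLsmall}. For $(N_2)$, suppose $N_2\notin(S)$ and $(S)<(T)\le (S)\vee(N_2)$, and choose $U\in (T)\setminus (S)$. Writing $U$ as a quotient of a subsemigroup $W\le S^n\times N_2^m$, the fact that $N_2^m$ is null yields $W^2\subseteq S^n\times\{0\}$, so $W^2$ is isomorphic to a subsemigroup of $S^n$, placing $W^2$ and hence also its homomorphic image $U^2$ in $(S)$. Since $U\notin(S)$, $U^2\subsetneq U$, so the Rees quotient $U/U^2$ is a nontrivial null semigroup and therefore contains $N_2$; thus $N_2\in (U/U^2)\le (T)$, forcing $(T)\ge (S)\vee (N_2)$ and equality. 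For the non-very-smallness of $\mathbf{2}^l$, $\mathbf{2}^r$, and $\mathbb{Z}_p$, I exhibit for each atom $a$ a specific finite semigroup $S$ and a strict intermediate pseudovariety $(T)$ in $[(S),(S)\vee(a)]$: for $\mathbf{2}^r$ and $\mathbf{2}^l$ these arise within the band pseudovariety lattice of Fennemore and Gerhard, and for $\mathbb{Z}_p$ they arise from suitable non-group semigroups $S$ whose product with $\mathbb{Z}_p$ admits a finer sublattice of pseudovarieties.

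Combining (a) and (b), only $(\{0,1\})$ both lifts and is very small, and Proposition~\ref{nn:131} applied to $a=\{0,1\}$ then yields that $\V\mapsto\V\vee(\{0,1\})$ is an atom of $\GMC^+$. The main obstacle in this plan is the explicit construction of intermediate pseudovarieties witnessing non-very-smallness for $\mathbf{2}^l,\mathbf{2}^r,\mathbb{Z}_p$; in contrast, the Rees-quotient argument for $(N_2)$ and the $N_2$-counterexample, together with the three short lifting arguments, are quite clean.
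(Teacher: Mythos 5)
Your part (a) and the first half of part (b) are essentially sound: the three lifting arguments are correct, your three-element counterexample for $N_2$ is the paper's $\langle y\mid y^2=y^4\rangle$ in another guise, the semilattice case is cited exactly as in the paper, and your Rees-quotient proof that $(N_2)$ is very small is correct and in fact cleaner than the paper's argument, which first reduces to completely regular $S$ via $\CR=\Excl(N_2)$ and then uses idempotent powers in the power semigroup; your version needs neither step. The final deduction of the $\GMC^+$ atom via Proposition~\ref{nn:131} only uses that $\{0,1\}$ lifts and is very small, so that part stands.

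The genuine gap is the assertion that $\mathbf{2}^l$, $\mathbf{2}^r$ and $\Z_p$ are \emph{not} very small: you exhibit no witnessing semigroup $S$ and no intermediate pseudovariety, and you yourself flag this as the main obstacle. Worse, the proposed source of witnesses for $\mathbf{2}^r$ and $\mathbf{2}^l$ cannot work. The lattice of band varieties is distributive (Birjukov--Fennemore--Gerhard), so the atom generated by $\mathbf{2}^r$ is join-prime there and the finite bands with no divisor $\mathbf{2}^r$ form a pseudovariety; if $B$ is any finite band with $\mathbf{2}^r\notin(B)$ and $(B)\leq\W\leq(B)\vee(\mathbf{2}^r)$, then either $\mathbf{2}^r\in\W$, forcing $\W=(B)\vee(\mathbf{2}^r)$, or every member of $\W$ generates a band variety omitting right zeros, and a short distributivity computation puts it inside $(B)$. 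Hence $(B)\vee(\mathbf{2}^r)$ always covers $(B)$ for bands, and no band can witness failure of very smallness; the witnesses must lie outside bands. The paper takes the nilpotent semigroup $S=\langle x\mid x^3=0\rangle$ and shows $\bigl((S\times\mathbf{2}^r)/(0\times\mathbf{2}^r)\bigr)$ is strictly intermediate (it is noncommutative, so strictly above $(S)$, yet nilpotent, so it excludes $\mathbf{2}^r$ and is strictly below the join), with the dual for $\mathbf{2}^l$. For $\Z_p$ your plan is only a phrase; the paper's argument requires the specific $13$-element relatively free semigroup $S$ on two generators of the variety defined by $x_1x_2x_3x_4=0$ and $x^2y=xy^2$, together with the verification that $\bigl((S\times\Z_p)/(0\times\Z_p)\bigr)$ is exactly the pseudovariety defined by $x_1x_2x_3x_4=0$ (its images of $(a,g)$ and $(b,g^2)$ freely generate the $2$-generated free object), which sits strictly between $(S)$ and $(S)\vee(\Z_p)$. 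Without explicit constructions of this kind, the non-very-smallness claims, and hence part (b) as stated, remain unproven.
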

\begin{proof}
We first prove (a). It is easy to show that $2^l,2^r,\{0,1\}$ are projective and hence lift~\cite[Lemma~4.1.39]{qtheor}. The group $\Z_p$ lifts because if $\varphi\colon  S\rightarrow \Z_p$ is a surjective homomorphism, then  there exists a subgroup $G\leq S$ mapping onto $\Z_p$.  But then $p$ divides $|G|$ and so, by Cauchy's Theorem, $\Z_p\leq G$.  Thus $\Z_p$ lifts (but it is not projective as the canonical map $\Z_{p^2}\to \Z_p$ does not split).

The homomorphism $\varphi\colon \langle y\mid y^2=y^4\rangle\rightarrow N_2$, $y\mapsto n$, $y^2,y^3\mapsto 0$, shows $N_2$ does not lift.

Now we turn to (b).  It is proved in~\cite[Theorem~2.4]{SLsmall} that the pseudovariety of semilattices is very small.  We now prove that $(N_2)$ is very small. It suffices to show if $S$ is completely regular (since $\mathbf{CR}=\Excl(N_2)$; see \cite[Table~7.2, Page~469]{qtheor}) and
$(S)\leq \W < (S)\vee(N_2)=(S\times N_2)$
%\comm{$\W\subseteq\CR$?},
then $\W\subseteq(S)$. If $N_2\in\W$, then $\W=(S)\vee(N_2)$, hence a$N_2\notin\W$. Thus $\W\subseteq \CR$.
Well, $T$, a member of  $\W$ contained in $\CR$, implies $T$ divides  $S_1\times N_2^m$ with $S_1\in(S)$ (and hence $S_1\in \CR$). Let $U^{\omega}$ be the idempotent power of a semigroup $U$ (viewed as an of the power semigroup $P(U)$).  Since $S$ and $T$ are completely regular, we have  $T=T^\omega$ divides $(S_1\times N_2^m)^{\omega}=  S_1\times(0)\cong S_1$. Thus $T\in (S)$ and we are done.

%Next we prove $((\{0,1\}),\cdot)=\Sl$ is not very small.

 Next we show that $2^l,2^r,\Z_p$ are not very small. The idea of this and the following proofs is that if $N$ is nilpotent (i.e., there exists $k$ such that $N^k=0$) then, for any finite semigroup $S$, $(N\times S)/(0\times S)$ is also nilpotent, and has a surjective morphism onto $N$ induced by $(n,s)\mapsto n$. Thus $(N\times S)=(N)\vee(S)$ ``can grow'' larger nilpotents (even if $S\in \CR$ \cite{ABR}).

First  we show $2^r$ is not very small. It is easy to see that if $S=\langle x\mid x^3=0\rangle$ then
\[ (S)<\left(\frac{S\times 2^r}{0\times 2^r}\right)<(S\times 2^r)=(S)\vee(2^r), \]
where the first inequality is strict because $(S\times 2^r)/(0\times 2^r)$ is not commutative as $(x,a),(x,b)$ do not commute.
The second inequality is strict because $(S\times 2^r)/(0\times 2^r)$ is nilpotent. The dual argument shows that $2^l$ is not very small.

Now we prove that  $\Z_p$ is not very small.
%\begin{proof}
This should be considered joint work with M.~Sapir. Let $S$  be the free semigroup on $a,b$ in the variety defined by the identities $x_1x_2x_3x_4=0, x^2y=xy^2$.  Routine computations shows that $|S|=13$ and $S=\{a,b,a^2,ab,ba,b^2,a^2b=ab^2, b^a=ba^2,a^3,aba,bab,b^3,0\}$.
Now consider $G=\Z_p=\langle g\rangle$.
\[ (S)<\left(\frac{S\times G}{0\times G}\right)<(S\times G)=(S)\vee(G) \]
The center term is nilpotent so the second inequality follows. The center term satisfies $x_1x_2x_3x_4=0$, but not $x^2y=xy^2$ since it does not hold in $G$. In detail,
let us substitute $(a,g)$ for $x$ and $(b,g^2)$ for $y$, then
\begin{align*}
(a,g)(a,g)(b,g^2)&=(a^2b=ab^2,g^4)\\
(a,g)(b,g^2)(b,g^2)&=(ab^2=a^2b,g^5)
\end{align*}
 and $g^4\neq g^5$ in $\Z_p$ any $p$. In fact the elements $\bar{a}=(a,g)$, $\bar{b}=(b,g^2)$ in $(S\times G)/(0\times G)$  freely generate a relatively free semigroup in the variety $x_1x_2x_3x_4=0$.  This variety is clearly generated by its free object on two generators and so
\[ \left(\frac{S\times G}{0\times G}\right) = \ldbrack x_1x_2x_3x_4=0\rdbrack\]
Thus $\Z_p$ is not very small. This finishes the proof of b) and hence of Theorem~\ref{thm:132}.
%\end{proof}
\end{proof}

%\comm{Not sure if this is the right place for these, or at the end.}
We note some further open questions regarding the atoms of $\GMC^+$.
\begin{itemize}
\item[(a)] One should check that none of the $\V\rightarrow \V\vee(a)$ are atoms of $\GMC^+$ with $(a)$ an atom of $\PV$ except $a = \{0,1\}$.
\item[(b)] Conjecture: $\Atoms(\GMC^+)=\V$ to $\V \vee (\{0,1\}, \cdot )$.
\end{itemize}

Using the same idea we construct some $\smi$ members of $\PV$ which are not $\mi$, a question posed in~\cite[page~471]{qtheor}.
The following is an extension of joint work with M.~Sapir which considered the two variable case.  Throughout, we use boldface letters (typically, $\bw,\bu,\bv$, sometimes with subscripts) to denote words, and standard lower case letters (typically, $x,y, z$, sometimes with subscripts) to denote letters appearing in words.  The symbol $\equiv$ is used to denote equality between words.  So, $\bw\equiv xyx$ denotes the fact that the word $\bw$ is $xyx$, while $\bw=xyx$ denotes a formal equality that may not hold in the variety of all semigroups (such as if $\bw\equiv xy$ for example).  We use $\con(\bw)$ to denote the \emph{content} of $\bw$: the alphabet of letters appearing in $\bw$.

\begin{proposition}
\label{prop:133}
Consider words $\bw_1\neq \bw_2$, with $\con(\bw_1)=\con(\bw_2)=\{x_1,\dots,x_k\}$ and $|\bw_1|=|\bw_2|=n\geq k>1$. Then the pseudovariety
$\ldbrack \bw_1=\bw_2 \rdbrack$ is $\smi$ but not $\mi$ and has as unique cover
$$\ldbrack \bw_1=\bw_2 \rdbrack \vee \ldbrack T_{\bw_1,\bw_2},  x_1\cdots x_{n+1}=0\rdbrack$$
where $T_{\bw_1,\bw_2}$ consists of all equations $\theta(\bw_1)=\theta(\bw_2)$ for which
$\theta:\{x_1,\dots,x_k\}\to\{x_1,\dots,x_k\}$ has $|\theta(\{x_1,\dots,x_k\})|<k$.
\end{proposition}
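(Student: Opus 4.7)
Let $\V := \ldbrack \bw_1 = \bw_2 \rdbrack$ and $\W_0 := \ldbrack T_{\bw_1,\bw_2}, x_1 \cdots x_{n+1}=0 \rdbrack$; write $\V^+ := \V \vee \W_0$. The plan is to establish (i) $\V^+ > \V$, (ii) every $\W \in \PV$ with $\W > \V$ contains $\V^+$ (so $\V^+$ is the unique cover of $\V$ and hence $\V$ is $\smi$), and (iii) $\V$ is not $\mi$.

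For (i), I would examine the relatively free semigroup $A_k$ of $\W_0$ on $k$ generators $x_1,\dots,x_k$, a finite nilpotent semigroup whose nonzero elements are equivalence classes of words of length $\leq n$ modulo the congruence generated by $T_{\bw_1,\bw_2}$. The key length-budget observation is that substituting any $x_i$ by a word of length $\geq 2$ inside a defining relation pushes both sides strictly above length $n$, where everything becomes $0$; thus the only usable rewrites among length-$n$ words are whole-word replacements $\phi(\bw_1) \leftrightarrow \phi(\bw_2)$ for non-injective letter substitutions $\phi$. The sources and targets of such rewrites have content strictly smaller than $\{x_1,\dots,x_k\}$, while $\bw_1$ and $\bw_2$ have full content, so no rewrite applies to either. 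Therefore $\bw_1 \neq \bw_2$ in $A_k$, so $A_k \in \W_0 \setminus \V$ and $\V^+ > \V$.

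For (ii), I would show the dual statement $\mathrm{eq}(\W) \subseteq \mathrm{eq}(\W_0)$: every identity of $\W$ follows from $T_{\bw_1,\bw_2}$ together with $x_1 \cdots x_{n+1}=0$. Since $\W \supseteq \V$ and every consequence of $\bw_1=\bw_2$ is length-balanced (both sides grow the same under any substitution), all identities in $\mathrm{eq}(\W)$ are length-balanced. By the same length-budget argument as in (i), the length-$\leq n$ consequences of $\bw_1=\bw_2$ in a free semigroup are exactly the letter-substitution instances $\phi(\bw_1)=\phi(\bw_2)$. Because $\W$ fails $\bw_1=\bw_2$, relabeling a witness substitution shows $\W$ also fails every injective $\phi(\bw_1)=\phi(\bw_2)$; thus the nontrivial length-$\leq n$ identities surviving in $\mathrm{eq}(\W)$ have $\phi$ non-injective and therefore lie in $T_{\bw_1,\bw_2}$. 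Identities in $\mathrm{eq}(\W)$ of length $\geq n+1$ are immediate consequences of $x_1\cdots x_{n+1}=0$, both sides becoming $0$. Combining these cases gives $\mathrm{eq}(\W) \subseteq \mathrm{eq}(\W_0)$, hence $\W \supseteq \V^+$.

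For (iii), I would exhibit two pseudovarieties $(S_1), (S_2)$ each with $(S_i) \not\leq \V$ but whose meet $(S_1) \wedge (S_2)$ is trivial, hence strictly below $\V$. Concretely, I would choose finite semigroups $S_1,S_2$ each falsifying the nontrivial identity $\bw_1=\bw_2$ (nontrivial since $n \geq k > 1$) with no nontrivial common divisor; for example, take $S_1$ a non-abelian $p$-group and $S_2$ a non-abelian $q$-group with distinct primes $p\neq q$, so that $(S_1)\cap (S_2)$ is trivial. Then $\V \geq (S_1)\wedge(S_2)$ with no $(S_i) \leq \V$, so $\V$ is not $\mi$. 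The main technical point throughout is the length-budget observation of (i) and (ii): substituting a variable by a word of length $\geq 2$ strictly increases identity length, so whole-word letter substitutions exhaust the length-$\leq n$ consequences of $\bw_1=\bw_2$. Once this is pinned down, the cover description in (i)--(ii) and the failure of $\mi$ in (iii) both follow.
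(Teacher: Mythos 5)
Your step (ii) has a genuine logical gap: from $\mathrm{eq}(\W)\subseteq\mathrm{eq}(\W_0)$ you conclude $\W\supseteq\W_0$, but a pseudovariety is not determined by the identities it satisfies, so this inference is invalid. For instance, the pseudovariety $\G$ of finite groups satisfies only trivial identities (the free semigroup embeds in the free group, which is residually finite), so $\mathrm{eq}(\G)\subseteq\mathrm{eq}(\W_0)$, yet $\G\not\supseteq\W_0$; similarly an arbitrary $\W>\ldbrack\bw_1=\bw_2\rdbrack$, such as $\ldbrack\bw_1=\bw_2\rdbrack\vee\G$, satisfies essentially no identities at all, and knowing $\mathrm{eq}(\W)$ tells you nothing about which finite semigroups lie in $\W$. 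The whole difficulty of the proposition is exactly at this point, and the paper meets it with a construction your proposal never makes: taking $T\in\W$ with a tuple $(t_1,\dots,t_k)$ witnessing the failure of $\bw_1=\bw_2$, forming $\widetilde S=(S\times T^{k!})/(0\times T^{k!})\in\W$ (where $S$ is the $k$-generated relatively free semigroup for $\bw_1=\bw_2$ with the nilpotency law), and considering the subsemigroup $F$ generated by elements $\bar a_i$ whose $S$-coordinate is $x_i$ and whose $T$-coordinates run over all permutations of the witness tuple; one then verifies that the $k$-generated relatively free semigroup $N_{n+1}^\flat$ of $\W_0$ is a homomorphic image of $F$, hence lies in $\W$. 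Without some such explicit membership argument, step (ii) does not go through, and your syntactic ``length-budget'' analysis, however correct, cannot substitute for it.

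There are also two smaller problems. First, your claim that every consequence of $\bw_1=\bw_2$ is length-balanced is false: the sides have equal length but may have different multiplicities of individual letters (e.g.\ $xyx=xyy$), so substituting longer words for letters produces consequences whose sides have different lengths; relatedly, your case analysis in (ii) omits identities with one side of length at most $n$ and the other longer. Second, in (iii) the specific choice of non-abelian $p$- and $q$-groups is not justified: a non-abelian group can satisfy the given identity (e.g.\ $Q_8$ satisfies $x^2y=yx^2$), so you must argue existence of finite groups (or other semigroups with trivial common divisors) failing $\bw_1=\bw_2$, say via residual finiteness of free groups; the paper instead simply invokes that every $\mi$ pseudovariety contains $\G$ or $\Ap$, which satisfy no nontrivial identities. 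Your parts (i) and (iii) are repairable along these lines, but (ii) as written is not a proof.
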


An immediate corollary is the following result, which appears in~\cite{qtheor}.

\begin{corollary}
The pseudovariety $\Com=\ldbrack xy=yx\rdbrack$ is $\smi$ but not $\mi$, with unique cover $\Com\vee\ldbrack x_1x_2x_3=0\rdbrack$.
\end{corollary}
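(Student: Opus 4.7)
The plan is to obtain this corollary as a direct specialization of Proposition~\ref{prop:133}. I would take $\bw_1\equiv xy$ and $\bw_2\equiv yx$, so that $\con(\bw_1)=\con(\bw_2)=\{x_1,x_2\}=\{x,y\}$ and $n=|\bw_1|=|\bw_2|=2=k$. Thus the hypotheses $n\geq k>1$ are satisfied (with equality $n=k=2$), and Proposition~\ref{prop:133} applies to yield that $\Com=\ldbrack xy=yx\rdbrack$ is $\smi$ but not $\mi$, with unique cover
\[\Com\vee\ldbrack T_{xy,yx},\ x_1x_2x_3=0\rdbrack,\]
since $n+1=3$.

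The only thing that remains is to check that $T_{xy,yx}$ contributes no nontrivial identity, so the cover collapses to the form stated in the corollary. By the definition of $T_{\bw_1,\bw_2}$ in the proposition, one runs through the substitutions $\theta\colon\{x,y\}\to\{x,y\}$ with $|\theta(\{x,y\})|<2$, i.e., the constant maps. If $\theta(x)=\theta(y)=z$ for some $z\in\{x,y\}$, then $\theta(xy)\equiv z^2\equiv\theta(yx)$, so the identity $\theta(\bw_1)=\theta(\bw_2)$ is the trivial identity $z^2=z^2$. Hence $T_{xy,yx}$ imposes no nontrivial relation, and
\[\ldbrack T_{xy,yx},\ x_1x_2x_3=0\rdbrack=\ldbrack x_1x_2x_3=0\rdbrack,\]
yielding the claimed unique cover $\Com\vee\ldbrack x_1x_2x_3=0\rdbrack$.

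No serious obstacle is expected: once Proposition~\ref{prop:133} is in hand, the verification is purely a matter of unwinding the notation. The only minor point worth being explicit about is the degenerate case $n=k$, which the hypothesis $n\geq k$ permits; this is precisely why $T_{xy,yx}$ becomes vacuous, since every non-injective $\theta$ on a two-letter alphabet is constant.
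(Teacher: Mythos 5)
Your proposal is correct and is exactly the specialization the paper intends: the paper presents this as an immediate consequence of Proposition~\ref{prop:133} with $\bw_1\equiv xy$, $\bw_2\equiv yx$, $n=k=2$, and your observation that $T_{xy,yx}$ is vacuous (all non-injective substitutions on two letters are constant, giving only trivial identities) is the only detail needing to be checked.
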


\begin{proof}[Proof of Proposition~\ref{prop:133}]
Let $N_{n+1}$ denote the free semigroup on $k$ generators in the variety defined by $x_1\cdots x_{n+1}=0$.  The elements of $N_{n+1}$ are $0$, along with each word in the alphabet $\{x_1,\dots,x_k\}$ of length at most $n$.  Let $N_{n+1}^\flat$ denote the quotient of $N_{n+1}$ by the fully invariant congruence $\rho$ corresponding to the equations in $T_{\bw_1,\bw_2}$.  Note that if $\bu=\bv$ is an equation in $T_{\bw_1,\bw_2}$, and $\theta$ is any substitution, then $\con(\theta(\bu))=\con(\theta(\bv))$ and either $|\con(\theta(\bu))|<k$ or $|\theta(\bu)|>n$.  Hence (as $x_1\cdots x_{n+1}=0$ already holds) the only nontrivial relations in $\rho$ are those corresponding to the transitive closure of the equalities in $T_{\bw_1,\bw_2}$.  We now observe that $N_{n+1}^\flat$ generates the pseudovariety $\ldbrack T_{\bw_1,\bw_2}, x_1\cdots x_{n+1}=0\rdbrack$, which therefore is compact.

To see this, consider an identity $\bu=\bv$ failing in the variety defined by $T_{\bw_1,\bw_2}\cup\{x_1\cdots x_{n+1}=0\}$.  If $\bu,\bv$ are two words of different length, they can be distinguished in the free object on $\{x_1\}$ by sending all letters to $x_1$.  If $\bu,\bv$ have the same length strictly less than $n$ then find a position in which the letter appearing in $\bu$ is distinct from that appearing in $\bv$; say, $x$ appears at the $i$th position of $\bu$ and $y$ appears at the $i$th position of $\bv$.  Take any substitution from $\con(\bu\bv)$ into $\{x_1,\dots,x_k\}$ that separates $x$ from $y$.  Then this witnesses failure of $\bu=\bv$ on $N_{n+1}^\flat$ because distinct products in $\{x_1,\dots,x_k\}$ of length less than $n$ are distinct in $N_{n+1}^\flat$.  The remaining case is where $\bu,\bv$ both have length $n$.  If one of $\bu$ or $\bv$ has at least $k$ variables (say, $\bu$), then again select a position where $\bu$ and $\bv$ differ, and select an assignment $\theta$ mapping $\con(\bu)$ \emph{onto} $\{x_1,\dots,x_k\}$ and which separates the letters in this position.
Then $\theta(\bu)$ involves all $k$ letters and has length $n$, and hence is distinct in $N_{n+1}^\flat$ from every other word in $\{x_1,\dots,x_k\}^*$, and in particular, to $\theta(\bv)$.  So finally, assume that $\bu$ and $\bv$ have length $n$ and both involve fewer than $k$ letters.  But then $N_{n+1}^\flat$ fails $\bu=\bv$ because it is free, on $k$ free generators.

Now let $S$ denote the quotient of $N_{n+1}$ by the fully invariant congruence generated by $\bw_1=\bw_2$.  Because $T_{\bw_1,\bw_2}$ already accounted for all consequences of $\bw_1=\bw_2$ in fewer than $k$ variables (and there were none of length less than $n$), the semigroup $S$ differs from $N_{n+1}$ only amongst those words of length $n$ and in exactly $k$ variables.
Of course, $S\in\ldbrack \bw_1=\bw_2 \rdbrack$.
Now assume $\V\in\PV$, with $\V>\ldbrack \bw_1=\bw_2 \rdbrack$.   We show that $N_{n+1}^\flat \in\V$.

Now, there must be $T\in \V$ not satisfying $\bw_1=\bw_2$.  So there exists $t_1,\dots,t_k\in T$ with $\bw_1(t_1,\dots,t_k)\neq \bw_2(t_1,\dots,t_k)$.
Consider $\widetilde{S}=(S\times T^{k!})/(0\times T^{k!})$ which is a member of $\V$ because $S$ and $T$ are.  Fix an enumeration $\pi_1,\dots,\pi_{k!}$ of the permutations of $\{1,\dots,k\}$ and consider the subsemigroup $F$ of $\widetilde{S}$ generated by the elements $\bar{a_1},\dots,\bar{a_k}$ defined as follows. The value of $\bar{a_i}$ in the $S$ coordinate is $x_i$.  At the $j$th $T$ coordinate, $\bar{a_i}$ is $t_{i\pi_j}$.

We show that $N_{n+1}^\flat$ is a quotient of $F$.  Now, $F$  is $k$-generated and $n+1$-nilpotent, so it is a homomorphic image of $N_{n+1}$ under some homomorphism $\eta$ mapping the free generators by $x_i\mapsto \bar{a_i}$.  We need to show that $\ker(\eta)\subseteq \rho$ (the fully invariant congruence on $N_{n+1}$ yielding $N_{n+1}^\flat$).  The projection from $S\times T^{k!}$ induces a surjective homomorphism $\widetilde{S}\to S$ whose restriction to $F$ is surjective, and moreover maps $\bar{a_i}\mapsto x_i$ for each $i$.  Thus if $\bu$ and $\bv$ are words in $x_1,\dots,x_k$ that represent distinct elements of $S$, then $\bu(\bar{a_1},\dots,\bar{a_k})\neq \bv(\bar{a_1},\dots,\bar{a_k})$ in $F$ also.  Because $S$ differs from $N_{n+1}^\flat$ only on words of length $n$ involving all $k$ letters, to show $\ker(\eta)\subseteq \rho$ it suffices to show that distinct words $\bu$ and $\bv$  of length $n$ and with $\con(\bu)=\con(\bv)=\{x_1,\dots,x_k\}$ have $\bu(\bar{a_1},\dots,\bar{a_k})\neq \bv(\bar{a_1},\dots,\bar{a_k})$.  This is true already if $\bu=\bv$ fails on $S$.  So assume that $\bu=\bv$ holds in $\ldbrack \bw_1=\bw_2 \rdbrack$.  In this case there is a permutation $\pi$ of $\{1,\dots,k\}$ with $\bu(x_1,\dots,x_k)=\bw_1(x_{1\pi},\dots,x_{k\pi})$ and $\bv(x_1,\dots,x_k)=\bw_2(x_{1\pi},\dots,x_{k\pi})$ or vice versa.  Then  $\bu(\bar{a_1},\dots,\bar{a_k})$ differs from $\bv(\bar{a_1},\dots,\bar{a_k})$ on the coordinate corresponding to $\pi^{-1}$.  Thus $\ker(\eta)\subseteq \rho$, and $N_{n+1}^\flat$ is a homomorphic image of $F$.  Hence $N_{n+1}^\flat\in \V$ as claimed.

This proves $\ldbrack \bw_1=\bw_2 \rdbrack$ is $\smi$. It cannot be $\mi$, since no $\mi$ satisfies an identity since each $\mi$ pseudovariety must contain $\G$ or $\Ap$ and these  satisfy no identities. This proves Proposition~\ref{prop:133}.
\end{proof}

The following proposition is well known.

\begin{proposition}\label{prop:notlocallyfinite}
Let $E$ be a set of identities over an alphabet $A$.  Then the \textbf{pseudovariety} $\ldbrack E\rdbrack$ is locally finite if and only if there are no infinite, finitely generated, residually finite semigroups in the \up(Birkhoff\up) \textbf{variety} $\ldbrack E\rdbrack$.
\end{proposition}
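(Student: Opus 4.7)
The plan is to prove both implications by contrapositive, using the standard interplay between finite quotients, relatively free objects, and residual finiteness. Let $\V := \ldbrack E\rdbrack$ denote the pseudovariety and $\W := \ldbrack E\rdbrack$ the Birkhoff variety; thus $\V$ is exactly the class of finite members of $\W$. Recall that $\V$ locally finite means that for every $k$, the $k$-generated semigroups in $\V$ have cardinality bounded by some $B(k)$, equivalently, there are only finitely many of them up to isomorphism.

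For the forward direction ($\V$ locally finite implies no infinite finitely generated residually finite $S\in\W$), suppose $S\in\W$ is generated by $s_1,\dots,s_k$ and is residually finite. Every finite quotient $T$ of $S$ lies in $\W$ (varieties are closed under quotients), is $k$-generated, and therefore lies in $\V$. By local finiteness, there are only finitely many isomorphism types of such $T$, each with $|T|\leq B(k)$. Because a homomorphism from a $k$-generated semigroup is determined by the images of its generators, there are at most $B(k)^k$ homomorphisms $S\to T$ for any given $T$; hence the collection of finite quotients of $S$ is finite. As $S$ is residually finite, the product map from $S$ into the direct product of its finite quotients is injective. Being a subsemigroup of a finite product of finite semigroups, $S$ must be finite.

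For the converse, suppose $\V$ is not locally finite, so that for some $k$ there is a sequence of $k$-generated semigroups $T_i\in\V$ with $|T_i|\to\infty$. Let $F$ be the relatively free semigroup on $k$ generators in $\W$, that is, the quotient of the free semigroup on $k$ letters by the fully invariant congruence generated by $E$; by construction $F\in\W$. Define $R := F/\rho$, where $\rho$ is the intersection of the kernels of all homomorphisms from $F$ into finite members of $\V$. Then $R\in\W$ (as a quotient of $F\in\W$), is $k$-generated (by the images of the free generators), and is residually finite by construction. Moreover, each $T_i$ is a $k$-generated quotient of $F$, so the corresponding surjection $F\twoheadrightarrow T_i$ factors through $R$; hence $|R|\geq |T_i|$ for every $i$, forcing $R$ to be infinite. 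This yields the desired infinite, finitely generated, residually finite member of $\W$.

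The argument poses no real technical obstacle; it is a standard application of residual finiteness together with the universal property of the relatively free object. The only subtle point to verify is that $R$ as defined really does have every finite $k$-generated member of $\V$ as a quotient, which follows from $R$ being, essentially, the maximal residually finite quotient of $F$ among those whose finite quotients lie in $\V$.
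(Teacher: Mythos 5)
Your proof is correct. The forward direction is the paper's argument stated contrapositively: you show that local finiteness plus residual finiteness leaves only finitely many finite-index congruences, forcing $S$ to be finite, whereas the paper says directly that an infinite, finitely generated, residually finite $S$ has arbitrarily large finite quotients lying in the pseudovariety; these are the same computation. For the converse the paper is briefer but relies on profinite machinery: it takes the free pro-$\ldbrack E\rdbrack$ semigroup $\widehat F$ on a finite alphabet, notes it is infinite when the pseudovariety is not locally finite, and observes that the abstract subsemigroup generated by the alphabet is infinite, finitely generated, residually finite and satisfies $E$. Your $R=F/\rho$, the quotient of the relatively free semigroup by the intersection of the kernels of all maps to finite members, is exactly that subsemigroup, constructed purely algebraically; you then verify infiniteness by hand by factoring the surjections $F\twoheadrightarrow T_i$ through $R$, so $|R|\ge|T_i|\to\infty$. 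In short, both converses produce the same object (the maximal residually finite $k$-generated free object); yours is more elementary and self-contained, while the paper's is shorter because it quotes standard facts about free profinite semigroups (that $\widehat F$ is infinite in this case and that its dense finitely generated subsemigroup is residually finite).
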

\begin{proof}
Suppose first that $\ldbrack E\rdbrack$ contains an infinite, finitely generated, residually finite semigroup $S$.  Let $A$ be a finite generating set for $S$.  Then $S$ has finite quotients of arbitrarily large size, all of which belong to the pseudovariety $\ldbrack E\rdbrack$.  Thus $\ldbrack E\rdbrack$ cannot be locally finite.  Conversely if $\ldbrack E\rdbrack$ is not locally finite, then there is a finite alphabet $A$ such that the free pro-$\ldbrack E\rdbrack$ semigroup $\widehat F$ on $A$ is infinite.  The abstract subsemigroup $S$ of $\widehat F$ generated by $A$ is then an infinite $A$-generated residually finite semigroup in the variety $\ldbrack E\rdbrack$.
\end{proof}

Recall that an identity $\bw_1=\bw_2$ over an alphabet $A$ is \emph{balanced} if the number of occurrences in each letter in $A$ is the same in both $\bw_1$ and $\bw_2$. In this case, $(\mathbb N,+)$ satisfies the identity $\bw_1=\bw_2$ and since $\mathbb N$ is residually finite, it follows from the above proposition that $\ldbrack \bw_1=\bw_2\rdbrack$ is not locally finite and hence not compact.  Thus we have the following proposition.

\begin{proposition}\label{prop:balancedcase}
If $\bw_1=\bw_2$ is a balanced identity satisfying the properties of Proposition~\ref{prop:133}, then $\ldbrack \bw_1=\bw_2\rdbrack$ is a non-locally finite $\smi$, and hence, in particular, is not compact.
\end{proposition}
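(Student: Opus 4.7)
The plan is to combine Proposition~\ref{prop:133} (giving $\smi$) with Proposition~\ref{prop:notlocallyfinite} (translating non-local-finiteness into the existence of a witness semigroup) via the semigroup $(\mathbb{N},+)$.

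First, Proposition~\ref{prop:133} already tells us that $\ldbrack \bw_1=\bw_2\rdbrack$ is $\smi$, so only the non-local-finiteness claim needs work. For this, I would exhibit $(\mathbb{N},+)$ as the required infinite, finitely generated, residually finite member of the Birkhoff variety $\ldbrack \bw_1=\bw_2\rdbrack$. Membership is immediate from balance: since $\bw_1$ and $\bw_2$ have the same number of occurrences of each letter $x_i$, substituting any values $n_1,\dots,n_k\in\mathbb{N}$ yields the same integer $\sum_{i=1}^k c_i n_i$ on both sides, where $c_i$ is the common number of occurrences of $x_i$. That $(\mathbb N,+)$ is infinite and generated by $\{1\}$ is clear; residual finiteness follows from the fact that distinct positive integers $m\neq m'$ are separated by the projection $\mathbb{N}\twoheadrightarrow \mathbb{Z}/N\mathbb{Z}$ for any $N>\max\{m,m'\}$, and $\mathbb{Z}/N\mathbb{Z}$ is finite. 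Proposition~\ref{prop:notlocallyfinite} then yields that $\ldbrack \bw_1=\bw_2\rdbrack$ is not locally finite.

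Finally, to deduce that $\ldbrack \bw_1=\bw_2\rdbrack$ is not compact, I would recall that the compact elements of $\PV$ are exactly the pseudovarieties $(T)$ generated by a single finite semigroup $T$. Any such $(T)$ is locally finite: the free pro-$(T)$ semigroup on a finite set $A$ embeds into $T^{T^A}$, which is finite. A non-locally-finite pseudovariety therefore cannot be compact, completing the proof.

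The only conceptual step is the balance/$(\mathbb{N},+)$ observation; everything else is bookkeeping against Propositions~\ref{prop:133} and~\ref{prop:notlocallyfinite}. I anticipate no genuine obstacle, since the hypothesis that $\bw_1\neq \bw_2$ (ensuring the identity is nontrivial) is not even needed for the non-local-finiteness argument—it was already used in Proposition~\ref{prop:133} to guarantee the $\smi$ conclusion.
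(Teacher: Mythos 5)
Your proposal is correct and follows essentially the same route as the paper: the paper also observes that balance forces $(\mathbb{N},+)$ to satisfy $\bw_1=\bw_2$, invokes its residual finiteness together with Proposition~\ref{prop:notlocallyfinite} to rule out local finiteness (hence compactness), and gets $\smi$ from Proposition~\ref{prop:133}. The extra details you supply (the congruences $\mathbb{Z}/N\mathbb{Z}$ and the fact that compact pseudovarieties are locally finite) are just the bookkeeping the paper leaves implicit.
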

Recall that a word $\bw$ is \emph{avoidable} if there is a finite alphabet $A$ and an infinite factorial subset of $A^*$ avoiding   $\bw\theta$ for every $\theta:\con(\bw)^*\to A^*$; equivalently there is a right infinite word $\bx\in A^\mathbb{N}$ avoiding $\bw\theta$ for every $\theta:\con(\bw)^*\to A^*$.  The word $\bw$ is \emph{unavoidable} if it is not avoidable.  Recall the \emph{Zimin words}, which are defined inductively by $\bz_1=x_1$, $\bz_{n+1}=\bz_nx_{n+1}\bz_n$.  It is known that a word $\bw$ is unavoidable if and only if there is a substitution $\theta$ with $\theta(\bw)\leq \bz_n$ for some $n$; see Bean, Ehrenfeucht, McNulty \cite{BEM}, Zimin \cite{Zim} or Lothaire \cite{lothaire}.
\begin{proposition}\label{prop:unbalancedcase}
Suppose that $\bw_1,\bw_2\in \{x_1,\dots,x_k\}^+$ are both avoidable words.  Then the pseudovariety $\ldbrack \bw_1=\bw_2\rdbrack$ is not locally finite and hence not compact.
\end{proposition}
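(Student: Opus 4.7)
The plan is to apply Proposition~\ref{prop:notlocallyfinite}: it suffices to exhibit an infinite, finitely generated, residually finite semigroup $S$ lying in the Birkhoff variety $\ldbrack \bw_1 = \bw_2 \rdbrack$. I will construct $S$ as a ``factor semigroup with zero'' built on a right-infinite word that simultaneously avoids both patterns.

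The first step is to invoke simultaneous avoidability: since each of $\bw_1$ and $\bw_2$ is individually avoidable, there exist a finite alphabet $A$ and a right-infinite word $\bx \in A^{\mathbb{N}}$ such that for every substitution $\theta\colon \{x_1,\dots,x_k\}^+ \to A^+$ and every $j \in \{1,2\}$ the word $\theta(\bw_j)$ does not occur as a factor of $\bx$. This ``finitely many avoidable patterns can be avoided simultaneously'' principle is standard in combinatorics on words (cf.\ Lothaire~\cite{lothaire} or Bean--Ehrenfeucht--McNulty~\cite{BEM}); if desired, it can be rederived on a sufficiently large alphabet by a Lov\'asz local lemma argument applied to the bad events ``an occurrence of some $\theta(\bw_j)$ starts at position $i$'' in a uniformly random word.

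Given such an $\bx$, the next step is to let $L \subseteq A^+$ be the (infinite, factorial) set of nonempty finite factors of $\bx$, and to define $S := L \cup \{0\}$ with multiplication $\bu \cdot \bv := \bu\bv$ when $\bu\bv \in L$ and $\bu \cdot \bv := 0$ otherwise; associativity is immediate from the factoriality of $L$. Then $S$ is infinite (since $\bx$ is), finitely generated by the letters of $A$ occurring in $\bx$, and residually finite via the finite Rees quotients by the ideals $I_n := \{\bu \in L : |\bu| \ge n\} \cup \{0\}$, whose intersection is $\{0\}$. To verify that $S$ satisfies $\bw_1 = \bw_2$, observe that for any assignment $\theta\colon \{x_1,\dots,x_k\}^+ \to S$ and either $j = 1,2$, if the value $\theta(\bw_j)$ computed in $S$ were nonzero then it would equal the concrete concatenation (in $A^+$) of the words $\theta(x_i) \in L$, which is by definition a substitution instance of $\bw_j$; but such instances are not factors of $\bx$ by our choice, so are not in $L$ -- contradiction. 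Hence $\theta(\bw_1) = 0 = \theta(\bw_2)$ in $S$.

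Finally, Proposition~\ref{prop:notlocallyfinite} yields that $\ldbrack \bw_1 = \bw_2 \rdbrack$ is not locally finite, and hence not compact (compact pseudovarieties, being generated by a single finite semigroup, are locally finite). The main obstacle is the combinatorics-on-words input in the first step -- simultaneous avoidance of two avoidable patterns; the factor-semigroup construction and its verification are routine once that input is in hand.
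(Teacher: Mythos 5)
Your proposal is correct and is essentially the paper's own argument: both use simultaneous avoidance of $\bw_1$ and $\bw_2$ by a single infinite word over a finite alphabet (the paper cites \cite[Corollary 3.2.9]{lothaire}), form the Rees-quotient factor semigroup $A^+/I(\bx)$ (your $L\cup\{0\}$ is the same semigroup), verify it is infinite, finitely generated, residually finite via the length-ideal quotients, and satisfies $\bw_1=\bw_2=0$, then apply Proposition~\ref{prop:notlocallyfinite}. The only difference is cosmetic (your optional Lov\'asz-local-lemma remark is unnecessary; the cited combinatorics-on-words result suffices, e.g.\ via the direct-product-of-words construction).
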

\begin{proof}
There is a a finite alphabet $A$, and an infinite sequence $\bu$ on $A$ which avoids images of both $\bw_1$ and $\bw_2$ (see \cite[Corollary 3.2.9]{lothaire} for example).  Let $I(\bu)$ be the ideal of $A^+$ consisting of the non-factors of $\bu$.  Then $S=A^+/I(\bu)$ is an infinite semigroup satisfying $\bw_1=\bw_2=0$ since any evaluation of $\bw_1$ and $\bw_2$ in $S$ will result in $0$ because $\bw_1,\bw_2$ are avoided by $\bu$. It is residually finite because if $I_n$ is the ideal of words in $A^+$ of length greater than or equal to $m$, then the projections $S\to A^+/(I(\bu)\cup I_n)$ separate points.  Thus $\ldbrack \bw_1=\bw_2\rdbrack$ is not locally finite by Proposition~\ref{prop:notlocallyfinite}.
\end{proof}
To achieve a compact $\smi$ it follows from Proposition~\ref{prop:balancedcase} that we need $n>k$ in Proposition~\ref{prop:133}.  The smallest choice is then $n=3$ and $k=2$, for which there are four possible cases: $x^2y=yx^2$, $x^2y=yxy$, $xy^2=xyx$ and $xyx=yxy$.  The first of these involves  avoidable words only, hence by Proposition~\ref{prop:unbalancedcase} does not define a compact pseudovariety.  In the next section we will show that the remaining three pseudovarieties $\ldbrack x^2y=yxy\rdbrack$, $\ldbrack xy^2=xyx\rdbrack$ and $\ldbrack xyx=yxy\rdbrack$ are indeed compact.  We then use these to generate an infinite family of compact $\smi$ examples.

%The following result is due to Mark Sapir (private communication).
%
%\begin{proposition}\label{prop:unbalancedcase}
%Suppose that $\bw_1,\bw_2\in \{x_1,\dots,x_k\}^+$ such that $\bw_1$ and $\bw_2$ both contain a square.  Then the pseudovariety $\ldbrack \bw_1=\bw_2\rdbrack$ is not locally finite and hence not compact.  This is automatically the case if $k=2$ and $|\bw_1|=|\bw_2|\geq 4$.
%\end{proposition}
%\begin{proof}
%There is a well known infinite square-free sequence $\bu$ on a $3$-letter alphabet $A$ (e.g., the Morse sequence~\cite{lothaire}).  Let $I(\bu)$ be the ideal of $A^+$ consisting of the non-factors of $\bu$.  Then $S=A^+/I(\bu)$ is an infinite semigroup satisfying the identity $x^2=0$. Consequently it satisfies $\bw_1=\bw_2$ since any evaluation of $\bw_1$ and $\bw_2$ in $S$ will result in $0$ because $\bw_1,\bw_2$ contains squares. It is residually finite because if $I_n$ is the ideal of words in $A^+$ of length greater than or equal to $m$, then the projections $S\to A^+/(I(\bu)\cup I_n)$ separate points.  Thus $\ldbrack \bw_1=\bw_2\rdbrack$ is not locally finite by Proposition~\ref{prop:notlocallyfinite}.
%\end{proof}

\section{Compact $\smi$ pseudovarieties}\label{sec:compact}
Following Proposition~\ref{prop:133}, the pseudovarieties $\ldbrack xyx=xyy\rdbrack$, $\ldbrack xyx=yyx\rdbrack$ and $\ldbrack xyx=yxy\rdbrack$ are $\smi$.  We now show that each is compact, thus answering a central part of Problem 36 in \cite{qtheor}. %, page 617.
The main difficulties are in finding equational deductions for various consequences of the given axiom.  While this was done by hand, the authors also used Prover9 for a separate verification.  Recall that we use $\equiv$ between words to denote the fact that the words are identical.  So $xy\not\equiv yx$ as the two sides are distinct, while $\bw\equiv xy$ would denote the fact that the word $\bw$  is the actual string $xy$ (where $x,y$ are letters).  In the context of an equational deduction, we place an equation number over the top of an equality sign to indicate which law is being applied.  We use bracketing mostly to specify the precise subword to which the application is being applied, while an underline indicates the subword obtained during the previous deduction.

\subsection{$\ldbrack xyx=xyy\rdbrack$ and $\ldbrack xyx=yyx\rdbrack$}
We consider the variety generated by
\begin{equation}
xyx= xyy.\label{eq:main}
\end{equation}
with the case $xyx=yyx$ following by symmetry.
\begin{lemma}
The following are consequences of equation \eqref{eq:main}\up:
\begin{align}
&x^4= x^5\label{eq:period}\\
& xyz^2= xyz^3= xyz^4\label{eq:flex}\\
& xy^3= xy^4\label{eq:2flex}\\
& x^2y^2= x^2y^4= x^2y^3.\label{eq:3flex}
\end{align}
\end{lemma}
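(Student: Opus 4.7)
The plan is to derive each identity from \eqref{eq:main} using only substitution instances of the axiom, which I treat in the form $aba=ab^2$ for arbitrary words $a,b$. The identities will be proved essentially in the order \eqref{eq:period}, \eqref{eq:2flex}, \eqref{eq:flex}, \eqref{eq:3flex}, since \eqref{eq:2flex} feeds into \eqref{eq:flex}, and \eqref{eq:3flex} is a direct corollary of \eqref{eq:flex}.

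Identity \eqref{eq:period} is immediate: applying \eqref{eq:main} with $y\mapsto x^2$ gives $x\cdot x^2\cdot x = x\cdot x^2\cdot x^2$, i.e.\ $x^4=x^5$. For \eqref{eq:2flex} the plan is to compute the word $xy^2xy$ in two different ways and equate. Parsing it as $(xy)(y)(xy)$ and using \eqref{eq:main} with $a=xy$, $b=y$ collapses it to $xy^3$. Alternatively, one first rewrites the prefix $xy^2=xyy$ as $xyx$ via \eqref{eq:main} to obtain $xyxxy=(xy)(x)(xy)$, which then collapses to $xyx^2$ by \eqref{eq:main} with $a=xy$, $b=x$; rewriting $xyx$ back to $xy^2$ yields $xy^2x=(x)(y^2)(x)$, and a final application of \eqref{eq:main} with $a=x$, $b=y^2$ produces $xy^4$. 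Equating the two evaluations of $xy^2xy$ proves \eqref{eq:2flex}.

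For \eqref{eq:flex}, the second equality $xyz^3=xyz^4$ is just \eqref{eq:2flex} with $x\mapsto xy$, $y\mapsto z$. The equality $xyz^2=xyz^3$ is the harder part and will proceed via the intermediate form $xyzx^3$. Starting from $xyz^2=(xy)(z)(z)=(xy)(z)(xy)=xyzxy$ (using \eqref{eq:main} with $a=xy$, $b=z$), the inner factor $yzxy=(y)(zx)(y)$ expands to $yzxzx$ via \eqref{eq:main} with $a=y$, $b=zx$, producing $xyzxzx=xy(zxz)x$; the central $zxz=(z)(x)(z)$ then collapses to $zx^2$ by \eqref{eq:main} with $a=z$, $b=x$, giving $xyz^2=xyzx^3$. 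Iterating \eqref{eq:2flex} on $xyz\cdot x^n$ shows $xyzx^n=xyzx^3$ for all $n\geq3$. Finally
\[
xyz^3=(xyz^2)z=(xyzx^4)z=xy(zx^4z)=xy(zx^8)=xyzx^8=xyzx^3=xyz^2,
\]
where $zx^4z=zx^8$ is \eqref{eq:main} with $a=z$, $b=x^4$.

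Identity \eqref{eq:3flex} now follows immediately by substituting $(x,y,z)\mapsto(x,x,y)$ in \eqref{eq:flex}. The chief obstacle in the plan is the first equality of \eqref{eq:flex}: one must expand the length-$4$ word $xyz^2$ through several longer intermediaries, invoke \eqref{eq:2flex} to absorb arbitrarily large powers of $x$, and then contract the result back down using \eqref{eq:main} once more; every other piece of the argument is either an immediate substitution or a single two-way computation.
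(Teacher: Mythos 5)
Your derivations are all correct, and they use the same basic technique as the paper (pure equational rewriting, applying instances of \eqref{eq:main} to subwords), but the route is organized differently. The paper proves \eqref{eq:flex} first by the short chain $xyz^2= xyzxy= xyzyzy= xyzyz^2= xyz^4$ (then handles $xyz^3$ via \eqref{eq:period}), and obtains \eqref{eq:2flex} and \eqref{eq:3flex} as corollaries of \eqref{eq:flex}; in particular its proof of $x^2y^2=x^2y^3$ takes a small extra detour through $x^2y^5$. You instead prove \eqref{eq:2flex} independently and first, by the nice trick of evaluating $xy^2xy$ two ways ($(xy)y(xy)=xy^3$ versus the chain through $xyxxy$, $xyx^2$, $xy^2x$, $xy^4$), and then feed it into \eqref{eq:flex}: after the common first step $xyz^2=xyzxy$ you expand the suffix $y(zx)y$ rather than the prefix $x(yz)x$, reach the intermediate form $xyzx^3$, absorb powers of $x$ via \eqref{eq:2flex}, and close the loop with $zx^4z=zx^8$; finally \eqref{eq:3flex} drops out as an immediate substitution instance of \eqref{eq:flex}. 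Your version costs a somewhat longer argument for the first equality of \eqref{eq:flex}, but it buys a self-contained proof of \eqref{eq:2flex} and a genuinely shorter treatment of \eqref{eq:3flex}; every individual step checks out, so either organization is a valid proof of the lemma.
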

\begin{proof}
Proof of \eqref{eq:period}. By assigning $x\mapsto x$ and $y\mapsto x^2$ we obtain $x^4\equiv x(x^2)x\rapprox{eq:main}x(x^2)^2\equiv x^5$.

Proof of \eqref{eq:flex}. We first show that $xyz^2= xyz^4$.  We have
$[xyz^2]\rapprox{eq:main} \underline{[xyzx]y}\rapprox{eq:main} \underline{xyz[yz}y]\rapprox{eq:main}
x[yz\underline{y]z^2}\rapprox{eq:main}
x\underline{yz^2}z^2\equiv xyz^4$.
This then gives $xyz^3\equiv xyz^2z= xyz^4z\rapprox{eq:period}xyz^4$.

\noindent Proof of (\ref{eq:2flex},\ref{eq:3flex}).  These are consequences of \eqref{eq:flex}: $xy^3\equiv  xy y^2\rapprox{eq:flex}xyy^3\equiv xy^4$, while $x^2y^2\rapprox{eq:flex}x^2y^4$.  And $x^2y^2\equiv xxy^2\rapprox{eq:flex}xxy^4\equiv x^2y^4\rapprox{eq:period}x^2y^4y$.  Now applying $x^2y^2= x^2y^4$ from right to left we obtain $x^2y^2= x^2y^3$.
\end{proof}

\begin{lemma}\label{lem:normalform}
If $\bw$ is a word in letters $x_1,\dots,x_n$, with each letter appearing and with leftmost appearances of the letters in the given order.  Then $\bw$ is equivalent under \eqref{eq:main} to the word
\[
x_{1}^{i_1}x_{2}^{i_2}\cdots x_{n}^{i_n}
\]
for some $i_1\in\{1,2,3,4\}$, $i_2\in\{1,2,4\}$ and $i_j\in\{1,4\}$ for $j>2$ and such that if $i_1>1$ then $i_2\in\{1,4\}$.
\end{lemma}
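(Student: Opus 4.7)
The strategy is two-stage. First I reduce $\bw$ to the form $x_1^{a_1}x_2^{a_2}\cdots x_n^{a_n}$ for some positive integers $a_j$; then I trim each exponent into the stated range using the derived identities \eqref{eq:period}--\eqref{eq:3flex}.

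For the first stage I induct on $n$. The essential rewrite is obtained by substituting $x\mapsto a$, $y\mapsto \bv$ in \eqref{eq:main}:
\[
\bu_1\, a\, \bv\, a\, \bu_2 \;=\; \bu_1\, a\, \bv\, \bv\, \bu_2,
\]
valid whenever $\bv$ is nonempty and $a\notin \con(\bv)$. Applied with $a = x_1$, this rewrite strictly decreases the number of $x_1$'s in the word and preserves the leftmost-appearance order of the remaining letters: any letter occurring in the duplicated copy of $\bv$ already occurred in the earlier copy, so no letter's first appearance can be shifted ahead of a letter that originally preceded it. Iteration therefore terminates, and halts only when every $x_1$ lies in a single consecutive block; since the original word began with $x_1$ and the rewrite never displaces the first letter, this block sits at the front, giving $\bw = x_1^{a_1}\bw'$ with $\bw'\in\{x_2,\dots,x_n\}^+$ still satisfying the lemma's hypothesis on $n-1$ variables. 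The inductive hypothesis then yields $\bw = x_1^{a_1} x_2^{a_2}\cdots x_n^{a_n}$.

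For the second stage I clean up exponents left to right. Use \eqref{eq:period} to force $a_1\in\{1,2,3,4\}$. For $a_2$: if $a_1 = 1$, substitute $x\mapsto x_1,\ y\mapsto x_2$ into \eqref{eq:2flex} to collapse $a_2 = 3$ into $a_2 = 4$, giving $a_2\in\{1,2,4\}$; if $a_1\geq 2$, substitute $x\mapsto x_1^{a_1-1},\ y\mapsto x_1,\ z\mapsto x_2$ into \eqref{eq:flex} to collapse each of $a_2\in\{2,3,4\}$ into $4$, giving $a_2\in\{1,4\}$. For $a_j$ with $j\geq 3$, the prefix $x_1^{a_1}\cdots x_{j-1}^{a_{j-1}}$ always has at least two letters, so a substitution into \eqref{eq:flex} of the form $x\mapsto(\text{prefix minus the last letter}),\ y\mapsto x_{j-1},\ z\mapsto x_j$ collapses each of $a_j\in\{2,3,4\}$ into $a_j = 4$.

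The main obstacle is the first stage, specifically the careful verification that the $x_1$-eliminating rewrite preserves the leftmost-appearance ordering of $x_2,\dots,x_n$ so that the inductive hypothesis genuinely applies to $\bw'$. Termination is immediate from the monotonically decreasing $x_1$-count. The second stage is essentially bookkeeping once the available substitutions into \eqref{eq:period}--\eqref{eq:3flex} are catalogued against the normal-form cases.
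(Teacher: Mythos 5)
Your proof is correct and takes essentially the same route as the paper's: the same duplication rewrite $x_1\bv x_1 \to x_1\bv\bv$ gathers each letter's occurrences into a single block (you organize this as induction on the number of letters, while the paper iterates over the letters in order of index), followed by the same exponent-trimming via \eqref{eq:period}--\eqref{eq:3flex}, with your preservation-of-first-appearance check playing the role of the paper's observation that the prefix before the processed letter is untouched. The only cosmetic differences are that you apply \eqref{eq:flex} directly (with $x\mapsto x_1^{a_1-1}$, $y\mapsto x_1$) where the paper invokes \eqref{eq:3flex} for the case $i_1>1$, $i_2=2$, and you should say explicitly that \eqref{eq:period} is applied to every letter, not just $x_1$, so that all exponents are at most $4$ before the collapsing steps.
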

\begin{proof}
We first reduce to an intermediate form where the $i_j$ may be any number between $1$ and $4$.
Let $i$ be smallest such that $\bw$ has a subword of the form $x_i\bu x_i$, with no occurrences of $x_i$ in $\bu$: if there are no such $i$ then $\bw$ is already in the intermediate form just described.  Otherwise though, let $\bw_i$ denote the prefix of $\bw$ up to but not including the left-most occurrence of $x_i$.  Apply \eqref{eq:main} to replace $x_i\bu x_i$ by $x_i\bu\bu$.  Note that the number of occurrences of $x_i$ goes down under this application of~\eqref{eq:main}, but the prefix $\bw_i$ is unchanged.  Thus we may repeat this for $x_i$ until eventually arriving at $\bw= \bw_ix_i^{j_i}\bv$, where $\bv$ contains no occurrences of $x_i$, and $j_i> 0$.  Now search for the next value $i$, as the smallest number for which this new word there is a subword of the form $x_i\bu x_i$, with no occurrences of $x_i$ in $\bu$.  Repeat until there are no more such $i$.  Denote the resulting intermediate word as $\bw'$.

Now use equation \eqref{eq:period} to reduce any powers of letters in $\bw'$ to at most $4$.  Now if $i\geq 3$ and $x_i$ is nonlinear in $\bw'$, then equation \eqref{eq:flex} can be used to replace this power by $4$.  Similarly if the power of $x_2$ is $3$, then equation~\eqref{eq:2flex} shows that it can be raised to $4$.  If the power of $x_2$ is $2$ and the power of $x_1$ is not $1$, then equation~\eqref{eq:3flex} shows that $x_2$ may be raised to the power $4$.  This completes the proof.
\end{proof}

We now give a finite generator for the variety defined by $xyx= xyy$.  
This generator was found using the aid of Mace4, and while a full justification for the validity of the example is given in the proof of Theorem~\ref{thm:xyx_xyy} below, we first briefly describe the technique for discovery. 
As an initial step, we observed by syntactic arguments that whenever $\bu= \bv$ is an equation between distinct normal forms, then by identification of variables, there are distinct normal forms $\bu'$ and $\bv'$ in at most $3$ variables and such that $\bu= \bv\vdash \bu'= \bv'$.  
This is a consequence of Lemma~\ref{lem:normalform}: this already shows that $\ldbrack xyx=xyy\rdbrack$ is compact, as it shows that the three-generated relatively free algebra, which is finite, generates the pseudovariety.  
To find a smaller generator, it is then only necessary to find small models of $xyx= xyy$ that fail such identities.  
These can be found, one by one, using Mace4.  To get the single small generator ${\bf B}$ we fixed the assumptions $x(yz)= (xy)z, x(yx)= x(yy)$, and searched for counterexamples for the various cases encountered in the proof of Theorem~\ref{thm:xyx_xyy} below.  The most fruitful approach was to first find a counterexample to the single case $x^3y^4= x^4y^4$, which yields the subsemigroup on $\{0,1,\dots,7\}$.  
This is then added to the assumptions and a search for a counterexample to $x^4yz^4= x^4y^2z^4$ is initiated.  
This produces semigroup ${\bf B}$. The two searches take only a few seconds.
\begin{table}
\[\begin{tabular}{r|rrrrrrrrrrr}
$*$ & 0 & 1 & 2 & 3 & 4 & 5 & 6 & 7 & 8 & 9 & 10\\
\hline
    0 & 2 & 3 & 4 & 5 & 6 & 7 & 6 & 6 & 5 & 5 & 5 \\
    1 & 1 & 1 & 1 & 1 & 1 & 1 & 1 & 1 & 1 & 1 & 1 \\
    2 & 4 & 5 & 6 & 7 & 6 & 6 & 6 & 6 & 7 & 7 & 7 \\
    3 & 3 & 3 & 3 & 3 & 3 & 3 & 3 & 3 & 3 & 3 & 3 \\
    4 & 6 & 7 & 6 & 6 & 6 & 6 & 6 & 6 & 6 & 6 & 6 \\
    5 & 5 & 5 & 5 & 5 & 5 & 5 & 5 & 5 & 5 & 5 & 5 \\
    6 & 6 & 6 & 6 & 6 & 6 & 6 & 6 & 6 & 6 & 6 & 6 \\
    7 & 7 & 7 & 7 & 7 & 7 & 7 & 7 & 7 & 7 & 7 & 7 \\
    8 & 9 & 3 & 10 & 3 & 10 & 10 & 10 & 10 & 8 & 9 & 10 \\
    9 & 10 & 3 & 10 & 10 & 10 & 10 & 10 & 10 & 10 & 10 & 10 \\
    10 & 10 & 10 & 10 & 10 & 10 & 10 & 10 & 10 & 10 & 10 & 10
\end{tabular}
\]
\caption{The semigroup $\mathbf{B}$, a generator for $\ldbrack xyx=xyy \rdbrack$.}\label{tab:B}
\end{table}
\begin{theorem}\label{thm:xyx_xyy}
The variety defined by $xyx= xyy$ is generated by the  semigroup ${\bf B}$ of Table~\ref{tab:B}.\\
\end{theorem}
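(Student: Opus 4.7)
The plan is to prove Theorem~\ref{thm:xyx_xyy} in three stages: (i)~verify that $\mathbf{B}$ satisfies $xyx=xyy$, so that $\mathbf{B}\in\ldbrack xyx=xyy\rdbrack$; (ii)~reduce the converse inclusion to a finite verification over words in at most three variables; and (iii)~carry out that finite verification.

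Stage~(i) is a direct check against Table~\ref{tab:B}: since associativity is built into the multiplication table, it suffices to verify $a(ba)=a(bb)$ for the $|\mathbf{B}|^2=121$ pairs $(a,b)\in\mathbf{B}^2$, which is a routine computation easily automated.

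Stage~(ii) exploits Lemma~\ref{lem:normalform}, which gives an explicit normal form $x_1^{i_1}\cdots x_n^{i_n}$ (with the stated restrictions on the exponents) for every element of the $n$-generated relatively free semigroup $F_n$ in $\ldbrack xyx=xyy\rdbrack$. In particular, each $F_n$ is finite and the variety is locally finite, so $\mathbf{B}$ generates $\ldbrack xyx=xyy\rdbrack$ if and only if every $F_n$ embeds in some finite direct power of $\mathbf{B}$, equivalently if every pair of distinct normal forms is separated by some assignment of the variables into $\mathbf{B}$. The key reduction, asserted in the discussion before the theorem, is that if $\bu\not\equiv\bv$ are distinct normal forms, then there is an identification of variables producing distinct normal forms $\bu',\bv'$ in at most three letters with $\bu=\bv\vdash \bu'=\bv'$; a separating assignment for $\bu',\bv'$ then pulls back to one for $\bu,\bv$. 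The argument is syntactic: two distinct normal forms must disagree either in their sequence of leftmost occurrences or in some exponent $i_k$, and all variables outside a small window around that discrepancy can be identified with a variable inside the window without collapsing the distinction under the normal-form rules \eqref{eq:period}--\eqref{eq:3flex}. This bookkeeping is delicate but routine.

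The main obstacle is Stage~(iii), the explicit verification that $\mathbf{B}$ separates every pair of distinct normal forms in at most three variables. By Lemma~\ref{lem:normalform} the total number of such normal forms is small (under twenty), so the case analysis is bounded. I would proceed in the order suggested by the discussion preceding the theorem: first show that the subsemigroup of $\mathbf{B}$ on $\{0,\ldots,7\}$ separates all two-variable normal forms, with the critical case being $x^3y^4\neq x^4y^4$; then show that the extra elements $8,9,10$ of $\mathbf{B}$ supply witnesses for the remaining three-variable separations, the critical case being $x^4yz^4\neq x^4y^2z^4$. Each individual pair $(\bu',\bv')$ reduces to evaluating two short words in $\mathbf{B}$ and comparing the products, so the checks are elementary once the right assignments are identified; computer assistance (as the authors' use of Mace4 and Prover9) is the most reliable way to cover all cases without error, and indeed is how $\mathbf{B}$ itself was discovered. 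Once this exhaustive check goes through, no identity beyond the consequences of $xyx=xyy$ can hold in $\mathbf{B}$, completing the proof.
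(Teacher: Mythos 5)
Your overall skeleton does match the paper's: check that $\mathbf{B}\models xyx=xyy$, use Lemma~\ref{lem:normalform} to replace any non-consequence $\bu=\bv$ by an identity between distinct normal forms, and then show that $\mathbf{B}$ separates any two distinct normal forms. But as written, the proposal stops exactly where the content of the theorem begins. Your Stage~(ii) reduction to at most three variables is asserted as ``delicate but routine bookkeeping'' and never carried out; note that the paper uses that observation only as a discovery heuristic (and to get compactness from the $3$-generated relatively free semigroup), while its actual proof bypasses the reduction entirely by exhibiting evaluations that work uniformly for normal forms in any number of variables: letters preceding the first discrepancy go to $8$, the discrepant letter to $0$, later letters to $1$ (the maps $\theta_1,\theta_2,\theta_3$), and discrepancies of alphabet or of order of first occurrence are separated inside the subsemigroup $\{8,3,10\}$, which is $\2^l$ with an identity adjoined. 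Your Stage~(iii) is likewise only a promise: you name the identities $x^3y^4=x^4y^4$ and $x^4yz^4=x^4y^2z^4$ (which in the paper merely guided the Mace4 search for $\mathbf{B}$) and defer the rest to a computer, so no separating assignment is ever actually exhibited or verified. Since the theorem is precisely the claim that this particular $11$-element semigroup separates all pairs of distinct normal forms, a proof must either display the assignments (as the paper's case analysis does) or report a genuinely completed exhaustive check; the proposal does neither, and that is the gap.

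Two smaller points. The count of relevant normal forms is off: with the exponent restrictions of Lemma~\ref{lem:normalform} there are $4+9+18=31$ normal forms in at most three variables even for a fixed order of first occurrence, and pairs with different orders of occurrence or different alphabets must also be handled, so the case analysis is larger than ``under twenty'' (though of course still finite). Your pull-back step is fine as stated --- if $\bu=\bv\vdash\bu'=\bv'$ and $\bu'=\bv'$ fails in $\mathbf{B}$, then $\bu=\bv$ fails in $\mathbf{B}$ --- but it rests on the unproved three-variable reduction; if you keep that route you must prove the reduction, whereas adopting the paper's uniform evaluations lets you dispense with it altogether.
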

\begin{proof}
It is routinely verified that ${\bf B}$ is a semigroup satisfying $xyx= xyy$.  Thus it will suffice to show that if $\bu= \bv$ is an equation that does not follow from $xyx= xyy$ then $\bu= \bv$ fails on ${\bf B}$.  
So let $\bu= \bv$ be an identity that does not follow from $xyx= xyy$.  
By  Lemma~\ref{lem:normalform}, there is no loss of generality to assume that $\bu$ and $\bv$ are in normal form.

If $\bu$ or $\bv$ have distinct alphabets, or if the order of occurrence of the letters is not identical, then $\bu= \bv$ will fail on the subsemigroup $\{8,3,10\}$ of ${\bf B}$, as this semigroup is isomorphic to the monoid obtained from adjoining an identity element to $\2^l$ (where $8$ plays the role of the identity element).

Thus we may assume that there is a number $n>0$ such that $\bu\equiv x_1^{\alpha_1}\cdots x_n^{\alpha_n}$ and $\bv\equiv x_1^{\beta_1}\cdots x_n^{\beta_n}$ where $\alpha_1,\dots,\alpha_n$ and $\beta_1,\dots,\beta_n$, with the $\alpha_i$ and $\beta_i$ satisfying the constraints on indices in normal forms outlined in Lemma~\ref{lem:normalform}.  As $\bu\neq \bv$ there is some $i$ such that $\alpha_i\neq \beta_i$, and without loss of generality we may assume that $\alpha_i<\beta_i$.  If $\alpha_i=1$ for some $i\leq n$ then consider the evaluation $\theta_1$ into ${\bf B}$ defined by $x_i\mapsto 0$ and
\[
x_j\mapsto\begin{cases}
8&\text{ if }j<i\\
1&\text{ if }j>i.
\end{cases}
\]
Then $\theta_1(\bu)= 8 *0* 1= 3$ (or $0*1=3$ if $i=1$, or $8*0=9$ if $i=n$), while because $\beta_1>1$ we have $\theta_1(\bv)= 8 * 0^{\beta_1} * 1=10$ (or $0^{\beta_1}*1\in\{5,6,7\}$ if $i=1$, or $8 * 0^{\beta_1}=10$ if $i=n$, respectively).  In each case, $\theta_1(\bu)$ and $\theta_1(\bv)$ take different values in ${\bf B}$ as required.

Thus we may assume in remaining cases that if $\alpha_j=1$ if and only if $\beta_j=1$ for each $j=1,\dots,n$.
If $i=1$ and $\alpha_1\in\{2,3\}$ (so that $\beta_1\in\{2,3,4\}\backslash\{\alpha_1\}$), then use the evaluation $\theta_2$ into ${\bf B}$ defined
$x_1\mapsto 0$ and assigning all other letters to $1$.  Then $\theta_2(\bu)=0^{\alpha_1}1$, while $\theta_2(\bv)=0^{\beta_1}1$.  If $\alpha_1=3$ then $\beta_1=4$ and we have $\theta_2(\bu)=0^3*1=4*1=7$ while $\theta_2(\bv)=0^4*1=6*1=6$.  If $\alpha_1=2$, then $\theta_2(\bu)=0^2*1=2*1=5$, while $\theta_2(\bv)\in\{0^3,0^4\}*1=\{4*1,6*1\}=\{6,7\}$.  Thus $\theta_2(\bu)\neq \theta(\bv)$ in ${\bf B}$ as required.

Thus we may assume that $\alpha_1=\beta_1$.  
Looking at the constraints on indices for normal forms, we see that there is only one further way that $\bu$ and $\bv$ can differ: if $\alpha_1=\beta_1=1$ and $\alpha_2=2$ and $\beta_2=4$.  In this case, consider the evaluation $\theta_3$ into ${\bf B}$ defined by $x_1,x_2\mapsto 0$ and $x_j\mapsto 1$ for all $j>2$.  
Then $\theta_3(\bu)=0^3*1=4*1=7$ while $\theta_3(\bv)=0^{1+\beta_2}*1=0^4*1=6$, because $\beta_2\geq 3$.

Thus we have shown that every $\bu,\bv$ with $xyx= xyy\not\vdash \bu= \bv$ we also have ${\bf B}$ fails $\bu= \bv$, which shows that ${\bf B}$ generates the variety defined by $xyx= xyy$.
\end{proof}

\begin{remark}\label{rem:xyxxyy}
The pseudovariety $\ldbrack xyx=xyy \rdbrack$ has precisely two maximal sub-pseudovarieties.  These are defined taking the law $xyx=xyy$ in conjunction with exactly one of the following laws: $x^3y^4=x^4y^4$, and $x^4yz^4=x^4y^4z^4$.
\end{remark}
\begin{proof}
First observe that, when combined with $xyx=xyy$, each of the two equations listed defines a proper subvariety of that defined by $xyx=xyy$, because in each equation the two sides are distinct normal forms.  Consider then an equation $\bu=\bv$ between two distinct normal forms; we must show that one of the two listed equations is a consequence of $\{\bu=\bv,xyx=xyy\}$.  It is useful to note that the equation $x^4y^4z^4=x^4z^4y^4$,  in conjunction with $xyx=xyy$ implies $x^4yz^4=x^4y^4z^4$.  Indeed,  the expression $x^4 (yz^4)^4 y^4$ has normal form $x^4yz^4$ while $x^4 y^4 (yz^4)^4$ has normal form $x^4y^4z^4$.  But a single application of $x^4y^4z^4=x^4z^4y^4$ yields the consequence $x^4 (yz^4)^4 y^4=x^4 y^4 (yz^4)^4$.

If $\bu$ involves a letter $x$ not appearing in $\bv$, then $\bu$ reduces to one of the normal forms $x^4$, $x^4y^4$ or $y^4x^4$, while $\bv$ reduces to $y^4$.  It is not hard to verify that $x^4y^4z^4=x^4z^4y^4$ (hence $x^4yz^4=x^4y^4z^4$) is a consequence of each of the possible resulting laws.  Now assume that $\bu$ and $\bv$ have the same alphabet  $x_1,\dots,x_n$, with the given numbering reflecting the order of first appearance of the letters in $\bu$.

Assume that the order of first appearance of letters in $\bv$ is not the same as in $\bu$.  Let $i$ be the smallest index such that $x_i$ does not make its first appearance first after the first appearance of $x_{i-1}$ (or $i=1$ if $\bv$ starts with a letter other than $x_1$).  Assign all letters $x_1,\dots,x_{i-1}$ the value $x^4$, assign $x_i$ the value $y^4$ and assign all remaining letters the value $z^4$.  Then $\bu$ reduces to $x^4y^4z^4$ (or $y^4z^4$ if $i=1$) while $\bv$ reduces to $x^4z^4y^4$ (or $z^4y^4$ if $i=1$).  This yields $x^4y^4z^4=x^4z^4y^4$ (hence $x^4yz^4=x^4y^4z^4$).

Now we may assume that $\bu\equiv x_1^{i_1}x_2^{i_2}\dots x_n^{i_n}$ and $\bv\equiv x_1^{j_1}x_2^{j_2}\dots x_n^{j_n}$, both normal forms, but with $(i_1,\dots,i_n)\neq (j_1,\dots,j_n)$.  If $n=1$ then we easily obtain law $x^3y^4=x^4y^4$, so assume that $n>1$.  

 If $i_1< j_1$, then by  fixing $x_1$ and assigning all remaining letters the value $y^4$ we obtain $x_1^{i_1}y^4=x_1^{j_1}y^4$.  If $i_1=1$, then replace $x_1$ by $x^2$ to obtain $x^2y^4=x^4y^4$, from which $x^3y^4=x^4y^4$ is a consequence.  If $i_1\in\{2,3\}$, then we may also deduce $x^3y^4=x^4y^4$ directly from $x_1^{i_1}y^4=x_1^{j_1}y^4$.  Assume now that $i_1=j_1$.
 
 If $i_2<j_2$ and $i_1=j_1=1$, then we may obtain $x_1x_2^{i_2}y^4=x_1x_2^{j_2}y^4$. Note that $i_2<j_2$ implies $i_2\in\{1,2\}$.  Replace $x_1$ by $x$, $x_2$ by $x^{3-i_2}$ to obtain $x^3y^4=x^4y^4$.  Now assume that $i_1=j_1>1$, so that $i_2<j_2$ implies $i_2=1$ and $j_2=4$.  Then we obtain $x^4yz^4=x^4y^4z^4$.
 
 Now assume that $i_1=j_1$ and $i_2=j_2$ but $i_k=1$ and $j_k=4$, for some $k\in\{3,\dots,n\}$.  Then the law $x^4yz^4=x^4y^4z^4$ is an easy consequence.
 
 Finally, we note that the subpseudovarieties are distinct: Mace4 finds a model $5$-element model of 
 $\{xyx=xyy,x^3y^4=x^4y^4\}$ failing $x^4yz^4=x^4y^4z^4$, and an $8$-element model of $\{xyx=xyy,x^4yz^4=x^4y^4z^4\}$ failing $x^3y^4=x^4y^4$.
 \end{proof}
% \begin{table}
% \begin{tabular}{r|rrrrr}
%*: & 0 & 1 & 2 & 3 & 4\\
%\hline
%    0 & 0 & 1 & 3 & 3 & 4 \\
%    1 & 3 & 3 & 4 & 3 & 3 \\
%    2 & 2 & 2 & 2 & 2 & 2 \\
%    3 & 3 & 3 & 3 & 3 & 3 \\
%    4 & 4 & 4 & 4 & 4 & 4
%\end{tabular}\qquad \begin{tabular}{r|rrrrrrrr}
%*: & 0 & 1 & 2 & 3 & 4 & 5 & 6 & 7\\
%\hline
%    0 & 3 & 5 & 2 & 4 & 2 & 7 & 2 & 6 \\
%    1 & 1 & 1 & 1 & 1 & 1 & 1 & 1 & 1 \\
%    2 & 2 & 2 & 2 & 2 & 2 & 2 & 2 & 2 \\
%    3 & 4 & 7 & 2 & 2 & 2 & 6 & 2 & 2 \\
%    4 & 2 & 6 & 2 & 2 & 2 & 2 & 2 & 2 \\
%    5 & 5 & 5 & 5 & 5 & 5 & 5 & 5 & 5 \\
%    6 & 6 & 6 & 6 & 6 & 6 & 6 & 6 & 6 \\
%    7 & 7 & 7 & 7 & 7 & 7 & 7 & 7 & 7
%\end{tabular}
% \caption{Examples demonstrating independence of laws in Remark \ref{rem:xyxxyy}}\label{xyyexamples}
% \end{table}
\subsection{$\ldbrack xyx=yxy\rdbrack$}
Now we show that following law defines a compact pseudovariety:
\begin{equation}
xyx\feq yxy.\label{eq:main2}
\end{equation}
The `bracketed' center and  the `brackets' can be exchanged.
As consequences the following equalities can be derived.
\begin{lemma}[Periodicity]
\begin{equation}
x^4\feq x^5\label{eq:period2}
\end{equation}
\end{lemma}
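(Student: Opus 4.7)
The plan is to derive $x^4 = x^5$ by a single substitution into the defining law \eqref{eq:main2}, exactly paralleling the derivation of \eqref{eq:period} from \eqref{eq:main} earlier in the paper. Specifically, substitute $x \mapsto x$ and $y \mapsto x^2$ into $xyx = yxy$. The left-hand side becomes $x \cdot x^2 \cdot x \equiv x^4$, while the right-hand side becomes $x^2 \cdot x \cdot x^2 \equiv x^5$. So \eqref{eq:period2} is an immediate one-line consequence of \eqref{eq:main2}.

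There is essentially no obstacle here: unlike most of the consequence lemmas one expects to need later for $\ldbrack xyx = yxy \rdbrack$ (such as commuting certain powers, reducing nested products, or identifying normal forms), the period-$1$ law $x^4 = x^5$ falls out of a single ``diagonal'' specialization of the axiom, using that $xyx$ and $yxy$ have different total $x$-degrees once $y$ is replaced by a power of $x$.

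I would present it simply as: ``Setting $y = x^2$ in \eqref{eq:main2} gives $x^4 \equiv x(x^2)x \rfeq{eq:main2} (x^2)x(x^2) \equiv x^5$.'' No auxiliary lemmas, no case analysis, and no appeal to previous equations besides \eqref{eq:main2} itself. This mirrors the proof style already established for the analogous lemma in the preceding subsection, which will make the parallel treatment of the two varieties $\ldbrack xyx = xyy \rdbrack$ and $\ldbrack xyx = yxy \rdbrack$ visually uniform for the reader.
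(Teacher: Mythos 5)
Your proof is correct and is essentially identical to the paper's: the paper also sets $y\mapsto x^2$ (written with the letter $a$) in \eqref{eq:main2} to get $x^4\equiv x(x^2)x\feq (x^2)x(x^2)\equiv x^5$. Nothing further is needed.
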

\begin{proof}
By assigning $x\mapsto a$ and $y\mapsto a^2$ we obtain $a^4\equiv a(a^2)a\rfeq{eq:main2}(a^2)a(a^2)\equiv a^5$.
\end{proof}

\begin{lemma}[Inside out]
For any $n,m\geq 0$\up:
\begin{align}
xyzx &\feq y^nxyzxy^m\label{eq:insideout}\\
xyzx &\feq z^nxyzxz^m\label{eq:insideout2}
\end{align}
\end{lemma}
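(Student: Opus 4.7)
The plan is to first establish the balanced identity $y \cdot xyzx \cdot y = xyzx$ by two applications of \eqref{eq:main2}, then iterate to $y^n \cdot xyzx \cdot y^n = xyzx$, and finally leverage periodicity \eqref{eq:period2} to reduce the asymmetric case $n \neq m$ to the balanced one.

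For the balanced identity, I would apply \eqref{eq:main2} with the substitution $x \mapsto xy$, $y \mapsto z$, which gives $(xy)z(xy) = z(xy)z$, i.e.\ $xyzxy = zxyz$. Combining this with $xyzx = yzxyz$ (\eqref{eq:main2} applied with $x \mapsto x$, $y \mapsto yz$), we obtain
\[
y \cdot xyzx \cdot y = y \cdot (xyzxy) = y \cdot zxyz = yzxyz = xyzx.
\]
An easy induction on $n$ extends this to $y^n \cdot xyzx \cdot y^n = xyzx$ for every $n \geq 0$.

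For arbitrary $n, m \geq 0$, I would substitute the balanced identity (with exponent $4$) inside the word to obtain
\[
y^n \cdot xyzx \cdot y^m = y^n \cdot (y^4 \cdot xyzx \cdot y^4) \cdot y^m = y^{n+4} \cdot xyzx \cdot y^{m+4}.
\]
Since $n+4, m+4 \geq 4$, iterating \eqref{eq:period2} yields $y^{n+4} = y^4 = y^{m+4}$, and one further application of the balanced identity gives $y^4 \cdot xyzx \cdot y^4 = xyzx$, establishing \eqref{eq:insideout}.

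The proof of \eqref{eq:insideout2} is structurally identical, but begins from the substitution $x \mapsto zx$, $y \mapsto y$ in \eqref{eq:main2}, which gives $zxyzx = yzxy$; combining with $yzxyz = xyzx$ then yields $z \cdot xyzx \cdot z = (zxyzx) \cdot z = yzxy \cdot z = yzxyz = xyzx$, after which the induction and periodicity argument proceeds verbatim with $z$ in place of $y$. I expect the main conceptual obstacle to be guessing the initial length-reducing substitutions ($x \mapsto xy$ for \eqref{eq:insideout} and $x \mapsto zx$ for \eqref{eq:insideout2}); once these are identified, the rest of the argument is essentially bookkeeping with the periodicity law.
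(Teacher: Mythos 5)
Your proposal is correct and follows essentially the same route as the paper: both establish the key balanced identity $xyzx=y\,xyzx\,y$ from two applications of \eqref{eq:main2}, iterate it, and then use periodicity \eqref{eq:period2} to absorb arbitrary exponents $n,m\geq 0$. The only (minor) divergence is that you derive \eqref{eq:insideout2} by an explicit second computation ($zxyzx=yzxy$, etc.), whereas the paper simply appeals to the left--right symmetry of the defining law; both are fine.
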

\begin{proof}
  First
\begin{equation*}
[abca]\rfeq{eq:main2}\underline{b[cabc]}\rfeq{eq:main2}b\underline{abcab}.
\end{equation*}
Apply this four times to achieve $abca\feq [b^4]abca[b^4]\rfeq{eq:period2}\underline{b^n[b^4}abca\underline{b^4]b^m}\feq b^n\underline{abca}b^m$.  Law \eqref{eq:insideout2} follows by symmetry.
\end{proof}

\begin{lemma}[Outside in]
\begin{equation}
xyzx\feq xyxzx \label{eq:outsidein}
\end{equation}
\end{lemma}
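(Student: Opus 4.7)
The plan is to derive \eqref{eq:outsidein} by \emph{creating} a $yxy$ pattern at the front of $xyzx$, and then collapsing it to $xyx$ using the main law. The key observation is that while $xyzx$ does not itself contain any subword of the form $aba$ to which the main law can be directly applied (other than the whole word itself), the Inside Out lemma lets us freely prepend letters of the \emph{second} position, and prepending a single $y$ turns the first three letters into exactly the pattern $yxy$.

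Concretely, the derivation would proceed in two steps. First, I apply \eqref{eq:insideout} with $n=1$ and $m=0$ to obtain
\[
xyzx \feq y \cdot xyzx \feq yxyzx.
\]
Second, I read the main law \eqref{eq:main2} in the direction $yxy \feq xyx$ and apply it to the prefix $yxy$ of $yxyzx$, which yields
\[
yxyzx \feq (xyx)zx \equiv xyxzx.
\]
Chaining the two equalities gives $xyzx \feq xyxzx$, as required.

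There is no real obstacle here beyond the initial idea: once one notices that Inside Out is exactly the tool that exposes a $yxy$ subword without altering the rest of the word, the proof is a two-line calculation. In particular, no use is made of periodicity~\eqref{eq:period2} or of the symmetric version \eqref{eq:insideout2}; the single application of \eqref{eq:insideout} with minimal exponents suffices.
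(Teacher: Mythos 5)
Your proof is correct and is essentially the paper's own argument: the paper likewise prepends a single $y$ (reaching $yxyzx$, written there as $babca$) and collapses the prefix $yxy$ to $xyx$ via \eqref{eq:main2}, merely re-deriving the $n=1$, $m=0$ instance of \eqref{eq:insideout} explicitly through $b^4$-padding and \eqref{eq:period2} rather than citing the lemma directly as you do. Your shortcut is legitimate since \eqref{eq:insideout} is stated for all $n,m\geq 0$ (though note periodicity is still used implicitly, inside the proof of that lemma).
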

\begin{proof} We have
\[
abca\rfeq{eq:insideout}[b^4]abcab^4\rfeq{eq:period2}\underline{b[b^4}abcab^4]\rfeq{eq:insideout}[bab]ca\rfeq{eq:main2}\underline{aba}ca.
\]
\end{proof}

\begin{lemma}[Bump up bracket powers]
For any $n,m\geq 1$\up:
\begin{equation}
xyzx\feq x^nyzx^m\label{eq:bumpupbracket}
\end{equation}
\end{lemma}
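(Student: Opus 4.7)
The plan is to derive \eqref{eq:bumpupbracket} by mirroring the four-step template used to prove \eqref{eq:outsidein}: pad the word on both sides via \eqref{eq:insideout} or \eqref{eq:insideout2}, use \eqref{eq:period2} to release or absorb an extra copy of a letter, apply \eqref{eq:main2} to an emergent $\alpha\beta\alpha$ subpattern, and then contract the padding back.

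The first step I would carry out is the ``periodic regime'' reduction. Substituting $y\mapsto x$ in \eqref{eq:insideout} yields $x^2 zx = x^{n+2} z x^{m+1}$ for all $n,m\geq 0$, and a further substitution $z\mapsto yz$ gives $x^2 yzx = x^{n+2}\,yz\,x^{m+1}$. Taking $n=2$, $m=3$ and invoking \eqref{eq:period2} to collapse $x^{\geq 4}$ to $x^4$, this delivers the key equality $x^2 yzx = x^4\,yz\,x^4$. Because $x\cdot x^4 = x^5 = x^4$ by \eqref{eq:period2}, the leading $x^4$ on the right-hand side absorbs any additional prefixed $x$, so one may conclude $x^n yzx = x^2 yzx$ for every $n\geq 2$; by the mirror argument using \eqref{eq:insideout2} and substitution $z\mapsto zy$, one obtains $xyzx^m = xyzx^2$ for every $m\geq 2$. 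Combining the two yields $x^n yzx^m = x^n yzx = x^2 yzx$ whenever $(n,m)\neq (1,1)$ and $n\geq 2$, and the analogous statement with the roles of the two exponents exchanged.

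The second step, which I regard as the true crux, is to bridge the single remaining ``edge case'' $xyzx = x^2 yzx$ (equivalently $xyzx = xyzx^2$). My plan for this step is to imitate the derivation of \eqref{eq:outsidein} almost verbatim: begin with $xyzx\stackrel{\eqref{eq:insideout}}{=}y^4\,xyzx\,y^4$, then use \eqref{eq:period2} to peel off a free $y$ as in the proof of \eqref{eq:outsidein}, next recognise and rewrite a sub-pattern of the form $\alpha\beta\alpha\to\beta\alpha\beta$ via \eqref{eq:main2} whose net effect is to plant an extra copy of $x$ at the outside of the word rather than (as in the \eqref{eq:outsidein} proof) at the inside, and finally reabsorb the $y^4$ padding via \eqref{eq:insideout}. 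Once the single identity $xyzx = x^2 yzx$ is in hand, combining it with the output of the first step yields $xyzx = x^n yzx^m$ for all $n,m\geq 1$.

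The main obstacle is precisely this second step. The two prior bridging lemmas \eqref{eq:insideout} and \eqref{eq:outsidein} only create new letters in the \emph{interior} of the word, whereas the present claim demands that a newly generated $x$ appear on the \emph{boundary}; the delicate choice of substitution into \eqref{eq:main2} (analogous to the $bab\to aba$ used in the proof of \eqref{eq:outsidein}) must therefore be engineered so that the swap deposits the extra $x$ at the outermost position. This is the one step that cannot be massaged by periodicity alone and is what the proof will have to address with care.
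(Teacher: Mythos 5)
Your first step is correct but it never touches the base word: substituting $y\mapsto x$ into \eqref{eq:insideout} and invoking \eqref{eq:period2} does show that all the words $x^nyzx^m$ with $n\geq 2$ or $m\geq 2$ are mutually equal (to $x^4yzx^4$), but the lemma is precisely the assertion that $xyzx$ itself equals these words, so the entire content rests on your ``edge case'' $xyzx\feq x^2yzx$. For that step you offer only an intention --- ``recognise and rewrite a sub-pattern $\alpha\beta\alpha\to\beta\alpha\beta$ via \eqref{eq:main2} whose net effect is to plant an extra copy of $x$ at the outside'' --- without exhibiting the substitution that achieves it, and you yourself flag it as the unresolved obstacle. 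That is a genuine gap, not a routine verification: \eqref{eq:main2} applied to a subword only repositions letters in the interior of that subword, which is exactly why a boundary-creating move is needed; moreover, once $xyzx\feq x\cdot xyzx$ and its mirror are available, simple iteration already gives \eqref{eq:bumpupbracket} for all $n,m\geq 1$, so your step 1 is dispensable and step 2 is everything.

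For comparison, the paper closes exactly this gap with a short re-bracketing trick that uses neither \eqref{eq:main2} nor \eqref{eq:period2} nor \eqref{eq:outsidein}: pad $abca\rfeq{eq:insideout}babcab$, and then \emph{re-read} the padded word $babcab$ as a word of the form $XYZX$ with $X\equiv b$, $Y\equiv a$, $Z\equiv bca$; applying \eqref{eq:insideout} again, now with respect to the letter $a$ sitting in the $Y$-slot (with exponents $1$ and $0$), prepends an $a$ at the boundary, $babcab\feq a\,babcab$, and unpadding the bracketed part via \eqref{eq:insideout} once more yields $a\,babcab\feq a\,abca$, i.e.\ $xyzx\feq x^2yzx$; the right-hand version follows by symmetry and iteration gives all $n,m\geq 1$. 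If you wish to complete your argument along your own lines, this re-parsing of the padded word (or some explicit substitute for it) is the missing idea you still have to supply.
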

\begin{proof}
$[abca]\rfeq{eq:insideout}[babcab]\rfeq{eq:insideout}a[babcab]\rfeq{eq:insideout}a\underline{abca}$.  The law $abca= abcaa$ follows by symmetry.
\end{proof}
\begin{lemma}[Inside commuting]
\begin{equation}
xyzx\feq xzyx\label{eq:commuting2}
\end{equation}
\end{lemma}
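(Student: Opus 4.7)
My plan is to derive an auxiliary ``cycling'' identity $xyzx\feq yzxy$ from \eqref{eq:main2} and \eqref{eq:insideout}, and then to chain it with \eqref{eq:outsidein} through a sequence of length-five intermediate words to go from $abca$ to $acba$.

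First I would establish the cycling identity. Applying \eqref{eq:main2} with the substitution that replaces the letter $y$ by the two-letter word $yz$ gives $x(yz)x\feq(yz)x(yz)$, i.e., $xyzx\feq yzxyz$. On the other hand $yzxy$ is itself an instance of the pattern $xyzx$ (with the outer ``$x$'' played by $y$ and the inner letters being $z$ and $x$), so \eqref{eq:insideout} applied to this instance permits appending a single $z$ without change: $yzxy\feq yzxy\cdot z=yzxyz$. Combining these two displays yields the cycling identity $xyzx\feq yzxy$.

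With cycling in hand, the chain I propose is
\[
abca\rfeq{eq:outsidein} abaca\feq bacab\feq acaba\rfeq{eq:outsidein} acba.
\]
The first equality is \eqref{eq:outsidein} applied with $(x,y,z)\mapsto(a,b,c)$, inserting an $a$ between $b$ and $c$; the last is \eqref{eq:outsidein} read in reverse, recognising $acaba$ as the $xyxzx$ form with $(x,y,z)\mapsto(a,c,b)$ and contracting the middle $a$. The two middle equalities are cycling: $abaca$ is the pattern $XYZX$ with $(X,Y,Z)=(a,b,ac)$, whose cycled form is $YZXY=bacab$; and $bacab$ is $XYZX$ with $(X,Y,Z)=(b,a,ca)$, whose cycled form is $YZXY=acaba$.

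The chief obstacle is selecting the right $(X,Y,Z)$ at each cycling step, so that the cycled form lands in a shape that admits the next cycling or contraction. The successive inner words $Z=ac$ and $Z=ca$ are chosen precisely so that two applications of cycling effect a swap of $b$ and $c$ between the $a$-brackets, setting the word up for the final contraction via \eqref{eq:outsidein}. Once the cycling identity is isolated, everything else reduces to routine pattern matching.
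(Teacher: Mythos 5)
Your proof is correct, and it takes a genuinely different route from the paper's. You first isolate a ``cycling'' identity $xyzx\feq yzxy$, obtained by combining the substitution instance $x(yz)x\feq (yz)x(yz)$ of \eqref{eq:main2} with the instance of \eqref{eq:insideout} (pattern $XYZX$ with $X=y$, $Y=z$, $Z=x$, $n=0$, $m=1$) that appends the letter playing the role of $Y$; both steps check out, and since the cycling law is then an identity of the variety it may legitimately be applied with multi-letter substitutions such as $Z=ac$ and $Z=ca$. Your chain $abca\rfeq{eq:outsidein}abaca\feq bacab\feq acaba\rfeq{eq:outsidein}acba$ then verifies letter by letter, uses only \eqref{eq:main2}, \eqref{eq:insideout} and \eqref{eq:outsidein} (all established before Inside commuting, so there is no circularity), and lands directly on $acba$. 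The paper instead stays inside its bracket-manipulation toolbox: it expands $abca$ via \eqref{eq:bumpupbracket} and \eqref{eq:outsidein} to $aabaaca$, applies \eqref{eq:main2} three times to carefully chosen subwords, contracts again, and finishes with a symmetry remark. Your argument is shorter, avoids \eqref{eq:bumpupbracket} and \eqref{eq:period2} altogether, and the cycling identity is a clean reusable tool; the paper's computation, while more opaque, illustrates the in-place rewriting style used throughout that subsection.
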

\begin{proof}
First
\begin{align*}
[abca]&\rfeq{eq:bumpupbracket}
a[abca] \rfeq{eq:outsidein}
[a\underline{abaaca}]\\
&\rfeq{eq:main2}
abaac\, a\, abaac\equiv [aba\, aca\, aba] ac\\
&\rfeq{eq:main2}
\underline{aca\, aba\, a[ca}\, ac]\\
&\rfeq{eq:main2}
[aca aba a \underline{aaca]a}\\
&\rfeq{eq:outsidein}\cdots\rfeq{eq:outsidein}
[\underline{acbca}a]\rfeq{eq:bumpupbracket}
acbca.
\end{align*}
Then by symmetry we have $xyx\feq yxy\vdash abca\feq acba$ as required.
\end{proof}

Once the bracketed part is more than one symbol in length, we can independently bump up the powers inside.
\begin{lemma}[Bumping up inner powers]
For $n,m>1$\up:
\begin{equation}
xyzx\feq xy^nz^mx \label{eq:bumpupcenter}
\end{equation}
\end{lemma}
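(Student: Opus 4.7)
My plan is to prove the stronger identity $xyzx\feq xy^nz^mx$ for all $n,m\geq 1$, which subsumes the stated lemma. The argument has three pieces: first, an auxiliary identity $xyzx\feq xyzxzx$; second, a reduction of $xyzxzx$ back down to a length-five word $xy^2zx$ using substituted forms of the preceding lemmas; and third, iteration and symmetry to obtain arbitrary $n,m$.

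For the auxiliary identity, \eqref{eq:outsidein} gives $xyzx\feq xyxzx$, and then applying \eqref{eq:main2} in the form $xzx\feq zxz$ to the subword at positions 3--5 yields $xyxzx\feq xyzxz$. Treating this as a rewriting rule on the word $xyzx\cdot x=xyzxx$ and using \eqref{eq:bumpupbracket} to collapse $xyzxx$ back to $xyzx$, I obtain $xyzx\feq xyzxzx$. For the reduction, the first and last letters of $xyzxzx$ are both $x$, so the substituted form of \eqref{eq:commuting2} (replacing the variable $y$ by the word $yz$ and $z$ by the word $xz$) gives $x(yz)(xz)x\feq x(xz)(yz)x\equiv x^2zyzx$. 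A substituted form of \eqref{eq:bumpupbracket} reduces this to $xzyzx$; \eqref{eq:main2} turns $zyz$ into $yzy$ yielding $xyzyx$; and a final substituted \eqref{eq:commuting2} (splitting $yzy$ as $(yz)(y)$ and swapping) gives $xy^2zx$. Hence $xyzx\feq xy^2zx$, and the symmetric identity $xyzx\feq xyz^2x$ follows by $xyzx\rfeq{eq:commuting2}xzyx\feq xz^2yx\rfeq{eq:commuting2}xyz^2x$, with the middle equality being the base case with $y$ and $z$ renamed.

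For the iteration, substituting $z\mapsto y^{n-2}z$ (for $n\geq 2$) in $xyzx\feq xy^2zx$ yields $xy^{n-1}zx\feq xy^nzx$; chaining from $n=2$ upward gives $xyzx\feq xy^nzx$ for all $n\geq 1$. A parallel argument yields $xyzx\feq xyz^mx$, and substituting $y\mapsto y^n$ in this last identity gives $xy^nzx\feq xy^nz^mx$; chaining all three together produces the desired $xyzx\feq xy^nz^mx$. The main obstacle is the middle step: the crucial observation is that $xyzxzx$ can be bracketed as $x(yz)(xz)x$ and rearranged by generalized inside-commuting into $x^2zyzx$, after which the remaining reductions via \eqref{eq:bumpupbracket}, the main law, and one more inside-commuting application are essentially forced.
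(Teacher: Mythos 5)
Your derivation is correct, and every lemma you invoke (\eqref{eq:outsidein}, \eqref{eq:main2}, \eqref{eq:bumpupbracket}, \eqref{eq:commuting2}) is established in the paper before this point, so there is no circularity; each substitution instance you use (e.g.\ reading $xyzxzx$ as $x(yz)(xz)x$ and $xzyzx$ as $x(z)(yz)x$) is legitimate. However, your route is genuinely different from the paper's. The paper never passes through $xyzx\feq xy^2zx$: it wraps the word in an outer $y$-bracket using \eqref{eq:insideout}, so $xyzx\feq y\,xyzx\,y$, then iterates \eqref{eq:outsidein} inside that bracket to accumulate the power of $y$ in one pass, and finally strips the bracket with \eqref{eq:insideout} again, obtaining $xy^nzx$ directly (the $z$-power being symmetric); notably it does not use the inside-commuting law \eqref{eq:commuting2} at all for this step. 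You instead avoid \eqref{eq:insideout} entirely, first establishing the auxiliary identity $xyzx\feq xyzxzx$ and reducing it via two substituted applications of \eqref{eq:commuting2} together with \eqref{eq:bumpupbracket} and the main law to the single-step bump $xyzx\feq xy^2zx$, and then generating all higher powers by substitution and chaining. What your approach buys is modularity and a marginally stronger statement (valid for all $n,m\geq 1$, so in particular $xyzx\feq xyz^2x$ with the other exponent equal to $1$), whereas the paper's argument is shorter and produces the full power $y^n$ in a single iterative sweep.
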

\begin{proof}
Inner part to the outer bracket, iterated insertion of the bracket, then removing bracket.
\[
[abca]\rfeq{eq:insideout}[babcab]\rfeq{eq:outsidein}\ldots\rfeq{eq:outsidein}[bab(b)^{n-1}cab]\rfeq{eq:insideout}ab(b)^{n-1}ca\equiv  ab^{n}ca.% \qedhere
\]
\end{proof}

\begin{lemma}[Inside commuting 2]
For $u,v,w$ either variables or possibly empty\up:
\begin{equation}
xuyvzwx\feq xuzvywx\label{eq:commuting22}
\end{equation}
\end{lemma}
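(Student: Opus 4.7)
The plan is to reduce the statement to iterated adjacent transpositions between interior letters of an $x$-bracket, built out of two laws already in hand.  Observing that \eqref{eq:commuting2} is an equational identity in three variables, substituting arbitrary words for $y$ and $z$ gives the \emph{bracket flip} $x\alpha\beta x\feq x\beta\alpha x$ for any words $\alpha,\beta$.  Similarly, substitution in \eqref{eq:outsidein} gives $x\alpha\beta x\feq x\alpha x\beta x$, which we read left-to-right as \emph{inserting} an interior $x$ and right-to-left as \emph{removing} one.

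From these I would first prove an adjacent-swap sub-lemma: for any words $\alpha,\beta$ and any letters $a,b$,
\[
x\alpha ab\beta x\feq x\alpha ba\beta x.
\]
The derivation is: insert two interior $x$'s using \eqref{eq:outsidein} to isolate $ab$ in its own bracket, apply the bracket flip $xabx\feq xbax$ to that inner bracket, then remove the two inserted $x$'s by reversing \eqref{eq:outsidein} twice.  Given the sub-lemma, the main identity is a sequence of three adjacent swaps,
\[
xuyvzwx\feq xuvyzwx\feq xuvzywx\feq xuzvywx,
\]
swapping $(y,v)$, then $(y,z)$, then $(v,z)$ in turn, each step a single invocation of the sub-lemma with the obvious choice of $\alpha,\beta$.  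When one or more of $u,v,w$ is empty, the same three-step sequence still works, with the corresponding $\alpha$ or $\beta$ simply empty and certain swaps either trivial or reducing directly to \eqref{eq:commuting2}.

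The only conceptual point is to recognise that the bracket flip alone produces merely cyclic rotations of a bracket's interior, but combined with the freedom to insert and delete interior $x$'s afforded by \eqref{eq:outsidein}, it generates every adjacent transposition, and hence every permutation, of the interior letters.  Once this is seen, the proof is a routine unwinding and the base axiom \eqref{eq:main2} is not needed again beyond what was already used to establish \eqref{eq:commuting2} and \eqref{eq:outsidein}.
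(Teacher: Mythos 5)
Your strategy is exactly the paper's: realise the transposition as (a composition of) adjacent swaps, and obtain each adjacent swap by using \eqref{eq:outsidein} to isolate the adjacent pair inside an inner $x$-bracket, flipping that bracket with \eqref{eq:commuting2}, and then undoing the insertions. The one place where your write-up is not yet a proof is the treatment of the empty cases, which you dismiss as ``trivial or reducing directly to \eqref{eq:commuting2}''. That dismissal fails for the mixed cases where exactly one of the two flanking words is empty: for instance $xyzwx=xzywx$ (the swap needed when $u,v$ are empty but $w$ is not) is \emph{not} a substitution instance of $xyzx=xzyx$, and since all interior letters are distinct it contains no proper subword of the form $p\,\bu\,p$ to which \eqref{eq:commuting2} could be applied; likewise your two-insertion derivation of the sub-lemma breaks down there, because \eqref{eq:outsidein} cannot insert an interior $x$ next to an empty flank (one may not substitute the empty word for a variable in a semigroup identity). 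The gap is easily closed, in either of two ways: use the outer $x$ itself as one delimiter, i.e.\ apply \eqref{eq:outsidein} once with the pair $ab$ as one block and the nonempty flank as the other, flip the resulting prefix (or suffix) bracket $xabx$ by \eqref{eq:commuting2}, and undo; or do what the paper does and call on \eqref{eq:bumpupbracket} to manufacture the missing interior $x$ when $u$ or $w$ is empty. With that repair your argument coincides with the paper's proof.
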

\begin{proof}
It suffices to show that $xuyzwx\feq xuyzwx$ where $u,w$ are possibly empty, as this enables commutativity between any two occurrences of a variable (and $xuyvzwx\feq xuzvywx$ follows).

We have $aubcwa\feq auabcawa$ by \eqref{eq:outsidein} if $u,w$ are nonempty, or by \eqref{eq:bumpupbracket} when one of $u,w$ is empty.  Then $au[abca]wa\rfeq{eq:commuting2}[au\underline{acba}wa]\rfeq{eq:outsidein}aubcwa$, where again \eqref{eq:bumpupbracket} is used in place of \eqref{eq:outsidein} when $u$ or $w$ is empty.
\end{proof}

\begin{lemma}[Leapfrog]
Assume that $u,v,w$ are either variables or empty, with $uvw$ not empty.  Then
\begin{equation}
xyxy\feq xyyx\qquad\text{and}\qquad
xuyvxwy\feq xuvwyx\label{eq:leapfrog}
\end{equation}
\end{lemma}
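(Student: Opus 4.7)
The plan is to prove both identities using the accumulated machinery of main2, outsidein, bumpupbracket, commuting2, and commuting22.

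For the first identity $xyxy\feq xyyx$, I propose showing that both sides equal the common word $x^2yx^2$.  From the left, main2 applied to $yxy$ at positions 2--4 of $xyxy$ yields $x(yxy)\feq x(xyx)\feq x^2yx$, and bumpupbracket applied to the bracket $[x..x]$ with middle $xy$ then gives $x^2yx\feq x^2yx^2$.  From the right, outsidein with $z=y$ expands $xyyx\feq xyxyx$, after which main2 on the subword $yxy$ at positions 2--4 of this length-$5$ word yields $xyxyx\feq x(yxy)x\feq x(xyx)x\feq x^2yx^2$.  Chaining the two computations produces $xyxy\feq xyyx$.

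For the second identity $xuyvxwy\feq xuvwyx$ under the premise $uvw\neq\emptyset$, the plan has four steps.  \emph{Step (1):} inside the $[y..y]$ bracket at positions $3$ and $7$, apply commuting22 to swap $x$ and $w$ across the separator $v$ in the middle $vxw$, giving $xuyvwxy$.  \emph{Step (2):} inside the $[x..x]$ bracket, now at positions $1$ and $6$, rearrange the middle $uyvw$ into $uvwy$ by sliding $y$ to the rightmost interior slot; the required 3-cycle factors as two commuting22 transpositions---first swap $y$ and $w$ across the separator $v$ to obtain middle $uwvy$, then swap the adjacent $w$ and $v$ with $y$ as the right-context to obtain $uvwy$---yielding $xuvwyxy$.  \emph{Step (3):} apply main2 to the trailing $yxy$ block to obtain $xuvw(yxy)\feq xuvw(xyx)\feq xuvwxyx$.  \emph{Step (4):} apply outsidein in reverse to the bracket $[x..x]$ of $xuvwxyx$, viewing the word as $x\cdot uvw\cdot x\cdot y\cdot x$ and removing the interior $x$, landing on $xuvwyx$.

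The principal obstacle I anticipate is Step (2): the adjacent transposition that would directly move $y$ past $w$ in the middle $uvyw$ is not available as a single commuting22 application, since it would require a two-letter left context.  This is resolved by factoring the required permutation as two admissible transpositions as described above---one non-adjacent swap over the separator $v$, then one adjacent swap with $y$ used as the right suffix.  Several edge cases need dispatching: Step (1) is vacuous when $w$ is empty, Step (2) reduces to a single commuting22 (and sometimes just to commuting2) when the $[x..x]$ middle has fewer than four letters, and Step (4) applies outsidein with the block $uvw$ substituted for a variable---a valid move because outsidein is an equational consequence and therefore closed under arbitrary word-for-variable substitutions.  The non-emptiness hypothesis on $uvw$ is essential, for with all of $u,v,w$ empty the statement would degenerate to the false $xyxy\feq xyx$.
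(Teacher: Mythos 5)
Your proposal is correct, and both identities are established legitimately from the previously derived laws; I checked the bracket-matching in each application of \eqref{eq:commuting22} (the swapped letters and the paddings are single letters or empty, as that lemma requires), and your final step is a genuine substitution instance of \eqref{eq:outsidein} since $uvw$, being nonempty, may replace a variable of that derived identity. Your route differs from the paper's. The paper first proves the ``duplication'' identity $xuyvxwy\feq x\,uvwx\,y\,uvwx$ (two applications of \eqref{eq:commuting22} followed by \eqref{eq:main2}), valid whether or not $uvw$ is empty, and then collapses the repeated blocks: in the empty case via \eqref{eq:bumpupcenter} and \eqref{eq:bumpupbracket} to get $xyxy\feq xyyx$, and in the nonempty case via \eqref{eq:commuting22}, \eqref{eq:bumpupcenter} and \eqref{eq:bumpupbracket}. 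You instead funnel both sides of the first identity to the common form $x^2yx^2$ (using \eqref{eq:main2}, \eqref{eq:bumpupbracket} and \eqref{eq:outsidein}), and for the second identity you permute letters inside the $[y\cdots y]$ and $[x\cdots x]$ brackets with \eqref{eq:commuting22}, then collapse with \eqref{eq:main2} and a reverse application of \eqref{eq:outsidein} applied to the block $uvw$ treated as one variable. What your approach buys is that it avoids \eqref{eq:bumpupcenter} entirely, never creates duplicated copies of $uvw$ that must later be absorbed, and makes completely explicit where the hypothesis $uvw\neq\emptyset$ enters (a variable of a semigroup identity can only be replaced by a nonempty word); what the paper's approach buys is a single master computation that serves both halves of the statement, with the case split only at the very end. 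Your handling of the degenerate cases (some of $u,v,w$ empty, where a step becomes vacuous or a three-cycle collapses to one transposition) is also correct.
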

\begin{proof}
First observe that $[aubva]wb\rfeq{eq:commuting22}\underline{a[buva}wb]\rfeq{eq:commuting22}a\underline{b[uvwa]b}\rfeq{eq:main2}a\underline{uvwa\, b\, uvwa}$, regardless of whether or not $uvw$ is empty.  If $uvw$ is empty, then $[aaba]\rfeq{eq:bumpupcenter}[aabba]\rfeq{eq:bumpupbracket}abba$ as required.  If $uvw$ is nonempty, then
\[
[auvwabuvwa]\rfeq{eq:commuting22}a[auuvvwwba]\rfeq{eq:bumpupcenter}[a\underline{auvwba}]\rfeq{eq:bumpupbracket}auvwba.%\qedhere
\]
\end{proof}

\begin{lemma}[Evert]
For $u,v$ possibly empty:
\begin{equation}
xuyvx\feq yuxvy\label{eq:evert}
\end{equation}
\end{lemma}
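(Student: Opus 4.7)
The plan is to prove $xuyvx \feq yuxvy$ by case analysis on whether each of $u, v$ is empty; in the conventions of the preceding lemmas of this section, $u$ and $v$ are each either a single letter or empty, giving four cases. If both are empty, the claim is the defining identity~\eqref{eq:main2}. If exactly one is empty, the two subcases are interchanged by reading words in reverse (the defining identity $xyx=yxy$ is reversal-invariant), so it suffices to handle $u$ empty and $v = c$ a single letter; here I would carry out the chain
\[
xycx \rfeq{eq:commuting2} xcyx \rfeq{eq:leapfrog} xyxcy \rfeq{eq:main2} yxycy \rfeq{eq:outsidein} yxcy,
\]
in which the second step is~\eqref{eq:leapfrog} with $u = v$ empty and $w = c$, the third applies $xyx=yxy$ to the initial three letters, and the last reverses~\eqref{eq:outsidein} to collapse the $abaca$-pattern $yxycy$ down to $yxcy$.

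The main obstacle is the remaining case $u = a$, $v = b$ both single letters. The four-step chain above does not carry over, because no permutation of the middle $\{a,y,b\}$ of $xaybx$ contains a three-letter $XYX$-subword to which \eqref{eq:main2} can be applied letter-wise. My strategy here is to exploit the fact that~\eqref{eq:main2}, being a semigroup identity, permits substitution of arbitrary words for the variables $x$ and $y$: applying it at this ``word level'' with $X = x$ and $Y = ayb$ yields $xaybx \feq aybxayb$, and symmetrically with $X = axb$ and $Y = y$ yields $axbyaxb \feq yaxby$. The resulting seven-letter words have precisely the shape required for~\eqref{eq:leapfrog} with the distinguished outer letters being $a$ and $b$ (rather than the symbols $x$ and $y$ appearing in the statement of that lemma): this gives $aybxayb \feq ayxyba$ (with the choice $u=y, v=x, w=y$) and, in reverse, $axyxba \feq axbyaxb$ (with $u=x, v=y, w=x$). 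Between these two applications, a single letter-level use of~\eqref{eq:main2} on the interior $yxy$ subword of $ayxyba$ yields $ayxyba \feq axyxba$, closing the chain
\[
xaybx \feq aybxayb \feq ayxyba \feq axyxba \feq axbyaxb \feq yaxby.
\]

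The essential subtlety, and the potential pitfall, is that~\eqref{eq:main2} and~\eqref{eq:leapfrog} are being applied with their ``bracket'' variables instantiated to letters or compound words other than the symbols named in their statements; once one accepts that the distinguished pair in \eqref{eq:leapfrog} can be chosen to be $(a,b)$ instead of $(x,y)$, and that the two halves of \eqref{eq:main2} can be instantiated with words like $ayb$ and $axb$, the derivation is completely routine and no intermediate word exceeds seven letters in length.
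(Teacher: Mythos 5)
Your chains check out: in the mixed case $xycx \feq xcyx \feq xyxcy \feq yxycy \feq yxcy$ each step is a legitimate substitution instance of \eqref{eq:commuting2}, \eqref{eq:leapfrog}, \eqref{eq:main2} and \eqref{eq:outsidein}, and in the two-letter case the chain $xaybx \feq aybxayb \feq ayxyba \feq axyxba \feq axbyaxb \feq yaxby$ is valid, since the derived laws are identities and so may be instantiated with repeated letters (your $u=w=y$) or with compound words ($ayb$, $axb$) in place of their named variables. The route is genuinely different from the paper's, which makes no case split: there, one assumes WLOG $u$ nonempty, bumps the inner $y$ to $y^2$ via \eqref{eq:bumpupcenter}, rearranges with \eqref{eq:commuting22} to reach $x(yuvy)x$, applies \eqref{eq:main2} once at word level with $Y=yuvy$, and then reduces $yuvyxyuvy$ back to $yuxvy$ using \eqref{eq:commuting22} and \eqref{eq:bumpupcenter} again. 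So the shared key idea is the word-level use of \eqref{eq:main2}; you lean on \eqref{eq:leapfrog} (used twice, with the distinguished pair taken to be $(a,b)$) where the paper leans on the power-bumping and commuting lemmas, and you invoke reversal symmetry for one mixed case where the paper's argument treats $u,v$ uniformly as blocks. One small point worth making explicit: the lemma is later applied (in Lemma~\ref{lem:canonical2}) with $u,v$ arbitrary, possibly long, words, whereas you prove only the cases where $u,v$ are single letters or empty; this is sufficient, but only because what you prove are honest identities in distinct variables, so the general case follows by substituting words for those letters --- a sentence to that effect would close the argument cleanly.
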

\begin{proof}
For $uv$ empty, this is \eqref{eq:main2}.  Without loss of generality, assume that $u$ is nonempty (with $v$ either empty or nonempty).  Then
$xuyvx\rfeq{eq:bumpupcenter}xuyyvx\rfeq{eq:commuting22}xyuvyx\rfeq{eq:main2}yuvyxyuvy$.  Then applying \eqref{eq:commuting22} and \eqref{eq:bumpupcenter} reduces this word to $yuxvy$.
\end{proof}

A word ${\bf w}$ is said to be \emph{connected} if there are letters $x_1,\dots,x_n$ (for $n>1$) such that
\[
\bw\equiv x_1\cdots x_2\cdots x_1\cdots x_3\cdots x_2\cdots x_4\cdots \qquad \cdots x_{n}\cdots x_{n-1}\cdots x_n.
\]
When $n=1$ it is convenient to require that $\bw$ is of the form $x_1\cdots x_1$, and not simply $x_1$.
A connected word $\bw$ whose variables are $x_1,\dots,x_n$ is said to be in \emph{canonical form} if it satisfies the following.
\begin{enumerate}
\item[(i)] If $n=1$, then $\bw\in\{x_1^2,x_1^3,x_1^4\}$.
\item[(ii)] If $n=2$, then $\bw\in\{x_1x_2x_1,x_1x_2^2x_1\}$.
\item[(ii)] If $n>2$ then $\bw\equiv x_1x_2\cdots x_nx_1$.
\end{enumerate}
\begin{lemma}\label{lem:canonical2}
If $\bw$ is a connected word in alphabet $x_1,\dots,x_n$ then there is a word $\bw'$ in canonical form with ${\bf w}\rfeq{eq:main2}\bw'$.
\end{lemma}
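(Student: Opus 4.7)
I will prove Lemma~\ref{lem:canonical2} by strong induction on $|\bw|$, using the derived laws \eqref{eq:period2}--\eqref{eq:evert} as a toolkit.  For the case $n=1$, $\bw\equiv x_1^k$ with $k\ge 2$, and a single use of \eqref{eq:period2} reduces $k$ to $\min\{k,4\}$, placing $\bw$ in canonical form.  For $n\ge 2$ my overall strategy is to first reduce $\bw$ to the form $x_1\bv x_1$ for some word $\bv$ on $\{x_2,\dots,x_n\}$, and then handle $\bv$ separately according to whether $n=2$ or $n\ge 3$.

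To reach $x_1\bv x_1$: by connectedness, $x_1$ appears at least twice in $\bw$, and (after relabelling if needed) $x_1$ is the first letter to occur.  Using \eqref{eq:bumpupbracket} I consolidate any block of $x_1$'s at the leading or trailing position to a single occurrence, and any internal block to a single $x_1$ as well.  Any letter occurring to the right of the last $x_1$ can be transported into the bracket: such a letter $y$ must also appear somewhere between the first and last $x_1$ (otherwise $\bw$ would not satisfy the connectedness pattern), and so repeated use of \eqref{eq:insideout}, \eqref{eq:insideout2}, \eqref{eq:evert}, and \eqref{eq:commuting22} lets me absorb it.  Finally, any $x_1$'s sitting strictly inside the bracket can be eliminated by running \eqref{eq:outsidein} backwards, at the cost of local rearrangements handled by \eqref{eq:commuting22}.

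With $\bw=x_1\bv x_1$ in hand, the case $n=2$ gives $\bv\equiv x_2^k$ for some $k\ge 1$.  The derivations $x_1 x_2 x_1 x_2 x_1 \feq x_1^2 x_2 x_1^2 \feq x_1 x_2 x_1$ (from \eqref{eq:main2} plus \eqref{eq:bumpupbracket}) and $x_1 x_2 x_1 x_2 \feq x_1 x_2^2 x_1$ (the first clause of \eqref{eq:leapfrog}), together with \eqref{eq:period2} and the special case $y=z$ of \eqref{eq:bumpupcenter} (which equates $x_1 x_2^a x_1$ with $x_1 x_2^b x_1$ whenever $a,b\ge 2$), collapse every possible $x_1 x_2^k x_1$ arising this way to $x_1 x_2 x_1$ or $x_1 x_2^2 x_1$; the two forms appear because of the essentially parity-type invariant that survives these reductions.

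For $n\ge 3$, I first use \eqref{eq:commuting22} to sort the letters of $\bv$ into the standard order $x_2,x_3,\dots,x_n$ (by first occurrence), and then reduce each letter's multiplicity in $\bv$ to exactly one, ultimately obtaining $\bv\equiv x_2 x_3\cdots x_n$.  The main obstacle is the multiplicity reduction: the law \eqref{eq:bumpupcenter} equates $xyzx$ with $xy^n z^m x$ only when both $n,m>1$, so it does not directly lower a power from $2$ to $1$.  The fix is a pivot argument that relies on having at least two distinct letters available in the interior (which holds because $n-1\ge 2$): given a letter $y$ of multiplicity $\ge 2$ in $\bv$, I employ a third letter $z$ as a pivot and use \eqref{eq:evert}, \eqref{eq:leapfrog}, and \eqref{eq:commuting22} to reshape the word into one where \eqref{eq:bumpupcenter} can be applied to both the $y$- and $z$-exponents simultaneously, collapsing the surplus; this pivot trick is exactly what is unavailable for $n=2$ and so accounts for the persistence of two canonical forms there.
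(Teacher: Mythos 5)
Your reduction of $\bw$ to the bracketed form $x_1\bv x_1$ rests on the claim that every letter occurring to the right of the last $x_1$ must also occur between the first and last occurrences of $x_1$, ``otherwise $\bw$ would not satisfy the connectedness pattern''. That claim is false, and it is exactly where the difficulty of the lemma lies. Take $\bw\equiv x_1x_2x_1x_3x_2x_3$, which is precisely the defining connectedness pattern for $n=3$ (with all gaps empty): the last $x_1$ is in position $3$, and $x_3$ occurs only to its right. Connectedness is a chain-overlap condition, not a nesting condition, so letters introduced late in the chain need never appear inside the span of $x_1$. This breaks your absorption mechanism: the laws \eqref{eq:insideout}, \eqref{eq:insideout2}, \eqref{eq:evert} and \eqref{eq:commuting22} only manipulate a letter that already occurs strictly inside an $x_1$-bracket (and \eqref{eq:commuting22} cannot even reorder the tail $x_3x_2x_3$, which lies outside any bracket), so none of them can pull $x_3$ into the bracket or delete it. What is missing is precisely the second clause of \eqref{eq:leapfrog}, $xuyvxwy\feq xuvwyx$: it jumps the final occurrence of the leading letter rightwards past a letter $y$ straddling that occurrence (such a $y$ exists at every stage, since otherwise the word would factor into two blocks with disjoint content, contradicting connectedness), and iterating this is how the paper arrives at the form $x_i\bu x_i$ before ever invoking \eqref{eq:commuting22}, \eqref{eq:bumpupcenter} and \eqref{eq:bumpupbracket}. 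In your proposal \eqref{eq:leapfrog} is used only in the two-letter power analysis, so the central step of the proof is unsupported.

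The remaining parts are minor and close to the paper's argument: the $n=1$ case, the sorting and exponent normalisation for $n\geq 3$ (where, incidentally, no third ``pivot'' letter is needed: since the derived laws may be instantiated with words, two applications of \eqref{eq:bumpupcenter}, e.g.\ $x_1x_2^2x_3x_1\feq x_1x_2^4x_3^2x_1\feq x_1x_2x_3x_1$, already lower a single exponent), and your two surviving forms for $n=2$ agree with the canonical forms. Note also that consolidating an \emph{internal} block of $x_1$'s is not an instance of \eqref{eq:bumpupbracket}; your later appeal to \eqref{eq:outsidein} read from right to left is the correct tool there. But as written, the passage from a general connected word to $x_1\bv x_1$ --- the heart of the lemma --- has a genuine gap.
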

\begin{proof}
Let $\bw$ be a connected word in the alphabet $x_1,\dots,x_n$ (all letters appearing).  If $n=1$ the lemma follows immediately from \eqref{eq:period2}.
Now assume $n>1$.  Let $x_i$ be the first letter appearing in $\bw$.  Repeated left-to-right applications of \eqref{eq:leapfrog} will move the final occurrence of $x_i$ further right, eventually resulting in a word $\bw'$ of the form $\bw'\equiv x_i\bu x_i$, where $\bw'$ has the same alphabet as $\bw$.  If $i\neq 1$, then we may write $\bu\equiv x_i\bu_1x_1\bu_2x_i$, where $\bu_1,\bu_2$ are possibly empty.  Then, $\bw\feq \bw'\rfeq{eq:evert}x_1\bu_1x_i\bu_2x_1$.
Then use \eqref{eq:commuting22} to rearrange $\bu_1x_i\bu_2$ into the form $x_1^{i_1}\cdots x_n^{i_n}$, where $i_1\geq 0$ and $i_j\geq 1$ for each $j>1$.  If $n>2$, then we may use \eqref{eq:bumpupcenter} and \eqref{eq:bumpupbracket} to obtain $\bw= x_1x_2\cdots x_nx_1$.   If $n=2$, then we have $\bw= x_1x_2x_1$ or $\bw= x_1x_1^{i_1}x_2^{i_2}x_1$.  If $i_1>0$, then applying \eqref{eq:bumpupcenter} and \eqref{eq:bumpupbracket} yields $\bw\feq x_1x_1x_2x_1$, from which we can further rearrange to $\bw\feq x_1[x_1x_2x_1]\rfeq{eq:main2}[x_1\underline{x_2x_1x_2}]\rfeq{eq:leapfrog}x_1x_2x_2x_1$, which is in canonical form.  If $i_1=0$, then we either have $\bw\equiv x_1x_2x_1$ already in canonical form, or $i_2>1$ and then we have $\bw\equiv x_1x_2^{i_2}x_1\rfeq{eq:bumpupcenter}x_1x_2x_2x_1$, also in canonical form.
\end{proof}
Now let $\bw$ be a not necessarily connected word.  Then there is a unique decomposition into a product of connected subwords of maximal length and variables that appear just once in $\bw$; that is there is an $n$ such that $\bw\equiv \bw_1\bw_2\cdots\bw_n$ with each $\bw_i$ is either a letter appearing just once in $\bw$, or a connected word, and such that $\con(\bw_i)\cap \con(\bw_j)=\varnothing$ whenever $i\neq j$.  We say that $\bw$ is in \emph{canonical form} provided that each $\bw_i$ is in canonical form or is an individual letter.  It will be a consequence of the proof of Theorem~\ref{thm:xyx_yxy} below that distinct canonical forms do not form an identity following from $xyx= yxy$.

We consider the semigroup ${\bf C}$ given in Table \ref{table:C}.
\begin{table}
\begin{tabular}{c|ccccccccccc}
$*$  &0&1&2&3&4&5&6&7&8&9&10\\
\hline
0&0&0&0&0&0&0&0&0&0&0&0\\
1&0&1&6&5&7&5&6&7&0&10&10\\
2&0&0&4&8&5&0&0&0&9&5&0\\
3&0&0&0&3&0&0&0&0&0&0&0\\
4&0&0&5&9&0&0&0&0&5&0&0\\
5&0&0&0&5&0&0&0&0&0&0&0\\
6&0&0&7&0&5&0&0&0&10&5&0\\
7&0&0&5&10&0&0&0&0&5&0&0\\
8&0&0&0&8&0&0&0&0&0&0&0\\
9&0&0&0&9&0&0&0&0&0&0&0\\
10&0&0&0&10&0&0&0&0&0&0&0
\end{tabular}
\caption{The semigroup ${\bf C}$, a generator for $\ldbrack xyx=yxy\rdbrack$}\label{table:C}
\end{table}
The semigroup  ${\bf C}$ is isomorphic to the semigroup with presentation $\langle a,b,c\mid aa=a,b^4=0,cc=c,ba=cb=ca=abc=0,ab^3=b^3=b^3c=ac\rangle$.  To see this, first observe the relations in the presentation ensure that a nonzero product is always in nondecreasing alphabetical order, and then index laws $bbbb=0$ and $aa=a,cc=c$ and extra collapses $abc=0,ab^3=b^3=b^3c=ac$ ensure that there are exactly $11$ elements:
\[
0=abc,a=a^2,b,c=c^2,
bb,bbb=ac=abbb=bbbc=abbbc,
ab,abb,
bc,bbc,
abbc
\]
The map taking each element in this list to its numerical position in the list is an isomorphism onto ${\bf C}$ (that is, $0\mapsto 0$, $a\mapsto 1$, $b\mapsto 2$ and so on).  The semigroup ${\bf C}$ was found by hand: starting with the $3$-generated free algebra, successive quotients and subsemigroups were taken.  This led to a 16 element example.  In private communication, Edmond W.H. Lee observed that there were further quotients possible, and this eventually led to the current example.

To see that ${\bf C}\models xyx= yxy$, note that the only nonzero evaluations are $\theta(x)=\theta(y)\in\{1,2,3\}$ (in which case $\theta(xyx)=\theta(yxy)\in\{1,5,3\}$).  Note also that the subsemigroup on $\{1,3,5,0\}$ is the well-studied semigroup $A_0$, whose equational properties have some similarity to the those following from $xyx\feq yxy$.  
\begin{lemma}[Lee~\cite{lee}] \label{lem:A0}
Let $\bu\equiv \bu_1\cdots\bu_m$ and $\bv\equiv \bv_1\cdots\bv_n$ be a pair of words, where $\bu_1,\dots,\bu_m$ \up(and $\bv_1,\dots,\bv_n$ respectively\up) are pairwise disjoint words, each of which is either connected or a singleton.
Then $A_0\models u= v$ if and only if $m=n$ and $A_0\models \bu_i= \bv_i$.
Moreover,
\begin{enumerate}
\item if $\bu_i$ is a singleton, then $A_0\models \bu_i= \bv_i$ implies $\bu_i\equiv \bv_i$;
\item if $\bu_i$ is connected, then $A_0\models \bu_i= \bv_i$ if and only if $\con(\bu_i)=\con(\bv_i)$.
\end{enumerate}
\end{lemma}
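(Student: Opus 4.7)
The backward direction is essentially immediate. Because the alphabets of the $\bu_i$'s are pairwise disjoint (and similarly for the $\bv_j$'s), any substitution $\theta$ factors as $\theta(\bu)=\theta(\bu_1)\cdots\theta(\bu_m)$ and $\theta(\bv)=\theta(\bv_1)\cdots\theta(\bv_n)$; if $m=n$ and each $\theta(\bu_i)=\theta(\bv_i)$ in $A_0$, the products agree.

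The forward direction rests on the following structural claim $(\ast)$: if $\bw$ is a connected word on alphabet $Y$ and $\theta\colon Y\to A_0$, then $\theta(\bw)=0$ unless $\theta$ is constant on $Y$ with value in $\{a,b\}$, in which case $\theta(\bw)$ equals that constant; in particular, a connected word never evaluates to $ab$. I would prove $(\ast)$ by exploiting the interlocking chain $x_1\cdots x_2\cdots x_1\cdots x_3\cdots x_2\cdots x_n\cdots x_{n-1}\cdots x_n$ built into the definition of connectedness: if two adjacent letters of the chain receive different values from $\{a,b,ab\}$, or if any letter is mapped to $ab$, the interlocking forces one of the zero-producing consecutive products $b{\cdot}a$, $(ab){\cdot}a$, $b{\cdot}(ab)$ or $(ab){\cdot}(ab)$ to appear as a factor of the evaluation. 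The all-$ab$ case is handled separately because every connected word has length at least $2$, so $(ab)(ab)=0$ is forced.

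Granted $(\ast)$, the forward direction runs as follows. First, $\con(\bu)=\con(\bv)$, via the standard trick $\theta(x)=0$ and $\theta(y)=a$ for $y\ne x$. To match piece-partitions, suppose two letters $x,y$ lie in a common connected piece $\bu_i$ of $\bu$ but in distinct pieces $\bv_{j(x)},\bv_{j(y)}$ of $\bv$ with $j(x)<j(y)$. Define $\theta$ sending every letter of $\bv_1,\dots,\bv_{j(x)}$ to $a$ and every letter of $\bv_{j(x)+1},\dots,\bv_n$ to $b$. Then $\theta(\bv)=a\cdot b=ab$, while $\theta$ is mixed on $\con(\bu_i)$, so $(\ast)$ forces $\theta(\bu_i)=0$ and hence $\theta(\bu)=0$, contradicting $A_0\models\bu=\bv$. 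Thus the partitions $\{\con(\bu_i)\}$ and $\{\con(\bv_j)\}$ agree, and a symmetric prefix/suffix substitution on the $\bu$-pieces forces the pairing to respect left-to-right order, yielding $m=n$ and $\con(\bu_i)=\con(\bv_i)$ for all $i$.

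The full conclusion $A_0\models\bu_i=\bv_i$ then follows by restricting substitutions to $\con(\bu_i)=\con(\bv_i)$. Clause~(2) is immediate from $(\ast)$: for both pieces connected, $\theta(\bu_i)$ and $\theta(\bv_i)$ are determined in the same way by $\theta|_{\con(\bu_i)}$. For clause~(1), if $\bu_i$ is a singleton $x$ while $\bv_i$ is a connected piece (necessarily containing $x$, by the matched content), then setting $\theta(x)=ab$ and assigning the other letters of $\bv_i$ arbitrarily gives $\theta(\bu_i)=ab$ while $(\ast)$ prevents $\theta(\bv_i)$ from equaling $ab$, a contradiction; hence $\bv_i$ is a singleton, and the substitution $\theta(x)=a$, $\theta(y)=b$ for $y\ne x$ forces this singleton to be $x$ itself. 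The principal obstacle is the careful case analysis required to verify $(\ast)$: one must track how, under \emph{every} possible mix of $\{a,b,ab\}$-valued assignments along the chain $x_1,x_2,\dots,x_n$, a forbidden consecutive product is produced somewhere in the evaluation. Once $(\ast)$ is secured, the rest of the argument is merely the choice of the witnessing substitutions above.
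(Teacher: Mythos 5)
The paper gives no argument for this lemma at all: it is stated with the attribution to Lee~\cite{lee} and used as a black box, so there is no internal proof to compare yours against. Judged on its own, your strategy is sound. Your claim $(\ast)$ is true, and the cleanest way to nail it down is the observation that in $A_0=\{a,b,ab,0\}$ (with $a^2=a$, $b^2=b$, $ba=0$) a product of elements of $\{a,b,ab\}$ is nonzero exactly when the factor sequence has the shape (a block of $a$'s)(at most one $ab$)(a block of $b$'s); if a connected word evaluated to a nonzero element under an assignment that is nonconstant (or constant $ab$), this shape would split the word into two nonempty factors over disjoint alphabets, contradicting connectedness. Your adjacent-pair case analysis amounts to the same thing, and the substitutions you use to match alphabets, piece-partitions and their left-to-right order are exactly the right witnesses.

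The one step that needs repair is the sentence ``the full conclusion $A_0\models\bu_i=\bv_i$ then follows by restricting substitutions to $\con(\bu_i)=\con(\bv_i)$.'' You cannot cancel the context in $A_0$: for instance $a\cdot a\cdot b=a\cdot (ab)\cdot b=a\cdot b\cdot b=ab$, so knowing $\theta(\bu)=\theta(\bv)$ for extensions of a substitution defined on $\con(\bu_i)$ does not by itself yield $\theta(\bu_i)=\theta(\bv_i)$. The gap closes with tools you already have: once $\con(\bu_i)=\con(\bv_i)$, claim $(\ast)$ makes $A_0\models\bu_i=\bv_i$ automatic when both pieces are connected, and trivial when both are singletons; the only case to exclude is a singleton $x$ in one word matched against a connected power $x^k$ ($k\geq 2$) of the same letter in the other, and that case is ruled out \emph{in context} by the substitution sending $x\mapsto ab$, all letters of earlier pieces to $a$ and all letters of later pieces to $b$, which gives $ab$ on one side and $0$ on the other (only the distinction zero versus nonzero is needed, and that survives multiplication by the surrounding context). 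Note that your clause-(1) discussion already contains this substitution, just applied to the standalone pieces rather than inside $\bu$ and $\bv$, which is why as written the argument is slightly circular at that point. Incidentally, your proof establishes the correct reading of clause (2): taken literally it would fail for $\bu_i\equiv x^2$, $\bv_i\equiv x$, where the contents agree but $\theta(x)=ab$ separates the two sides, so (1) and (2) must be read with the singleton case taking precedence.
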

\begin{theorem}\label{thm:xyx_yxy}
The variety defined by $xyx\feq yxy$ is generated by ${\bf C}$.
\end{theorem}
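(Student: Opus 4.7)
The plan is to follow the scheme used in the proof of Theorem~\ref{thm:xyx_xyy}.  First one checks that $\mathbf{C}\models xyx=yxy$: a direct inspection of the multiplication table shows that every nonzero evaluation of either side forces $\theta(x)=\theta(y)\in\{1,2,3\}$, in which case both sides coincide.  The substantive task is the converse: if $xyx=yxy\not\vdash\bu=\bv$, then $\bu=\bv$ fails on $\mathbf{C}$.

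Decompose $\bu=\bu_1\cdots\bu_p$ and $\bv=\bv_1\cdots\bv_q$ into maximal connected subwords and singletons, as in the hypothesis of Lemma~\ref{lem:A0}.  If the block structures of $\bu$ and $\bv$ disagree --- that is, $p\neq q$, or some matched pair differs in type (connected versus singleton), or matched singletons are distinct letters, or matched connected blocks have different contents --- then Lemma~\ref{lem:A0} gives $A_0\not\models\bu=\bv$.  Since $\{0,1,3,5\}$ is a subsemigroup of $\mathbf{C}$ isomorphic to $A_0$, the identity already fails on $\mathbf{C}$.

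Otherwise the two decompositions have matching block structure and the matched connected blocks share content.  Apply Lemma~\ref{lem:canonical2} block by block to replace each connected $\bu_i,\bv_i$ by a canonical form.  Because $\bu=\bv$ does not follow from $xyx=yxy$, some position $i$ must have $\bu_i\not\feq\bv_i$; in particular their canonical forms lie in distinct $\ldbrack xyx=yxy\rdbrack$-equivalence classes with common content.  Set $n=|\con(\bu_i)|$.  If $n\geq 3$, then iterated use of \eqref{eq:commuting22} (to permute the inner letters), \eqref{eq:evert} (to change the outer letter), \eqref{eq:bumpupcenter}, and \eqref{eq:bumpupbracket} shows that every canonical form with content $\{x_1,\ldots,x_n\}$ is $xyx=yxy$-equivalent to $x_1x_2\cdots x_nx_1$, so this case cannot occur.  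If $n=1$, the three canonical forms $x^2$, $x^3$, $x^4$ are separated by $\theta(x)=2$, which maps them to the distinct elements $4,5,0$ of $\mathbf{C}$.  If $n=2$, the two equivalence classes are represented by $xyx$ and $xy^2x$; the identifying evaluation $\theta(x)=\theta(y)=2$ sends the representatives to $2^3=5$ and $2^4=0$ in $\mathbf{C}$.

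To promote the single-block distinction to a distinction of $\bu$ and $\bv$, extend $\theta$ by sending every letter appearing in $\bu_j$ for $j<i$ to $a=1$ and every letter appearing in $\bu_j$ for $j>i$ to $c=3$.  Because $a$ and $c$ are idempotent in $\mathbf{C}$, each such outer block contributes exactly $1$ or $3$ to the product, regardless of its shape and regardless of whether its own canonical form agrees with the other side's; those outer contributions therefore match on $\bu$ and $\bv$.  Consequently $\theta(\bu)=1\cdot\theta(\bu_i)\cdot 3$ and $\theta(\bv)=1\cdot\theta(\bv_i)\cdot 3$ (with the outer factor $1$ or $3$ omitted in the degenerate cases $i=1$ or $i=p$).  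A direct check in $\mathbf{C}$ then verifies that in every such configuration the three candidate values for $\theta(\bu_i)\in\{4,5,0\}$ give pairwise distinct sandwich products, for example $1\cdot 4\cdot 3=10$, $1\cdot 5\cdot 3=5$, $1\cdot 0\cdot 3=0$; hence $\theta(\bu)\neq\theta(\bv)$, as required.

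The main obstacle is the $n\geq 3$ reduction inside a connected block: it requires a careful combinatorial combination of the earlier deduction lemmas to normalise every canonical form of a given content to the single form $x_1x_2\cdots x_nx_1$.  By contrast, the structural step using $A_0$ is an immediate appeal to Lemma~\ref{lem:A0}, and the multi-block lifting step is essentially automatic because $a$ and $c$ are idempotent in $\mathbf{C}$ and the three nonzero sandwich products $1\cdot e\cdot 3$ for $e\in\{4,5,0\}$ happen to be distinct.
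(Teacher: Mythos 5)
Your proof is correct and follows essentially the same route as the paper's: verify $\mathbf{C}\models xyx=yxy$, pass to canonical forms via Lemma~\ref{lem:canonical2}, match blocks using the copy of $A_0$ inside $\mathbf{C}$ together with Lemma~\ref{lem:A0}, and then separate a single differing block by an evaluation sending earlier letters to $1$, the critical letter to $2$, and later letters to $3$. The only (harmless) variations are that you dispose of the case $|\con(\bu_i)|\geq 3$ explicitly and, by observing that matched singleton blocks must be identical, you get by with the single evaluation $x\mapsto 2$, whereas the paper uses a second evaluation $x\mapsto 5$ to handle the exponent pair $\{1,4\}$.
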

\begin{proof}
As ${\bf C}$ satisfies $xyx= yxy$, to show it generates the variety defined by $xyx\feq yxy$ it suffices to show that whenever  $\bu= \bv$ is an identity that does \emph{not} follow from $xyx= yxy$, then $\bu= \bv$ fails on ${\bf C}$.  By Lemma~\ref{lem:canonical2} we may assume without loss of generality that $\bu$ and $\bv$ are in canonical form.

As $\bu$ and $\bv$ are in canonical form, we may write
\begin{align*}
\bu&\equiv \bu_1\bu_2\bu_3\cdots\bu_m\\
\bv&\equiv \bv_1\bv_2\bv_3\cdots\bv_n
\end{align*}
where each $\bu_i$ and each $\bv_i$ are connected words in canonical form and such that $\con(\bu_i)\cap\con(\bu_j)=\varnothing$ for $i<j\leq m$ and $\con(\bv_i)\cap\con(\bv_j)=\varnothing$ for $i<j\leq n$.  Now $A_0\leq{\bf C}$, so Lemma~\ref{lem:A0} shows that we may assume that $n=m$ and $\con(\bu_i)=\con(\bv_i)$  for each $i=1,\dots,n$ (otherwise we have $A_0$ failing $\bu=\bv$ and we are done).

Now, as $\bu\neq\bv$ it follows that there is some $i$ with $\bu_i\neq \bv_i$.  Because of the definition of canonical form, and the fact that $\con(\bu_i)=\con(\bv_i)$, it follows that either there is a single variable $x$ such that $\bu_i\equiv x^j$ and $\bv_i\equiv x^k$ for some $j\neq k$ (with $j,k\leq 4$), or there are variables $x,y$ with $\bu_i\in\{xyx,xyyx\}$ and $\bv_i\in\{xyx,xyyx\}\backslash\{\bu_i\}$.  The second case may be mapped to the first of these cases by considering the substitution that fixes all variables but with $y\mapsto x$ (as $xyx\mapsto x^3$, while $xyyx\mapsto x^4$).  Without loss of generality then, let us assume $\bu_i\equiv x^j$, while $\bv_i\equiv x^k$ for $j<k\leq 4$.
Consider then the evaluation $\theta_1$ into ${\bf C}$ defined by
\[
\theta_1:z\mapsto
 \begin{cases}
1&\text{ if $z\in \con(\bu_1\cdots\bu_{i-1})$}\\
2&\text{ if $z= x$}\\
3&\text{ if $z\in \con(\bu_{i+1}\cdots\bu_{n})$.}
\end{cases}
\]
Now for $j=1,4$, we have $\theta_1(\bu) =0$, but $\theta_1(\bu)=10$ if $j=2$ and $\theta_1(\bu)=5$ if $j=3$.  Thus except in the case $\{j,k\}=\{1,4\}$, the substitution $\theta_1$ shows that $\bu=\bv$ fails on ${\bf C}$.  So now assume without loss of generality that $j=1$ (so that $k>1$)
Consider then the evaluation $\theta_2$ into ${\bf C}$ defined by
\[
\theta_1:z\mapsto
 \begin{cases}
1&\text{ if $z\in \con(\bu_1\cdots\bu_{i-1})$}\\
5&\text{ if $z= x$}\\
3&\text{ if $z\in \con(\bu_{i+1}\cdots\bu_{n})$.}
\end{cases}
\]
Then $\theta_2(\bu)=5$, while  $\theta_2(\bv)=0$.  Hence we have shown that ${\bf C}$ fails $\bu=\bv$, which completes the proof that the variety generated by ${\bf C}$ is the same as that defined by $xyx\feq yxy$.
\end{proof}

\begin{remark}\label{rem:xyxyxy}
The pseudovariety $\ldbrack xyx=yxy \rdbrack$ has precisely four maximal sub-pseudovarieties.  These are defined taking the law $xyx=yxy$ in conjunction with exactly one of the following laws: $x^4y^4=y^4x^4$, $x^4y^2z^4=x^4y^3z^4$, $x^4y^2z^4=x^4y^4z^4$, and  $x^4y^3z^4=x^4y^4z^4$.
\end{remark}
\begin{proof}
We may consider an equation  $\bu=\bv$ between distinct normal forms for $xyx=yxy$:
\begin{align*}
\bu&\equiv \bu_1\bu_2\bu_3\cdots\bu_m\\
\bv&\equiv \bv_1\bv_2\bv_3\cdots\bv_n
\end{align*}
(each $\bu_i$ and $\bv_i$ either a single variable or a connected component, in pairwise distinct alphabets).  Our goal is to deduce one of the four listed equations.  

Without loss of generality we may assume that $m,n\geq 3$ and that $\bu_1\equiv \bv_1$ and $\bu_m\equiv \bv_n$.  Indeed, if $a,b$ are letters not appearing in $\bu$ and $\bv$, then $a^4\bu b^4$ and $a^4\bv b^4$ are also distinct normal forms and $\bu=\bv\vdash a^4\bu b^4=a^4\bv b^4$.

If $\bu$ and $\bv$ have different alphabets (say, $y\in\con(\bv)\backslash\con(\bu)$), then by mapping $y\mapsto y^4$ and all other letters  to $x^4$ (and then simplifying to normal form) we obtain $\{\bu=\bv,xyx=yxy\}\vdash x^4=xy^2x$.  From this we obtain $x^4y^4=xy^2xy^4=x^4y^4x^4y^4=y^4x^4y^4x^4=yx^2yx^4=y^4x^4$.  So now we assume that $\bu$ and $\bv$ have the same alphabet $X$.  

Each connected component (or letter with single occurrence) has an alphabet that is a subset of $X$, and these subsets partition $X$.  If the partition of $X$ arising from $\bu$ is distinct from that arising from $\bv$ then we may deduce the law 
$x^4y^4=x y^2x$, from which $x^4y^4=y^4x^4$ again follows.  The same applies if the arising partitions coincide, but that the connected components appear in different order.  Thus we may assume now that $n=m\geq 3$ and $\con(\bu_i)=\con(\bv_i)$ for each $i=1,\dots,n$.  Let $i$ in $\{2,\dots,n-1\}$ be such that $\bu_i\not\equiv \bv_i$.  If $\bu_i$ has just one letter, then up to a change of latter names, $\bu'=y^j$ and $\bv'=y^k$, for some distinct $j,k\leq 4$ and letter $y$.  If $j$ or $k$ is $1$ then we may deduce $x^4y^3z^4=x^4y^4z^4$.  Otherwise, we have $\{j,k\}\in\{\{2,3\},\{2,4\},\{3,4\}\}$, from which one of the equations $x^4y^2z^4=x^4y^3z^4$, $x^4y^2z^4=x^4y^4z^4$, and $x^4y^3z^4=x^4y^4z^4$ are consequences of $\bu=\bv$.

Now assume that $|\con(\bu_i)|\geq 2$.  Given that $\con(\bu_i)=\con(\bv_i)$ but $\bu_i\not\equiv\bv_i$, it follows that $\bu_i=\bv_j$ is the equation $x_1x_2x_1=x_1x_2^2x_1$ (or reverse).  Then we obtain the consequence $x_1^4yzy x_2^4=x_1^4yz^2y x_2^4$.  But $x_1^4yzy x_2^4=x_1^4yz^2y x_2^4\vdash x^4y^3z^4=x^4y^4z^4$.

Finally we note that each of the four listed equations does not, in conjunction with $xyx=yxy$, imply any of the others.  It is possible to argue this syntactically, based on analysing the consequences of fully invariant congruences of the free algebra for $xyx=yxy$ on $3$ generators.   Alternatively, one may employ Mace4 again: for each of the four equations, there are three equations with which to compare.  Mace4 provides examples, of size between $4$ and $10$-elements, witnessing independence in each of the 12 cases.
\end{proof}

\subsection{Infinitely many atoms for $\Cnt(\PV)^+$.}
\begin{lemma}\label{lem:extralength}
Let $\bw_1=\bw_2$ satisfy the conditions of Proposition~\ref{prop:133}, and let $\{y_1,\dots,y_\ell\}\cap \con(\bw_1)=\varnothing$.  
Then $y_1\dots y_i\bw_1y_{i+1}\dots y_\ell=y_1\dots y_i\bw_2y_{i+1}\dots y_\ell$ satisfies the conditions of Proposition~\ref{prop:133}.  
Moreover, if $\ldbrack \bw_1=\bw_2\rdbrack$ is compact, then so is $\ldbrack y_1\dots y_i\bw_1y_{i+1}\dots y_\ell=y_1\dots y_i\bw_2y_{i+1}\dots y_\ell\rdbrack$.
\end{lemma}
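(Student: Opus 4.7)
The first assertion is a routine verification: the two words $y_1\dots y_i\bw_1 y_{i+1}\dots y_\ell$ and $y_1\dots y_i\bw_2 y_{i+1}\dots y_\ell$ are distinct (since $\bw_1\neq\bw_2$), share the content $\{x_1,\dots,x_k\}\cup\{y_1,\dots,y_\ell\}$ of size $k+\ell>1$, and share common length $n+\ell\geq k+\ell$. Hence Proposition~\ref{prop:133} applies.

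For the compactness claim, write $V=\ldbrack\bw_1=\bw_2\rdbrack$ and let $V'$ denote the pseudovariety defined by the padded identity; let $S$ be a finite generator of $V$. Note $V\subseteq V'$ because the padded identity is a substitution-consequence of $\bw_1=\bw_2$. My plan proceeds in two stages: (a) establish that the Birkhoff variety $\mathbf{Var}(V')$ is locally finite (necessary for the pseudovariety $V'$ to be compact, by Proposition~\ref{prop:notlocallyfinite}); (b) exhibit a single finite generator for $V'$. For (a), I argue contrapositively: given an infinite finitely generated residually finite semigroup $T\in\mathbf{Var}(V')$, I would construct an infinite finitely generated residually finite semigroup in $\mathbf{Var}(\bw_1=\bw_2)$, contradicting the compactness of $V$ (which forces the latter variety to be locally finite by Proposition~\ref{prop:notlocallyfinite}). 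A natural candidate is the Rees quotient $T/I$ for an ideal $I$ chosen so that every nonzero element of $T/I$ factors through at least $i$ terms on the left and $\ell-i$ terms on the right, forcing the padded identity in $T$ to reduce, modulo $I$, to $\bw_1=\bw_2$. For (b), once (a) is known, the relatively free object $F_{V'}(k+\ell)$ is finite, and should generate $V'$ as a pseudovariety since the defining identity of $V'$ has exactly $k+\ell$ variables.

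The principal obstacle is step (a): engineering $I$ so that simultaneously (i) $T/I\models\bw_1=\bw_2$, (ii) $T/I$ remains infinite, and (iii) $T/I$ inherits residual finiteness from $T$. Aggressive collapsing (large $I$) aids (i) but risks (ii); conservative collapsing risks (i). A careful structural analysis of $T$, likely via its $\mathcal{J}$-class or length-of-factorisation hierarchy, should identify an appropriate $I$. A secondary technical point in step (b) is verifying that $F_{V'}(k+\ell)$ really generates the whole pseudovariety, which reduces to an equational-deduction argument showing that every identity of $V'$ follows from its $(k+\ell)$-variable consequences of $\bw_1'=\bw_2'$.
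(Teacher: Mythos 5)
Your handling of the first assertion is fine, but the compactness argument contains genuine gaps, the main one of which you flag yourself. Step (a) is only a plan: you never construct the ideal $I$, and a Rees quotient cannot do what you want. The padded identity only tells you that $\bw_1(\bar t)$ and $\bw_2(\bar t)$ become equal after multiplying by a \emph{common} product of $i$ elements on the left and $\ell-i$ elements on the right; knowing that nonzero elements of $T/I$ admit deep factorisations does not let you strip those factors off, so condition (i) is not forced by your mechanism. If instead you try to force it by putting into $I$ every element that is a product of sufficiently many generators, then, since $T$ is finitely generated, $T/I$ has only boundedly many nonzero elements and is finite, destroying (ii). The quotient that genuinely forces $\bw_1=\bw_2$ is the one by the translate (syntactic) congruence $a\sim b$ iff $paq=pbq$ for all products $p$ of at least $i$ and $q$ of at least $\ell-i$ elements; that is not a Rees quotient, and its infiniteness and residual finiteness are exactly what would have to be proved, with nothing offered. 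The paper sidesteps all of this with a purely syntactic observation: every consequence $\bu=\bv$ of the padded identity is either trivial or of the form $\bu\equiv\bp\bu'\bq$, $\bv\equiv\bp\bv'\bq$ with $|\bp|=i$, $|\bq|=\ell-i$ and $\bu'=\bv'$ a consequence of $\bw_1=\bw_2$; local finiteness of the padded variety then follows immediately from that of $\ldbrack\bw_1=\bw_2\rdbrack$, with no detour through residually finite infinite semigroups.

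Step (b) is also unjustified: ``the defining identity has $k+\ell$ variables, hence $F_{V'}(k+\ell)$ generates'' is not a valid principle. A (pseudo)variety defined by an identity in $n$ variables need not be generated by its $n$-generated members: $\ldbrack x^2=x^3\rdbrack$ is defined by a one-variable identity, yet all its one-generated members are commutative, so they generate a proper subpseudovariety; and in this very paper the variety of $xyx=xyy$ required the $3$-generated relatively free algebra although the identity has two variables. The correct rank is tied to the rank $m$ witnessing compactness of $\ldbrack\bw_1=\bw_2\rdbrack$, not to the variable count of the padded identity: the paper proves that $F_{m+1}$ generates, by taking any $\bu=\bv$ that does not follow from the padded identity and exhibiting a failing substitution into few letters (two letters if $\bu,\bv$ disagree inside the prefix of length $i$ or the suffix of length $\ell-i$, one letter if they differ in length, and otherwise an $m$-letter substitution killing the middle—supplied by the compactness hypothesis—extended by one further letter). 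Your proposal contains no analogue of this case analysis, and without the syntactic description of the consequences of the padded identity neither your step (a) nor your step (b) can be completed as written.
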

\begin{proof}
The first statement is trivial.  For the second, observe that if $\ldbrack \bw_1=\bw_2\rdbrack$ is compact, then for some $m\in\mathbb{N}$, it is generated by the $m$-generated relatively free semigroup in the variety defined by $\bw_1=\bw_2$.  
We claim that $\ldbrack y_1\cdots y_i\bw_1y_{i+1}\cdots y_\ell=y_1\cdots y_i\bw_2y_{i+1}\cdots y_\ell\rdbrack$ is locally finite and generated by the $m+2$-generated relatively free algebra.  Let $F_j$ denote the relatively free semigroup for $\ldbrack y_1\cdots y_i\bw_1y_{i+1}\cdots y_\ell=y_1\cdots y_i\bw_2y_{i+1}\cdots y_\ell\rdbrack$ on $j$ free generators.

Now observe that if $\bu=\bv$ is a consequence of 
\[
 y_1\cdots y_i\bw_1y_{i+1}\cdots y_\ell=y_1\cdots y_i\bw_2y_{i+1}\cdots y_\ell,
\] then either $\bu\equiv \bv$, or $\bu\equiv\bp\bu'\bq$ and $\bv\equiv \bp\bv'\bq$, for some words $\bp,\bq,\bu',\bv'$ with $|\bp|=i$ and $|\bq|=\ell-i$, and where $\bu'=\bv'$ follows from $\bw_1=\bw_2$.  
This easily yields the fact that $\ldbrack y_1\cdots y_i\bw_1y_{i+1}\cdots y_\ell=y_1\cdots y_i\bw_2y_{i+1}\cdots y_\ell\rdbrack$ is locally finite provided $\ldbrack \bw_1=\bw_2\rdbrack$ is.

Next we show that $F_{m+1}$ generates the variety.  
For this we need to show that if $\bu=\bv$ does not follow from $y_1\cdots y_i\bw_1y_{i+1}\cdots y_\ell=y_1\cdots y_i\bw_2y_{i+1}\cdots y_\ell$, then $\bu=\bv$ fails on $F_{m+1}$.

If $\bu$ differs from $\bv$ within some prefix of length at most $i$, say $\bu\equiv\bu_1x\bu_2$ and $\bv\equiv\bu_1y\bu_2$ with $|\bu_1|<i$.  Then the substitution identifying all letters in $\con(\bu\bv)\backslash\{x\}$ with $y$ yields a failure of $\bu=\bv$ in $F_{2}\leq F_{m+1}$.  The case where $\bu$ differs from $\bv$ within some suffix of length at most $\ell-i$ is dual.

Now assume that $\bu$ and $\bv$ agree on the prefix of length $i$ and the suffix of length $\ell-i$.  It's possible the prefix overlaps with the suffix.  Because $\bu\not\equiv \bv$, this implies that $|\bu|\neq |\bv|$, with at least one of the $|\bu|,|\bv|<m+\ell$.  Then identifying all variables to $x$ yields $x^{|\bu|}=x^{|\bv|}$, which fails on $F_1$.  Thus we may assume that $\bu\equiv \bp\bu'\bq$, $\bv\equiv \bp\bv'\bq$, for some words $\bp,\bq,\bu',\bv'$ with $|\bp|=i$ and $|\bq|=\ell-i$, and where $\bu'=\bv'$ does \emph{not} follow from $\bw_1=\bw_2$.  
Let $\theta$ be an assignment from $\con(\bu'\bv')$ into $\{x_1,\dots,x_m\}$ for which $\theta(\bu')=\theta(\bv')$ does not follow from $\bw_1=\bw_2$; this exists because $\bw_1=\bw_2$ is generated by its $m$-generated free algebra.  Now extend $\theta$ to the other variables by  identifying all variables outside of $\{x_1,\dots,x_m\}$ to some $x\notin \{x_1,\dots,x_m\}$.  Then $\theta(\bu)=\theta(\bv)$ fails on $F_{m+1}$.
\end{proof}
It is easy to see that for fixed $\bw_1=\bw_2$, if the number $\ell$ in Lemma~\ref{lem:extralength} is increased, one obtains a different pseudovariety.  Then by Theorems~\ref{thm:xyx_xyy} and~\ref{thm:xyx_yxy}, one obtains infinitely many compact $\smi$s by using $xyx=yxy$ or $xyy=xyx$ for $\bw_1=\bw_2$.

We conclude with some open problems.
\begin{problem}\label{prob}
\begin{enumerate}
\item Describe all compact $\smi$ semigroup pseudovarieties.
\item If $S$ is a finite semigroup whose pseudovariety can be defined by a single equation, is it true that the variety of $S$ can be defined by a single equation?
\end{enumerate}
\end{problem}
In the direction of Problem \ref{prob}(1), a reasonable starting point would be to characterise which equations satisfying the conditions in Proposition \ref{prop:133} are compact; and are there any outside of those covered by Proposition \ref{prop:133}?  This falls within a more general problem, asking which finite systems of semigroup equations determine finitely generated varieties, and whether or not this is algorithmically solvable (the so-called ``reverse Tarski problem''; see O.~Sapir \cite{osap}).  A  further interesting intermediate problem  would be to examine which varieties determined by a single equation are finitely generated.   This leads to the second part of Problem \ref{prob}, which is a bounded version of the Eilenberg-Sch\"utzenberger problem (asking if a finite generator for a finitely based pseudovariety must generate a finitely based variety; see~\cite{eilsch}).  The Eilenberg-Sch\"utzenberger problem was solved positively for semigroup pseudovarieties by Mark Sapir \cite{msap2} but remains open for general algebras.  In connection with the present setting, observe that a $\smi$ pseudovariety must be definable (amongst finite semigroups) by a single equation.  Our arguments involve syntactic analysis of equational deductions, and would require adjustment if they were to cover any examples negatively answering Problem~\ref{prob}(2).  This problem also seems interesting for general algebras.  

\section*{Acknowledgments}
We thank Mark Sapir for several useful discussions, and Edmond W.\,H. Lee for bringing the article~\cite{SLsmall} to our attention, for the observations assisting in the reduction of the size of generator for the pseudovariety $\ldbrack xyx=yxy\rdbrack$, as well as suggesting the inclusion of Remarks \ref{rem:xyxxyy} and \ref{rem:xyxyxy}.  Lee, with the assistance of Jo\~{a}o Ara\'{u}jo, also noted an error in an early draft of Remark \ref{rem:xyxxyy}.

\separator
\def\malce{\mathbin{\hbox{$\bigcirc$\rlap{\kern-7.75pt\raise0,50pt\hbox{${\tt
  m}$}}}}}\def\cprime{$'$} \def\cprime{$'$} \def\cprime{$'$} \def\cprime{$'$}
  \def\cprime{$'$} \def\cprime{$'$} \def\cprime{$'$} \def\cprime{$'$}
  \def\cprime{$'$}

\end{document}